\documentclass{amsart}
\usepackage{mathtools} % \coloneqq for the definition symbol ':='.
\usepackage{amssymb}
\usepackage[mathcal]{euscript}
\usepackage[cmtip,all]{xy}
\usepackage{amsmath}\usepackage{amsfonts}%% letters and caracters.
\usepackage{graphicx} % scalebox,...
\usepackage{verbatim} % \begin{comment} ...
\usepackage{stackrel}

%%%%%%%%%%%%%%%%%%%%%%%%%%%%%%%%%%%%%%%%%%%%%%%%%%%%%%%%% DEFINITIONS

\DeclareMathOperator{\End}{\mathrm{End}}

\DeclareMathOperator{\Alg}{\mathrm{Alg}}

\DeclareMathOperator{\Lie}{\mathrm{Lie}}
\DeclareMathOperator{\Der}{\mathrm{Der}}

\DeclareMathOperator{\Aut}{\mathrm{Aut}}
\DeclareMathOperator{\AAut}{\mathbf{Aut}}

\DeclareMathOperator{\Hom}{\mathrm{Hom}}

\DeclareMathOperator{\im}{\mathrm{im}}
\DeclareMathOperator{\Supp}{\mathrm{Supp}}
\DeclareMathOperator{\ch}{\mathrm{char}}
\DeclareMathOperator{\dg}{\mathrm{deg}}
\DeclareMathOperator{\dm}{\mathrm{dim}}

 % Octonions
\newcommand{\R}{\mathbb{R}}  % Reals
  % Complex
  % Naturals
\newcommand{\Z}{\mathbb{Z}}  % Integers
\newcommand{\F}{\mathbb{F}}  % Field

\newcommand{\cA}{\mathcal{A}} % Algebras
\newcommand{\cB}{\mathcal{B}}
\newcommand{\cH}{\mathcal{H}}

\newcommand{\cK}{\mathcal{K}}

\newcommand{\cV}{\mathcal{V}}

\newcommand{\cC}{\mathcal{C}}
\newcommand{\cS}{\mathcal{S}}

 % Lie algebras

%%%%%%%%%%%%%%%%%%%%%
 \newcommand{\FF}{\mathbb{F}} \newcommand{\ZZ}{\mathbb{Z}}  \newcommand{\OO}{\mathbb{O}}
\newcommand{\med}{\;|\;}

\DeclareMathOperator{\chr}{\mathrm{char}\,}  \DeclareMathOperator{\lspan}{\mathrm{span}} 
\DeclareMathOperator{\kan}{\mathfrak{K}}
\newcommand{\Univ}{\mathcal{U}} 

%Cayley-Dickson
%\DeclareMathOperator{-}{\mathrm{Inv}} 

%\newcommand{-}{{}^-} %involution
\newcommand{\smir}{{\textmd{\rm T}(\cC)}}
\newcommand{\smirCxC}{{\textmd{\rm T}(\cC\otimes\cC)}}
\newcommand{\TT}{\textmd{\rm T}}
\DeclareMathOperator{\tr}{\mathrm{t}} %trace
 \newcommand{\CxC}{{\cC\otimes\cC}}

%%%%%%%%%%%%%%%%%%%%%%%%%%%%%%%%%%%%%%%%%%%%%%%%%%%%%
\newtheorem{theorem}{Theorem}
\newtheorem{proposition}[theorem]{Proposition}
\newtheorem{lemma}[theorem]{Lemma}
\newtheorem{corollary}[theorem]{Corollary}

\theoremstyle{definition}
\newtheorem{df}[theorem]{Definition}
\newtheorem{example}[theorem]{Example}

\theoremstyle{remark}
\newtheorem{remark}[theorem]{Remark}

\numberwithin{equation}{section} %sets equation numbers <chapter>.<section>.<index>
\numberwithin{theorem}{section} %sets equation numbers <chapter>.<section>.<index>
%%%%%%%%%%%%%%%%%%%%%%%%%%%%%%%%%%%%%%%%%%%%%%%%%%%%%%%%%%%%%%%%

\begin{document}
%%%%%%%%%%%%%%%%%%%%%%%%%%%%%%%%%%%%%%%%%%%%%%%%%%%%%%%%%%%%%%%%%%%%%%%%%%%%%%%%%%%%%%%%%%%%%%%%%%%%

\title[Grads. on tensor prod. of comp. algs. and on the Smirnov algebra]
{Gradings on tensor products of composition algebras and on the Smirnov algebra}

\author[D. Aranda-Orna]{Diego Aranda-Orna${}^\star$}
\address{Departamento de Matem\'{a}ticas
 e Instituto Universitario de Matem\'aticas y Aplicaciones,
 Universidad de Zaragoza, 50009 Zaragoza, Spain}
\email{diego.aranda.orna@gmail.com}

\author[A.S. C\'ordova-Mart\'inez]{Alejandra S. C\'ordova-Mart\'inez${}^\star$}
\address{Departamento de Matem\'{a}ticas
 e Instituto Universitario de Matem\'aticas y Aplicaciones,
 Universidad de Zaragoza, 50009 Zaragoza, Spain}
\email{sarina.cordova@gmail.com}

\thanks{${}^\star$Supported by the Spanish Ministerio de Econom\'{\i}a y 
Competitividad---Fondo Europeo de Desarrollo Regional (FEDER) MTM2017--83506-C2-1-P.\quad  
A.S.~C\'ordova-Mart{\'\i}nez also acknowledges support from the Consejo Nacional de Ciencia y Tecnolog{\'\i}a (CONACyT, M\'exico) through grant 420842/262964.}

\date{}

\begin{abstract}
We give classifications of group gradings, up to equivalence and up to isomorphism, on the tensor product of a Cayley algebra $\cC$ and a Hurwitz algebra over a field of characteristic different from 2. We also prove that the automorphism group schemes of $\cC^{\otimes n}$ and $\cC^n$ are isomorphic.

On the other hand, we prove that the automorphism group schemes of a Smirnov algebra $\smir$ (a $35$-dimensional simple exceptional structurable algebra constructed from a Cayley algebra $\cC$) and $\cC$ are isomorphic. This is used to obtain classifications, up to equivalence and up to isomorphism, of the group gradings on Smirnov algebras.
\end{abstract}

\maketitle
%%%%%%%%%%%%%%%%%%%%%%%%%%%%%%%%%%%%%%%%%%%%%%%%%%%%%%%%%%%%%%%%%%%%%%%%%%%%%%%%%%%%%%%%%%%%

\section{Introduction}

A classification of finite-dimensional central simple structurable algebras over a field of
characteristic zero was given in 1978 in \cite[Theorem 25]{Al78}, with a missing case. Such classification was completed in 1990 (see \cite{Smi90a} and \cite{Smi92}) for a base field of characteristic different from 2, 3 and 5. Allison and Faulkner extended the definition of Structurable algebras to arbitrary rings of scalars of any characteristic \cite[\S 5]{AF93a}.
The importance of studying structurable algebras is their use in the construction of Lie algebras using, for example, a modified TKK-construction as in \cite{All79} where all the isotropic simple Lie algebras were obtained over an arbitrary field of characteristic zero.
From a $G$-grading on a central simple structurable algebra, where $G$ is a group, we can get a $G \times \Z$-grading on its corresponding central simple Lie algebra. We are interested in two cases of the classification: the tensor product of a Cayley algebra $\cC$ and a Hurwitz algebra, and the Smirnov algebra $\smir$. Note that in the case of $\cC \otimes \FF \cong \cC$, the classification of group gradings is well-known (\cite{Eld98}).

We know, by \cite{All79}, that we can obtain the central simple Lie algebras of type $F_4$, $E_6$, $E_7$ and $E_8$ through a construction related with the mentioned one from the algebras $(\cC\otimes\cB,-)$ for a Cayley algebra $\cC$ and a Hurwitz algebra $\cB$, where $-$ is the tensor product of their involutions.

\bigskip

By grading we mean group grading. We will always assume that the characteristic of the base field is different from $2$. This paper is structured as follows.
 
In Section~\ref{section.preliminaries} we recall the basic definitions and well-known results used in the rest of the paper.

In Section~\ref{section.Tensor} we first prove that the automorphism group schemes $\AAut(\cC^n)$, $\AAut (\cC^{\otimes n})$ and $\AAut (\cC^{\otimes n}, \bigotimes_{i=1}^n -)$
are isomorphic, where $\cC$ is the Cayley algebra and $-$ the standard involution. Then we give a classification of (involution preserving) gradings on the tensor product of a Cayley algebra and a Hurwitz algebra.

In Section~\ref{section.smirnov} we prove that the automorphism group schemes of a Smirnov algebra $\smir$ and its associated Cayley algebra $\cC$ are isomorphic. It is used for classifying the gradings, up to equivalence and up to isomorphism, on Smirnov algebras. 

Finally, in Section~\ref{section.induced} we show how the gradings on the structurable algebras considered in this paper can be used to induce gradings on Lie algebras via several constructions.

\section{Preliminaries} \label{section.preliminaries}
%%%%%%%%%%%%%%%%%%%%%%%%%%%%%%%%%%%%%%%%%%%%%%%%%%%%%%%%%%%%%%%%%%%%%%%%%%%%%%%%%%%%%%
%%%%%%%%%%%%%%%%%     *SUBSECTION*   GRADINGS             %%%%%%%%%%%%%%%%%%%%%%%%%%%
%%%%%%%%%%%%%%%%%%%%%%%%%%%%%%%%%%%%%%%%%%%%%%%%%%%%%%%%%%%%%%%%%%%%%%%%%%%%%%%%%%%%%%
\subsection{Gradings}\label{subsection.gradings}

\begin{df}\label{df:0.1}

A \textit{grading} by a group $G$ on an algebra $\mathcal{A}$ (not necessarily associative) over a field $\F$, or a \textit{$G$-grading} on $\mathcal{A}$, is a vector space decomposition
\[
\Gamma: \mathcal{A} = \bigoplus\limits_{g \in G} \mathcal{A}_g
\]
satisfying $\mathcal{A}_{g} \mathcal{A}_{h} \subset \mathcal{A}_{g h}$ for all $g, h \in G$. If such a decomposition is fixed we will refer to $\mathcal{A}$ as a $G$-graded algebra. The set
\[
\Supp \Gamma:= \lbrace g\in G : \cA_g \neq 0 \rbrace
\]
is called the \textit{support} of $\Gamma$. A grading is \textit{nontrivial} if the support  consists of more than one element. If $0\neq a \in \cA_g$, then we say that $a$ is \textit{homogeneous} of \textit{degree} $g$ and we write $\dg_\Gamma a=g$, or just $\dg a=g$ when the associated grading is clear. The subspace $\cA_g$ is called the \textit{homogeneous component} of degree $g$. A \textit{(vector space) grading} on a vector space $V$ is a grading on the algebra given by $V$ with the trivial product.

A subspace (resp. subalgebra) $V \subset \cA$ is said to be a \textit{graded subspace} (resp. \textit{graded subalgebra}) if  
\[
V = \bigoplus\limits_{g \in G} (\cA_g \cap V).
\]
Taking $V_g = \cA_g \cap V$, we turn $V$ into a $G$-graded vector space (resp. algebra). A \textit{graded ideal} is an ideal which is a graded subspace.
\end{df}

\begin{df}\label{df:graded.simple}
Let $\mathcal{A}$ be an algebra. If $\mathcal{A}$ is a $G$-graded algebra we say that 
$\mathcal{A}$ is $G$-\textit{graded-simple} if $\mathcal{A} \mathcal{A} \neq 0$ and the only graded ideals of $\cA$ are $\lbrace 0 \rbrace$ and $\mathcal{A}$. When it is clear which the grading group is we simply write ``graded-simple''.
\end{df}

For a grading $\Gamma$ we can consider many grading groups but, there is one distinguished grading group called \textit{universal group}, denoted by $U(\Gamma)$ (\cite[Chapter 1.2]{EKmon}).

We will now recall two natural ways to define an equivalence relation on group gradings, depending on whether the grading group plays a secondary role or not.

\begin{df} \label{df:1.14}
Let $\Gamma$ be a $G$-grading on an algebra $\cA$ and let $\Gamma'$ be an $H$-grading on an algebra $\cB$. We say that $\Gamma$ and $\Gamma'$ are \textit{equivalent} if there exist an isomorphism of algebras $\varphi: \mathcal{A} \rightarrow \mathcal{B}$ and a bijection $\alpha: \Supp \Gamma \rightarrow \Supp \Gamma'$ such that $\varphi (\mathcal{A}_g)= \mathcal{B}_{\alpha(g)}$ for all $g \in \Supp \Gamma$.
\end{df}

\begin{df}\label{df:1.15}
Let $\Gamma$ and $\Gamma'$ be two $G$-gradings on the algebras $\mathcal{A}$ and $\mathcal{B}$, respectively. We say that $\Gamma$ and $\Gamma'$ are \textit{isomorphic} if there exists an isomorphism of algebras $\varphi: \mathcal{A} \rightarrow \mathcal{B}$ such that $\varphi (\mathcal{A}_g)= \mathcal{B}_g$ for all $g \in G$. 
\end{df}

\begin{df}\label{df:1.24}
Let $\Gamma: \mathcal{A} = \bigoplus_{g\in G} \mathcal{A}_g$ and $\Gamma':\mathcal{A} = \bigoplus_{h\in H} \mathcal{A}'_h$ be two gradings. We say that $\Gamma$ is a \textit{refinement} of $\Gamma'$, or that $\Gamma'$ is a \textit{coarsening} of $\Gamma$, if for any $g \in G$ there exists $h\in H$ such that $\mathcal{A}_g \subseteq \mathcal{A}'_h$. If, for some $g \in G$, the inclusion is strict, then we say that we have a \textit{proper} refinement or coarsening. We say $\Gamma$ is \textit{fine} if it does not admit proper refinements.
\end{df}

The study of group gradings on finite-dimensional algebras is reduced to the study of fine gradings by their universal groups on such algebras (\cite[Proposition 1.25, Corollaries 1.26 and 1.27]{EKmon})

The next definition will be used in the process of obtaining gradings on the direct product of two Cayley algebras.

\begin{df}\label{df:loop} \cite[Definition 3.1.1]{ABFP08}

Let $\pi: G \rightarrow \overline{G}$ be a group epimorphism of abelian groups and let $\cA$ be an algebra. Denote $\pi (g) = \overline{g}$ for $g \in G$.
Suppose that there is a $\overline{G}$-grading $\overline{\Gamma} : \mathcal{A} = \bigoplus_{\overline{g}
\in \overline{G}} \mathcal{A}_{\overline{g}}$. 
Then
$$L_{\pi}(\mathcal{A})= \sum\limits_{g \in G} \mathcal{A}_{\overline{g}} \otimes g \hspace*{0.2cm} (\leq\mathcal{A} \otimes_{\F} \F G)$$
is a $G$-graded algebra with $L_{\pi}(\mathcal{A})_{g} = \mathcal{A}_{\overline{g}} \otimes g$ for $g \in G$. This algebra is called the \textit{loop algebra} of $\mathcal{A}$ relative to $\pi$. 

\end{df}

%%%%%%%%%%%%%%%%%%%%%%%%%%%%%%%%%%%%%%%%%%%%%%%%%%%%%%%%%%%%%%%%%%%%%%%%%
%%%%%%%%%%%%%%%%%%%      SCHEMES      %%%%%%%%%%%%%%%%%%%%%%%%%%%%%%%%%%%
%%%%%%%%%%%%%%%%%%%%%%%%%%%%%%%%%%%%%%%%%%%%%%%%%%%%%%%%%%%%%%%%%%%%%%%%%

\subsection{Schemes}\label{subsection.schemes}

The following can be found in \cite[Appendix A]{EKmon}. 

\begin{df}\label{df:scheme}
An \textit{affine group scheme} over $\F$ is a representable functor from the category $\Alg_{\F}$ of commutative associative unital algebras over a field $\F$ to the category of groups. 

Let $\textbf{G}$ and $\textbf{H}$ be affine group schemes. We say that $\textbf{H}$ is a \textit{subgroupscheme} of $\textbf{G}$ if, for any object $R$ in $\Alg_{\F}$, the group $\textbf{H}(R)$ is a subgroup of $\textbf{G}(R)$ and the injections $\textbf{H}(R) \hookrightarrow \textbf{G}(R)$ form a natural map $\textbf{H}\rightarrow \textbf{G}$.
\end{df}

Let $\mathcal{A}$ be a finite-dimensional nonassociative algebra over $\F$. The \textit{automorphism group scheme of $\cA$}, $\AAut(\mathcal{A})$, is defined by 
\[
\AAut(\mathcal{A})(R):= \Aut_R (\mathcal{A} \otimes R)
\] 
for any object $R$ in $\Alg_{\F}$.
If $G$ is an abelian group, a $G$-grading on an algebra $\mathcal{A}$ corresponds to a homomorphism of affine group schemes $G^D \longrightarrow \textbf{Aut}(\mathcal{A})$ (\cite[Proposition 1.36]{EKmon}), where $G^D$ is the Cartier dual of $G$. Therefore if $\cB$ is an algebra such that $\textbf{Aut}(\mathcal{A}) \simeq \textbf{Aut}(\mathcal{B})$, then there is a natural correspondence between $G$-gradings on $\cA$ and $G$-gradings on $\cB$.

A result we will use more than once is the following.

\begin{theorem}\label{th:A.50}\cite[Theorem A.50]{EKmon}
Let $\theta: \mathbf{G} \rightarrow \mathbf{H}$ be a morphism of affine algebraic group
schemes. Assume that $\mathbf{G}$ or $\mathbf{H}$ is smooth. Then $\theta$ is an isomorphism if and only if

1) $\theta_{\overline{\F}}: \mathbf{G}(\overline{\F}) \rightarrow \mathbf{H} (\overline{\F})$ is bijective and 

2) $d \theta: \Lie (\mathbf{G}) \rightarrow \Lie(\mathbf{H})$ is bijective.

\end{theorem}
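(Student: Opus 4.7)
The ``only if'' direction is immediate: base change and differentiation preserve isomorphisms. For the substantive ``if'' direction, the plan is to translate into commutative Hopf algebras and analyze the kernel and the scheme-theoretic image of $\theta$ separately. Writing $A=\F[\mathbf{G}]$ and $B=\F[\mathbf{H}]$, the morphism $\theta$ corresponds to a Hopf algebra map $\theta^{\ast}:B\to A$, and $\theta$ is an isomorphism exactly when $\theta^{\ast}$ is. By faithfully flat descent, one may first replace $\F$ by its algebraic closure, so I would work henceforth over $\overline{\F}$.

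For the kernel, set $\mathbf{K}=\ker\theta$, a closed subgroup scheme of $\mathbf{G}$. The injectivity half of~(1) gives $\mathbf{K}(\overline{\F})=1$, while left-exactness of $\Lie$ combined with the injectivity half of~(2) gives $\Lie(\mathbf{K})=0$. Over $\overline{\F}$, triviality of the $\overline{\F}$-points forces $\mathbf{K}$ to be local (no \'etale part), and then Nakayama applied to $\mathfrak{m}/\mathfrak{m}^{2}\cong\Lie(\mathbf{K})^{\ast}=0$ forces $\mathfrak{m}=0$, so $\mathbf{K}$ is trivial. Hence $\theta$ is a monomorphism, which for affine group schemes of finite type is equivalent to being a closed immersion; equivalently, $\theta^{\ast}$ is surjective.

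It remains to show $\theta^{\ast}$ is injective, equivalently that the scheme-theoretic image $\theta(\mathbf{G})\subseteq\mathbf{H}$ equals $\mathbf{H}$. By the previous step $\theta(\mathbf{G})\cong\mathbf{G}$, and by the surjectivity halves of~(1) and~(2) it shares both $\overline{\F}$-points and Lie algebra with $\mathbf{H}$. If $\mathbf{H}$ is smooth, $\mathbf{H}(\overline{\F})$ is Zariski-dense in $\mathbf{H}$, so the closed subscheme $\theta(\mathbf{G})$, containing all these points, must coincide with $\mathbf{H}$. If instead $\mathbf{G}$ is smooth, then so is its isomorphic copy $\theta(\mathbf{G})$, whence $\dim\theta(\mathbf{G})=\dim\Lie\theta(\mathbf{G})=\dim\Lie\mathbf{H}\geq\dim\mathbf{H}$; since $\theta(\mathbf{G})$ is a closed subgroup scheme meeting every connected component of $\mathbf{H}$ (again by the $\overline{\F}$-point condition), the dimension bound forces $\theta(\mathbf{G})=\mathbf{H}$.

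I expect this last step to be the main obstacle: without a smoothness hypothesis on at least one side, the tangent space and the $\overline{\F}$-points cannot detect possible nilpotent infinitesimal thickenings of $\mathbf{H}$, so the two conditions alone would fail to yield scheme-theoretic surjectivity. Smoothness of $\mathbf{G}$ or $\mathbf{H}$ is precisely what rules out this pathology and lets the kernel/image analysis assemble into a genuine isomorphism.
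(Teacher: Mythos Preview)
The paper does not prove this theorem at all: it is quoted verbatim from \cite[Theorem~A.50]{EKmon} as a preliminary tool, with no argument given. So there is no ``paper's own proof'' to compare against.

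That said, your sketch is essentially the standard argument and is correct in outline. One small refinement in the last paragraph: when $\mathbf{G}$ is assumed smooth, rather than arguing directly about connected components of $\mathbf{H}$, it is cleaner to observe that the chain
\[
\dim\mathbf{G}=\dim\Lie\mathbf{G}=\dim\Lie\mathbf{H}\geq\dim\mathbf{H}\geq\dim\theta(\mathbf{G})=\dim\mathbf{G}
\]
forces $\dim\mathbf{H}=\dim\Lie\mathbf{H}$, so $\mathbf{H}$ is smooth too, and you are back in the case already handled. Your version (``meeting every connected component'') implicitly needs $\mathbf{H}$ to have no infinitesimal thickening beyond $\theta(\mathbf{G})$ on each component, which is exactly what the equality $\Lie\theta(\mathbf{G})=\Lie\mathbf{H}$ together with smoothness of $\theta(\mathbf{G})$ delivers---but deducing smoothness of $\mathbf{H}$ first makes this transparent.
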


%%%%%%%%%%%%%%%%%%%%%%%%%%%%%%%%%%%%%%%%%%%%%%%%%%%%%%%%%%%%%%%%%%%%%%%%%%%%%%%%%%%%%%
%%%%%%%%%%%%%%%%%     *SUBSECTION*   STRUCTURABLE ALGEBRAS             %%%%%%%%%%%%%%%
%%%%%%%%%%%%%%%%%%%%%%%%%%%%%%%%%%%%%%%%%%%%%%%%%%%%%%%%%%%%%%%%%%%%%%%%%%%%%%%%%%%%%%

\subsection{Structurable algebras}\label{subsection.structurable}

First we recall some definitions. Let $(\cA,-)$ be a unital algebra with involution. Define
$V_{x,y} \in \End_{\FF} (\cA)$ by
\[
V_{x,y}(z)=(x \bar{y})z+(z\bar{y})x-(z\bar{x})y
\]
for any $x,y,z$ in an algebra $\cA$. Put $T_x = V_{x,1}$, for any $x \in \cA$, that is,
\[ 
T_x (z)= xz + zx - z \overline{x}
\]
for any  $x,z \in \cA$.

\begin{df}\label{df:structurable.algebra} \cite{Al78}
Let $\FF$ be a field of characteristic different from 2 and 3. Let $(\cA,-)$ be a finite-dimensional nonassociative unital algebra with involution over $\FF$ (i.e., an antiautomorphism ``--'' of period 2). We say that $(\cA,-)$ is \textit{structurable} if
\[
[T_z, V_{x,y}] = V_{T_z x,y} - V_{x, T_{\overline{z} y}}
\]
for any $x,y,z \in \cA$. We denote by
$$\cH(\cA,-) = \{x\in\cA \med \bar x = x\} \quad \text{and} \quad \cS(\cA,-) = \{x\in\cA \med \bar x = -x\}$$
the subspaces of \textit{symmetric} (or \textit{hermitian}) and \textit{skew-symmetric} (or \textit{skew}) elements of $\cA$, respectively.
It is straightforward to prove that $\cS(\cA,-)$ is a non-unital subalgebra of $(\cA,-)$ with the multiplication given by the commutator $[\cdot,\cdot]$.

\end{df}

\begin{df}\label{df:structurable_grading}
Let $G$ be a group and $(\cA,-)$ an algebra with involution. We say that $\Gamma$ is an \textit{involution preserving grading} on $(\cA,-)$ if $\Gamma$ is a $G$-grading on the algebra $\cA$ and it is closed under the involution, i.e., $\overline{\cA}_g \subseteq \cA_g$ for all $g\in G$.

Let $(\cA,-_{\cA})$ and $(\cB,-_{\cB})$ be algebras with involution. We say that a homomorphism of algebras $\varphi: \cA \rightarrow \cB$ is an \textit{involution preserving homomorphism} if it commutes with the involution, i.e., $\varphi \circ -_{\cA} = -_{\cB} \circ \varphi$. If there is no confusion, we will denote involutions by ``$-$".
\end{df}

Assume now that $\FF$ is a field of characteristic different from 2, 3 and 5. We will only consider finite-dimensional algebras. Smirnov proved in \cite[Theorem 2.1]{Smi90b} that any semisimple structurable algebra is the direct sum of simple algebras. The simple algebras are central simple over their center, and thus the description of semisimple algebras is reduced to the description of central simple algebras. 

\begin{theorem}\label{th:classification.structurable}
(\cite[Theorem 3.8]{Smi90b}, see also \cite[Theorem 11]{All79})
Any central simple structurable algebra is isomorphic to one of the following:

\noindent
(a) a Jordan algebra (with the identity involution),

\noindent
(b) an associative algebra with involution,

\noindent
(c) a $2 \times 2$ matrix algebra constructed from the Jordan algebra $\mathcal{J}$ of an admissible cubic form with basepoint and a nonzero scalar, 

\noindent
(d) an algebra with involution constructed from an hermitian form,

\noindent
(e) a tensor product $(\cC \otimes \cB ,-)$ where $\cC$ is a Cayley algebra, $\cB$ is a Hurwitz algebra and $-$ is the tensor product of the standard involutions, or a form of such a tensor product algebra,

\noindent
(f) a Kantor-Smirnov central simple algebra $\smir$ with involution constructed from an octonion algebra.
\end{theorem}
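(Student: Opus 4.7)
The plan is to reduce the classification to the structure theory of simple Lie algebras via the Kantor construction attached to an algebra with involution. To a structurable algebra $(\cA,-)$ one associates a $5$-graded Lie algebra
\[
\kan(\cA,-) = \cS(\cA,-) \oplus \cA \oplus \mathfrak{h} \oplus \cA \oplus \cS(\cA,-),
\]
where the middle piece $\mathfrak{h}$ is spanned by the operators $V_{x,y}$, with outer brackets built from the $T_z$. The structurability identity $[T_z, V_{x,y}] = V_{T_z x, y} - V_{x, T_{\bar z} y}$ is exactly what is needed for Jacobi on $\kan(\cA,-)$; and under the hypothesis $\chr \FF \ne 2, 3, 5$ centrality and simplicity of $(\cA,-)$ transfer to $\kan(\cA,-)$, so that $\kan(\cA,-)$ is a central simple Lie algebra equipped with a short $5$-grading whose $\pm 1$ components are copies of $\cA$.

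Conversely, any central simple $5$-graded Lie algebra $L = L_{-2} \oplus L_{-1} \oplus L_0 \oplus L_1 \oplus L_2$ satisfying the mild nondegeneracy conditions that characterize the image of $\kan$ admits a reconstruction of the structurable algebra on $\cA = L_1$, with multiplication and involution read off from the Lie bracket once a distinguished element of $L_{\pm 2}$ is fixed. This yields a bijection, in the central simple setting, between isomorphism classes of central simple structurable algebras and isomorphism classes of central simple Lie algebras equipped with such a $5$-grading.

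To finish, one enumerates all central simple $5$-graded Lie algebras. Over an algebraic closure, such gradings correspond by standard Dynkin diagram techniques to choices of simple roots to which weight $2$ (or $1$) is attached, and can then be listed case by case. The classical types $A_n$, $B_n$, $C_n$, $D_n$ account for cases (a)--(d), namely Jordan algebras with identity involution, associative algebras with involution, $2 \times 2$ matrix algebras constructed from admissible cubic forms, and algebras built from nondegenerate hermitian forms. The exceptional types $F_4$, $E_6$, $E_7$, $E_8$ yield the tensor products $\cC \otimes \cB$ of case (e) with $\cB$ of dimension $1, 2, 4, 8$ respectively, reproducing the rows of Freudenthal's magic square. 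A distinguished second admissible $5$-grading on $E_7$ produces the $35$-dimensional Kantor--Smirnov algebra in case (f). Galois descent from $\overline{\FF}$ to $\FF$ then yields all forms listed over arbitrary fields of characteristic different from $2$, $3$ and $5$.

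The main obstacle will be ensuring exhaustiveness and non-redundancy of this dictionary between $5$-graded simple Lie algebras and structurable algebras. Case (f) is the delicate point: the Smirnov algebra lives inside the $E_7$ family but is distinct from the magic square entry sitting there, and recognizing it inside the enumeration is precisely the step originally overlooked in \cite{Al78} and repaired in \cite{Smi90a} and \cite{Smi92}.
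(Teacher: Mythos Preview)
The paper does not contain a proof of this theorem. It is stated as a background result, with the proof delegated entirely to the cited references \cite[Theorem~3.8]{Smi90b} and \cite[Theorem~11]{All79}; immediately after the statement the paper moves on to \S\ref{subsection.Hurwitz}. So there is no ``paper's own proof'' to compare against.

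That said, a brief comment on your sketch. The broad strategy---pass to the Kantor Lie algebra $\kan(\cA,-)$, use that central simplicity of $(\cA,-)$ forces $\kan(\cA,-)$ to be central simple, and then classify the possible $5$-gradings on simple Lie algebras---is indeed the shape of Allison's original argument in \cite{All79}. However, several of your attributions are off. The correspondence between Cartan types and the structurable classes (a)--(f) is not as clean as you describe: Jordan algebras (case~(a)) already cover all types including the exceptional ones (a $3$-grading is a $5$-grading with trivial $\pm2$ parts), associative algebras with involution (case~(b)) live only in type $A$, the matrix algebras of case~(c) include the exceptional $56$-dimensional Brown algebra in type $E_7$, and the tensor products in case~(e) do not match $F_4,E_6,E_7,E_8$ in the order you list. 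More substantively, the hard part of the argument is not the enumeration of $5$-gradings over $\overline{\FF}$ but the converse direction: showing that \emph{every} central simple structurable algebra arises from one of the listed constructions. Allison's original proof in \cite{Al78} proceeded by a direct internal analysis of $(\cA,-)$ (studying the skew dimension $\dim\cS(\cA,-)$ and the algebra structure on $\cS$), not purely by Lie-theoretic classification, and still missed case~(f); Smirnov's completion in \cite{Smi90a,Smi92} required new structural arguments specific to the skew-dimension-one case. Your outline glosses over precisely this difficulty.
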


%%%%%%%%%%%%%%%%%%%%%%%%%%%%%%%%%%%%%%%%%%%%%%%%%%%%%%%%%%%%%%%%%%%%%%%%%%%%%%%%%%%%%%
%%%%%%%%%%%%%%%%%     *SUBSECTION*   HURWITZ ALGEBRAS             %%%%%%%%%%%%%%%%%%%%
%%%%%%%%%%%%%%%%%%%%%%%%%%%%%%%%%%%%%%%%%%%%%%%%%%%%%%%%%%%%%%%%%%%%%%%%%%%%%%%%%%%%%%
\subsection{Hurwitz algebras}\label{subsection.Hurwitz}

Hurwitz algebras constitute a generalization of the classical algebras of the real $\R$,
complex $\mathbb{C}$, quaternion $\mathbb{H}$ and octonion numbers $\mathbb{O}$.

\begin{df}\label{df:4.1}
A \textit{composition algebra} over a field $\FF$ is a not necessarily associative algebra $\cB$, endowed with a nondegenerate quadratic form (the \textit{norm}) $n: \cB \rightarrow \FF$ (i.e., the bilinear form $n(x,y) := n(x+y)-n(x)-n(y)$ is nondegenerate) which is multiplicative: $n(xy)=n(x)n(y)$ for all $x,y \in \cB$. The unital composition algebras are called \textit{Hurwitz algebras}.
\end{df}

Hurwitz algebras of dimension 4 and 8 are called, respectively, \textit{quaternion} and \textit{Cayley} (or \textit{octonion}) algebras.

\begin{df}\label{df:antisymmetric.elements}
The map $x \mapsto \overline{x}:= n(x,1)1 -x$ is an involution of the Hurwitz algebra $\cB$ called the \textit{standard conjugation}. We will denote $\cS(\cB,-)$ by $\cB_0$. 
\end{df}

The algebra obtained from a subalgebra $Q$ of a Hurwitz algebra through the Cayley-Dickson doubling process is denoted by $CD(Q, \alpha)$ where $0 \neq \alpha \in \FF$ (see \cite[p. 125]{EKmon}). 

Every Hurwitz algebra (recall that $\ch \FF \neq 2$) is isomorphic either to the ground field $\FF$, a quadratic algebra $CD(\FF, \alpha)$, a quaternion algebra $CD(\FF, \alpha,\beta)$ or a Cayley algebra  $CD(\FF, \alpha,\beta,\gamma)$ for $\alpha, \beta,\gamma \in \FF$ (\cite[Corollary 4.6]{EKmon}).

\textit{Suppose now that $\FF$ is algebraically closed}, then the norm $n$ is \textit{isotropic}, i.e., there exist nonzero elements of norm $0$. It is well-known that in this case there is only one Hurwitz algebra, up to isomorphism, for each possible dimension $1$, $2$, $4$ and $8$.

Up to isomorphism, the unique Cayley algebra $\cC$ is called the \textit{split Cayley algebra}. A well-known basis that is usually called a \textit{canonical basis} or \textit{good basis} of $\cC$ is $\lbrace e_1, e_2, u_1, u_2, u_3, v_1, v_2, v_3 \rbrace$, with multiplication table (see \cite[Figure~4.1]{EKmon}):

\begin{center}
\begin{tabular}{c||cc|ccc|ccc|}
 & $e_1$ & $e_2$ & $u_1$ & $u_2$ & $u_3$ & $v_1$ & $v_2$ & $v_3$ \\
 \hline\hline
 $e_1$ & $e_1$ & 0 & $u_1$ & $u_2$ & $u_3$ & 0 & 0 & 0 \\
 $e_2$ & 0 & $e_2$ & 0 & 0 & 0 & $v_1$ & $v_2$ & $v_3$ \\
 \hline
 $u_1$ & 0 & $u_1$ & 0 & $v_3$ & $-v_2$ & $-e_1$ & 0 & 0 \\
 $u_2$ & 0 & $u_2$ & $-v_3$ & 0 & $v_1$ & 0 & $-e_1$ & 0 \\
 $u_3$ & 0 & $u_3$ & $v_2$ & $-v_1$ & 0 & 0 & 0 & $-e_1$ \\
 \hline
 $v_1$ & $v_1$ & 0 & $-e_2$ & 0 & 0 & 0 & $u_3$ & $-u_2$ \\
 $v_2$ & $v_2$ & 0 & 0 & $-e_2$ & 0 & $-u_3$ & 0 & $u_1$ \\
 $v_3$ & $v_3$ & 0 & 0 & 0 & $-e_2$ & $u_2$ & $-u_1$ & 0 \\
 \hline
\end{tabular}
\end{center}

The subalgebra $\cK=\FF e_1 +\FF e_2$ (resp. $\cH = \FF e_1 +\FF e_2 + \FF u_1 +\FF v_1$) is, up to isomorphism, the unique Hurwitz algebra in dimension 2 (resp. 4). $\cK$ is called the \textit{split quadratic algebra} and $\cH$ the \textit{split quaternion algebra} (see \cite[Theorem 4.8]{EKmon}). 

\textit{The grading groups will be considered to be abelian, unless otherwise stated}.  This is a choice of the authors motivated by the fact that when dealing with group gradings on Hurwitz algebras, it is enough to restrict ourselves to gradings by abelian groups (see \cite[Proposition 4.10]{EKmon}). 
The term \textit{grading} will refer to grading by abelian group and the term \textit{universal group} will refer to abelian universal group (see \cite[Remark 1.22]{EKmon}).

The following two gradings are the only fine gradings, up to equivalence, on the Cayley algebra $\cC$ (see \cite[Corollary 4.14]{EKmon}):

\begin{itemize}

\item The $\Z^2$-grading with homogeneous components given by (considering the canonical basis)
\begin{equation}\label{eq:cartan.grading}
\begin{array}{ll}
\cC_{(0,0)}= \FF e_1 \oplus \FF e_2, & \\
\cC_{(1,0)}= \FF u_1, & \cC_{(-1,0)}= \FF v_1,\\
\cC_{(0,1)}= \FF u_2, & \cC_{(0,-1)}= \FF v_2,\\
\cC_{(1,1)}= \FF v_3, & \cC_{(-1,-1)}= \FF u_3.
\end{array}
\end{equation}
This grading is called the \textit{Cartan grading} and its universal group is $\Z^2$. We denote this grading by \textbf{$\Gamma^1_{\cC}$}.

\item The $(\Z/2)^3$-\textit{grading induced by the Cayley-Dickson doubling process} with homogeneous components given by (considering a basis associated to the Cayley-Dickson doubling process $\lbrace 1,w,v,vw,u,uw,vu,(wv)u\rbrace$)
\begin{equation}\label{eq:doubling.process.grading}
\begin{array}{ll}
\cC_{(\bar{0},\bar{0},\bar{0})}=\FF 1, & \cC_{(\bar{1},\bar{1},\bar{0})}=\FF uv,\\
\cC_{(\bar{1},\bar{0},\bar{0})}=\FF u, & \cC_{(\bar{1},\bar{0},\bar{1})}=\FF uw,\\
\cC_{(\bar{0},\bar{1},\bar{0})}=\FF v, & \cC_{(\bar{0},\bar{1},\bar{1})}=\FF vw,\\
\cC_{(\bar{0},\bar{0},\bar{1})}=\FF w, & \cC_{(\bar{1},\bar{1},\bar{1})}=\FF (uv)w.
\end{array}
\end{equation}
The universal group of this grading is $(\Z/2)^3$. (In terms of the good basis, we can take, for instance, $u=e_1-e_2$, $v=u_1+v_1$, $w=u_2+v_2$.) We denote this grading by \textbf{$\Gamma^2_{\cC}$}.
\end{itemize}

\begin{remark}\label{re:fine_gradings_on_dim=2,4}
The fine gradings, up to equivalence, on a quaternion algebra $\cH$ are the following (see \cite[Remark 4.16]{EKmon}):
\begin{itemize}
\item The Cartan grading over its universal group $\Z$. In this case $\cH$ has a basis $\lbrace e_1,e_2,u_1,v_1 \rbrace$ with multiplication table
\begin{center}
\begin{tabular}{ c | c c c c}
 & $e_1$ & $e_2$ & $u_1$ & $v_1$ \\
\hline
$e_1$ & $e_1$ & $0$ & $u_1$ & $0$ \\
$e_2$ & $0$ & $e_2$ & $0$ & $v_1$ \\
$u_1$ & $0$ & $u_1$ & $0$ & $-e_1$ \\
$v_1$ & $v_1$ & $0$ & $-e_2$ & $0$
\end{tabular}
\end{center}
and the homogeneous components are given by
\begin{equation}\label{eq:cartan.grading.quaternions}
\cH_0= \FF e_1 \oplus \FF e_2, \quad \cH_1= \FF u_1, \quad \cH_{-1}= \FF v_1.
\end{equation}

\item The $(\Z/2)^2$-grading induced by the Cayley-Dickson doubling process. Considering a basis associated to the Cayley-Dickson doubling process $\lbrace 1, u, v, uv \rbrace$, the homogeneous components are given by 
\begin{equation}\label{eq:C-D.grading.quaternions}
\cH_{(\bar{0},\bar{0})}= \FF 1, \quad \cH_{(\bar{1},\bar{0})}=\FF u, \quad \cH_{(\bar{0},\bar{1})}= \FF v, \quad \cH_{(\bar{1},\bar{1})}= \FF uv.
\end{equation}
(In terms of the good basis, we can take, for instance, $u=e_1-e_2$, $v=u_1+v_1$.)
\end{itemize}
And the only nontrivial grading on a Hurwitz algebra $\cK$ of dimension 2, up to equivalence, is the one induced by the Cayley-Dickson doubling process by $\Z/2$. The homogeneous components are given by (considering a basis associated to the Cayley-Dickson doubling process $\lbrace 1,u\rbrace$)
\begin{equation}\label{eq:C-D.grading.dim2}
\cK_{\bar{0}}=\FF 1, \quad \cK_{\bar{1}}=\FF u.
\end{equation}
\end{remark}

\begin{remark}\label{re:C_0}
Let $\cC$ be the Cayley algebra.
\begin{enumerate} 
\item Consider a basis $\lbrace 1,u,v,w,uv,uw,vw,(uv)w\rbrace$ of $\cC$ given by the Cayley-Dickson doubling process. We have that $\lbrace u,v,w,uv,uw,vw,(uv)w\rbrace$ is a basis for the subspace $\cC_0$.

\item Consider the good basis $\{ e_1,e_2,u_1,u_2,u_3,v_1,$ $v_2,v_3 \}$. We have that $\{ e_1-e_2,u_1,u_2,u_3, v_1,v_2,v_3 \}$ is a basis for the subspace $\cC_0$.
\end{enumerate}
$\cC_0$ is an algebra with the multiplication given by the commutator.
Observe that the subspace of skew elements generates the whole Cayley algebra if we consider the usual multiplication, therefore it is enough to know the degree of the homogeneous elements of $\cC_0$ to determine the grading on $\cC$. It is easy to see that $\dg 1=e$ where $e$ is the neutral element of the group.
\end{remark}

%%%%%%%%%%%%%%%%%%%%%%%%%%%%%%%%%%%%%%%%

\subsection{The Smirnov algebra}

We will refer to the algebras in class (f) of Theorem~\ref{th:classification.structurable} as Smirnov algebras.

Smirnov algebras are $35$-dimensional simple exceptional structurable algebras. It is well-known (\cite{Smi90a}) that its derivation algebra is a simple Lie algebra of type $G_2$, and its Kantor construction is a simple Lie algebra of type $E_7$. We will recall now the definition of the Smirnov algebra.

\smallskip

In this construction, we always assume that $\cC$ is a Cayley algebra over a field $\FF$ of characteristic different from $2$, with norm $n$ and product $\cdot$\hspace*{0.05cm}; the bilinear form associated to the norm will be denoted by $n$ too. We recall now the construction of the Smirnov algebra (see \cite{Smi90a} for more details). Denote by $\cS$ the $7$-dimensional subspace of skew-symmetric elements of $\cC$ and let $[\cdot,\cdot]$ be the commutator in $\cC$. Then, $(\cS, [\cdot,\cdot])$ is a central non-Lie Malcev algebra, which is denoted by $\cS^{(-)}$. It is well-known that there is a nondegenerate symmetric bilinear form $f$ on $\cS$ satisfying
\begin{align}
& [[x,y],y] = f(y,y)x - f(x,y)y, \label{eqf} \\
& f(x, [z,y]) = f([x,z], y),
\end{align}
for any $x,y,z \in \cS$. A straightforward computation shows that
\begin{equation} \label{eqfn}
f(x,y) = -2 n(x,y)
\end{equation}
for any $x,y\in\cS$. (Although the product of the Smirnov algebra was defined in \cite{Smi90a} using the form $f$, we will use $n$ instead, as in \cite{AF93b}.) Let $M$ denote the subpace of $\cS \otimes \cS$ generated by the set $\{ s_1 \otimes s_2 - s_2 \otimes s_1 \med s_1,s_2\in\cS\}$, and set $\cH = (\cS \otimes \cS)/M$. We write $s_1 \times s_2 = s_1 \otimes s_2 + M$ for $s_1,s_2\in\cS$. On $\cH \oplus \cS$ we define a commutative product $\odot$ and an anticommutative product $[\cdot,\cdot]$ given by
\begin{equation} \label{product} \begin{aligned}
& s_1 \odot s_2 = s_1 \times s_2, \\
& s \odot (s_1 \times s_2) = -n(s_1,s_2)s - \frac{1}{2} \Big( n(s,s_1)s_2 + n(s,s_2)s_1 \Big), \\
& (s_1 \times s_2) \odot (s_3 \times s_4) = \frac{1}{4} \Big( [s_1,s_3]\times[s_2,s_4] + [s_1,s_4]\times[s_2,s_3] \Big) - \\
& \qquad\qquad\qquad\qquad -n(s_1,s_2) s_3 \times s_4 -n(s_3,s_4) s_1 \times s_2 \\
& [s_1,s_2] = [s_1,s_2], \\
& [s, s_1\times s_2] = [s,s_1] \times s_2 + s_1 \times [s,s_2], \\
& [s_1 \times s_2, s_3 \times s_4] = -\frac{1}{2} \Big( n(s_1,s_3)[s_2,s_4] + n(s_1,s_4)[s_2,s_3] + \\
& \qquad\qquad\qquad\qquad + n(s_2,s_3)[s_1,s_4] + n(s_2,s_4)[s_1,s_3] \Big),
\end{aligned} \end{equation}
for any $s, s_i\in\cS$ and where the brackets $[\cdot,\cdot]$ on the right side of the equalities denote the product of $\cS^{(-)}$. 
(Note that the third equation in \eqref{product} has a coefficient wrong in \cite{Smi90a}, which is corrected in \cite{Smi92}.)
Then, the vector space $\smir \coloneqq \cH \oplus \cS$ with the new product
\begin{equation}
xy = x \odot y + \frac{1}{2}[x,y]
\end{equation}
and the involution given by $h+s \mapsto h-s$, for $h\in\cH$ and $s\in\cS$, define a $35$-dimensional simple structurable algebra that is called the \emph{Smirnov algebra} (or \emph{Kantor-Smirnov algebra}) associated to $\cC$.
It is easy to see that $\cH$ and $\cS$ are the subspaces of symmetric and skew-symmetric elements, respectively, and we have $x \odot y = \frac{1}{2}(xy+yx)$ and $[x,y] = xy-yx$.

\bigskip

In \cite{AF93b}, Allison and Faulkner proved that $\smir$ is isomorphic to a subalgebra of the structurable algebra $\CxC$, and gave a construction of $\smir$ different from Smirnov's construction. This second construction, which we will denote by $\smirCxC$ to avoid confusion, is given by
\begin{equation}
\smirCxC := \lspan\{ a \otimes a - n(a)1 \otimes 1 \med a\in\cC\},
\end{equation}
where the involution is the restriction of the involution of $\CxC$, and
\begin{equation}\begin{aligned}
\cS(\smirCxC) &= \{s \otimes 1 + 1 \otimes s \med s\in\cS \}, \\
\cH(\smirCxC) &= \{s \otimes s - n(s) 1 \otimes 1 \med s\in\cS \}.
\end{aligned}\end{equation}
It is known that if $\{e_i\}_{i=1}^7$ is an orthogonal basis of $\cS$ with respect to $f$ (or $n$) and $f(e_i,e_i)=\alpha_i$ for $i=1,\dots, 7$, then the identity of $\smirCxC$ can be written as $1_\smirCxC=\sum_{i=1}^7\frac{1}{4 \alpha_i} e_i \times e_i$. In other words, if $\{x_i\}_{i=1}^7$ is an orthogonal basis of $\cS$ with respect to $n$ and $n(x_i) = \alpha_i$ for $i=1,\dots, 7$, then $1_\smirCxC = \sum_{i=1}^7\frac{-1}{16 \alpha_i} x_i \times x_i$.

\smallskip

An isomorphism $\psi\colon \smir \to \smirCxC$ between the two constructions (see \cite[Proof of Prop.~1.9]{AF93b}) is given by
\begin{equation}\label{isomSmirnovConstructions}
\psi(s) = s \otimes 1 + 1 \otimes s, \quad \psi(s \times s) = 2(s \otimes s - n(s) 1 \otimes 1)
\end{equation}
for $s\in\cS$.

\smallskip

\begin{df}
The linear form determined by
\begin{align*}
\tr \colon \smir & \longrightarrow \FF \\
(s_1 \times s_2) + s & \longmapsto -8 n(s_1,s_2)
\end{align*}
for $s_1,s_2,s\in\cS$ will be called the \emph{(linear) trace} of the Smirnov algebra $\smir$. We also denote by $\tr$ the associated bilinear form $\tr \colon \smir \times \smir \to \FF$ given by $\tr(x, y) \coloneqq \tr(x \bar y)$ for $x,y \in \smir$, that will be called the \emph{(bilinear) trace} of $\smir$. By abuse of notation and when there is no confusion, we will refer to any of these trace forms as the \emph{trace} of $\smir$. Since $\tr(x) = \tr(x, 1)$ for $x,y\in\smir$, each trace form determines the other one. Also, note that $\tr(1) = 7$ coincides with the degree of the norm of $\smir$.
\end{df}

%%%%%%%%%%%%%%%%%%%%%%%%%%%%%%%%%%%%%%%%%%%%%%%%%%%%%%%%%%%%%%%%%%%%%%%%%%%%%%%%%%%%%%
%%%%%%%%%%%      *SECTION*   TENSOR PRODUCT OF COMPOSITION ALGEBRAS        %%%%%%%%%%%
%%%%%%%%%%%%%%%%%%%%%%%%%%%%%%%%%%%%%%%%%%%%%%%%%%%%%%%%%%%%%%%%%%%%%%%%%%%%%%%%%%%%%%
\section{Tensor product of composition algebras}\label{section.Tensor}

In this section we study involution preserving gradings on $\cC \otimes \cB$ where $\cC$ is a Cayley algebra and $\cB$ is a Hurwitz algebra. We start by proving in Section \ref{subsection.Hurwitz.Automorphism.scheme.Cayley} that $\AAut (\cC^1 \otimes \cdots \otimes \cC^n, -\otimes \cdots \otimes -) \simeq \AAut(\cC^1 \times \cdots \times \cC^n)$ for Cayley algebras $\cC^1,...,\cC^n$. This shows that there is a correspondence between gradings on $\cC^1 \times \cC^2$ and involution preserving gradings on $\cC^1 \otimes \cC^2$.
In Section \ref{subsection.Tensor.gradings.on.tensor.of.Hurwitz} we give the classification of involution preserving gradings, up to equivalence and isomorphism, on the tensor product of a Cayley algebra and a Hurwitz algebra of dimension 2 and 4. 
In Section \ref{subsection.Tensor.direct.product} we give involution preserving gradings, up to equivalence and isomorphism, on $\cC^1 \times \cC^2$ and finally in Section \ref{subsection.Tensor.tensor.product} we give involution preserving gradings, up to equivalence and isomorphism, on $\cC^1 \otimes \cC^2$. (Recall that, as before, all grading groups considered are assumed to be abelian.)
As we mentioned before the tensor product of a Cayley algebra $(\cC,-)$ and the field $\FF$ is isomorphic (as algebras with involution) to $(\cC,-)$ and since gradings on Cayley algebras are already known (see Section \ref{subsection.Hurwitz}), we omit this case.

%%%%

\subsection[Tensor product of Cayley algebras]{Automorphism scheme of the tensor product of Cayley algebras}\label{subsection.Hurwitz.Automorphism.scheme.Cayley}

In this section we use definitions and results from \cite{MPP01} to prove that
\[
\AAut (\cC^1 \otimes \cdots \otimes \cC^n) \simeq \AAut(\cC^1 \times \cdots \times \cC^n)
\]
where $\cC^i$ are Cayley algebras for $i=1,...,n$. This reduces the problem of classifying gradings on $\cC^1 \otimes \cdots \otimes \cC^n$ to classify gradings on $\cC^1 \times \cdots \times \cC^n$. 

\begin{df}\label{df:3.1.mpp}\cite[Definition 3.1]{MPP01}
The \textit{generalized alternative nucleus} of an algebra $\cA$ is defined by
\[
N_{alt} (\cA) := \lbrace a \in \cA : (a,x,y)= - (x,a,y)= (x,y,a) \hspace{0.2cm} \forall x,y \in \cA \rbrace,
\]
where $(a,x,y):= (ax)y-a(xy)$ for all $a,x,y \in \cA$.

\end{df}

\begin{remark}\label{re:deriv.algebra.N}
Let $\cC^i$ be the Cayley algebra for $i=1,...,n$. Recall that $\cC^i_0$ is the subspace of skew elements of $\cC^i$. Identify $\cC^i$ with $1\otimes\cdots \otimes\cC^i\otimes\cdots \otimes 1$ for $i=1,...,n$. We find in \cite{MPP01} that 
\[
N_{alt} (\cC^1 \otimes \cdots \otimes \cC^n) = \FF 1 \oplus \cC^1_0 \oplus \cdots \oplus \cC^n_0 = \cC^1 +\cdots+ \cC^n.
\]
And the \textit{derived algebra} of $N_{alt}(\cC^1 \otimes \cdots \otimes \cC^n)$ is
\[
N'_{alt}(\cC^1 \otimes \cdots \otimes \cC^n) = [N_{alt}(\cC^1 \otimes \cdots \otimes \cC^n), N_{alt}(\cC^1 \otimes \cdots \otimes \cC^n)]= \cC^1_0 \oplus \cdots \oplus \cC^n_0.
\]
\end{remark}

\begin{remark}\label{re:deriv.lie}
Let $\mathbf{G}$ be an affine algebraic group scheme and let $\mathcal{A}$ be an algebra. In \cite[p. 316 and 313]{EKmon} we find the following statements:

\begin{enumerate}
\item[i)] $\dm \Lie (\mathbf{G})\geq \dm \mathbf{G} = \dm \mathbf{G}(\overline{\FF})$.

\item[ii)] $\Lie (\AAut (\mathcal{A}))= \Der (\mathcal{A})$.

\item[iii)] $\AAut (\mathcal{A})$ is smooth if and only if $\dm \Der (\mathcal{A})= \dm \Aut_{\overline{\F}}
(\mathcal{A} \otimes \overline{\F})$.
\end{enumerate}
\end{remark}

From now on we will use the identification $\cA_1 \times \cdots \times \cA_n \simeq \cA_1 \oplus \cdots \oplus \cA_n $.

\begin{remark}\label{re:smooth.subscheme}
Let $\cC,\cC^1,...,\cC^n$ be Cayley algebras.
\begin{enumerate}
\item[i)] $\AAut (\cC)$ is smooth (\cite[p. 146]{EKmon}).

\item[ii)] By \cite[Proposition 3.6]{MPP01} we have that the restriction map $\AAut (\cC) \rightarrow \AAut (\cC_0)$ satisfies conditions 1) and 2) of Theorem \ref{th:A.50} and by i) we have that $\AAut (\cC) \simeq \AAut (\cC_0)$.

\item[iii)] We claim that $\AAut (\cC^1_0) \times \cdots \times \AAut (\cC^n_0)$ is subgroupscheme of $\AAut (\cC^1_0 \times \cdots \times \cC^n_0)$. Let $R$ be an object in $\Alg_{\FF}$. Consider the isomorphism
\[
\begin{array}{llll}
h: & (\cC^1_0 \times \cdots \times \cC^n_0) \otimes R &
\longrightarrow & (\cC^1_0 \otimes R) \times \cdots \times (\cC^n_0 \otimes R)\\
&(c_1,...,c_n)\otimes r & \longmapsto & (c_1\otimes r, ..., c_n\otimes r)
\end{array}
\]
for $c_i \in \cC^i_0$ and $r \in R$, $i=1,...,n$, and the canonical inclusion
\[
\begin{array}{ccc}
\iota : \Aut_R(\cC^1_0 \otimes R) \times \cdots \times \Aut_R(\cC^n_0 \otimes R) &
\longrightarrow & \Aut_R((\cC^1_0 \otimes R) \times \cdots \times (\cC^n_0 \otimes R)) \\
(f_1,\dots,f_n) & \longmapsto & f_1\times\cdots\times f_n,
\end{array}
\]
for $f_i \in \Aut_R (\cC^i_0 \otimes R)$. Then, for each $R$ we can define a monomorphism
\[
\begin{array}{ccc}
\Aut_R(\cC^1_0 \otimes R) \times \cdots \times \Aut_R(\cC^n_0 \otimes R) & 
\longrightarrow & \Aut_R((\cC^1_0 \times \cdots \times \cC^n_0) \otimes R)\\
(f_1,\dots,f_n) & \longmapsto & h^{-1}\circ \iota(f_1,\dots,f_n) \circ h
\end{array}
\]
and these behave well with morphisms, which proves the claim.
\item[iv)] Let $\mathbf{G}$ and $\mathbf{H}$ be affine group schemes. We can define $\mathbf{G} \times \mathbf{H}$ whose representing object is $\FF[\mathbf{G}]\otimes\FF[\mathbf{H}]$ (see \cite[p. 300]{EKmon}). As a consequence of Noether's normalization Lemma (\cite[Chapter 8, Section 13]{Jac85}) we get $\dm(\mathbf{G} \times \mathbf{H})= \dm \mathbf{G}+\dm \mathbf{H}$. 
\end{enumerate}
\end{remark}

\begin{lemma}\label{le:grad_involution_tensor_Cayley}
Let $\cC^1,...,\cC^n$ be Cayley algebras and let $\sigma= -\otimes \cdots \otimes -$ be the involution of $\cC^1\otimes \cdots \otimes\cC^n$, i.e., the tensor product of the involutions of each $\cC^i$ for $i=1,...,n$. Then
\[
\AAut(\cC^1\otimes \cdots \otimes\cC^n)= \AAut(\cC^1\otimes \cdots \otimes\cC^n, \sigma)
\]
where, $\AAut(\cC^1\otimes \cdots \otimes\cC^n, \sigma)(R)= \{ \varphi \in \Aut_{R-\mathrm{alg}} (\cC^1\otimes \cdots \otimes\cC^n \otimes R): \varphi \circ (\sigma \otimes id_R) = (\sigma \otimes id_R)\circ \varphi \}$ for $R\in \Alg_{\FF}$ (see Definition \ref{df:structurable_grading}).
\end{lemma}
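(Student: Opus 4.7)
The plan is to verify the nontrivial inclusion $\AAut(\cC^1\otimes\cdots\otimes\cC^n) \subseteq \AAut(\cC^1\otimes\cdots\otimes\cC^n,\sigma)$: for every $R\in\Alg_\FF$, every $R$-algebra automorphism $\varphi$ of $\cA_R := (\cC^1\otimes\cdots\otimes\cC^n)\otimes R$ commutes with $\sigma' := \sigma\otimes\id_R$. Setting $\cA := \cC^1\otimes\cdots\otimes\cC^n$, the central ingredient is Remark~\ref{re:deriv.algebra.N}, which records from \cite{MPP01} that $N_{alt}(\cA) = \FF 1 \oplus \cC^1_0 \oplus\cdots\oplus\cC^n_0$ and $N'_{alt}(\cA) = \cC^1_0\oplus\cdots\oplus\cC^n_0$.

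First I would observe that $N_{alt}$ is cut out of $\cA$ by identities which, once $x,y$ are allowed to range over a fixed basis, become a finite family of $\FF$-linear equations on the tested element; since $\FF$ is a field every $\FF$-module is flat, so base change yields $N_{alt}(\cA_R) = N_{alt}(\cA)\otimes R$ and, by $\FF$-bilinearity of the commutator, $N'_{alt}(\cA_R) = N'_{alt}(\cA)\otimes R$. Any $R$-algebra automorphism $\varphi$ of $\cA_R$ necessarily preserves $N'_{alt}(\cA_R)$. On the other hand $\sigma'$ acts as $+\id$ on $R\cdot 1$ and as $-\id$ on $N'_{alt}(\cA_R)$, because the standard conjugation restricts to $-\id$ on each $\cC^i_0$. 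Hence $\varphi$ and $\sigma'$ commute on both summands of $N_{alt}(\cA_R) = R\cdot 1 \oplus N'_{alt}(\cA_R)$.

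To propagate this commutation to all of $\cA_R$ it suffices to check that $N'_{alt}(\cA)$ generates $\cA$ as an $\FF$-algebra. Each factor $\cC^i$ is generated by $\cC^i_0$: choosing $x\in\cC^i_0$ with $n(x)\ne 0$ (possible by nondegeneracy of the norm) gives $x^2 = -n(x)\cdot 1$, so the identity is recovered; multiplying elements of different $\cC^i_0$'s sitting in different tensor slots then produces every pure tensor in $\cA$. Finally, if $\varphi\sigma' = \sigma'\varphi$ holds on generators $x,y$, the identity propagates to $xy$ via
\begin{align*}
\varphi\sigma'(xy) &= \varphi(\sigma'(y)\sigma'(x)) = \varphi\sigma'(y)\,\varphi\sigma'(x) \\
 &= \sigma'\varphi(y)\,\sigma'\varphi(x) = \sigma'(\varphi(x)\varphi(y)) = \sigma'\varphi(xy),
\end{align*}
using that $\varphi$ is an algebra homomorphism and $\sigma'$ an antihomomorphism. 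The only genuinely technical point is the flat-base-change identity $N_{alt}(\cA_R) = N_{alt}(\cA)\otimes R$; once that is in hand, the rest of the argument is elementary and no appeal to smoothness or to Theorem~\ref{th:A.50} is needed.
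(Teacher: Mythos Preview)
Your proof is correct and follows essentially the same route as the paper: show that any $R$-automorphism preserves $N'_{alt}(\cA_R)=(\cC^1_0\oplus\cdots\oplus\cC^n_0)\otimes R$, observe that $\sigma\otimes\id_R$ acts as $-\id$ there, and use that this subspace generates the whole algebra to propagate the commutation. You are more explicit than the paper on two points---the flat-base-change identification of $N_{alt}(\cA_R)$ and the anti-homomorphism computation showing propagation to products---but these are exactly the steps the paper takes for granted.
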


\begin{proof}
Let $R$ be an arbitrary object in $\Alg_{\FF}$. We will prove that
\[
\Aut_{R-\mathrm{alg}}(\cC^1\otimes \cdots \otimes\cC^n\otimes R) = \Aut_{R-\mathrm{alg}}(\cC^1\otimes \cdots \otimes\cC^n\otimes R, \sigma).
\]
$\supseteq$ is trivial. By Remark \ref{re:deriv.algebra.N}
$N_{alt}(\cC^1\otimes \cdots \otimes\cC^n\otimes R)= (\cC^1+\cdots+\cC^n)\otimes R.$

 Then
\[
\begin{array}{l}
[N_{alt}(\cC^1\otimes \cdots \otimes\cC^n\otimes R), N_{alt}(\cC^1\otimes \cdots \otimes\cC^n\otimes R)] \\
\hspace*{1cm}= [(\cC^1+\cdots+\cC^n)\otimes R, (\cC^1+\cdots+\cC^n)\otimes R] \\
\hspace*{1cm}= (\cC^1_0\oplus \cdots\oplus\cC^n_0)\otimes R,
\end{array}
\]
which generates $\cC^1\otimes \cdots \otimes\cC^n\otimes R$ (see Remark \ref{re:C_0}). Consider $\varphi\in \Aut_{R-\mathrm{alg}}(\cC^1\otimes \cdots \otimes\cC^n\otimes R)$. Notice that $[N_{alt}(\cC^1\otimes \cdots \otimes\cC^n\otimes R), N_{alt}(\cC^1\otimes \cdots \otimes\cC^n\otimes R)]$ is invariant under $\varphi$. For $x_i\in \cC^i_0$ and $r_i\in R$, $i=1,...,n$ we have
\[
\begin{array}{l}
\sigma\otimes id_R (x_1\otimes 1\otimes\cdots\otimes r_1 +\cdots+ 1\otimes\cdots\otimes x_n\otimes r_n)\\
\hspace*{1.1cm} = \bar{x}_1\otimes 1\otimes\cdots\otimes r_1 +\cdots+ 1\otimes\cdots\otimes \bar{x}_n\otimes r_n\\
\hspace*{1.1cm}=-(x_1\otimes 1\otimes\cdots\otimes r_1 +\cdots+ 1\otimes\cdots\otimes x_n\otimes r_n),
\end{array}
\]
then $\sigma\otimes id_R = -id_{(\cC^1_0\oplus\cdots\oplus\cC^n_0)\otimes R}$ in $(\cC^1_0\oplus\cdots\oplus\cC^n_0)\otimes R$. Hence
\[
\varphi \circ (\sigma\otimes id_R) = (\sigma\otimes id_R)\circ \varphi
\]
in $(\cC^1_0\oplus\cdots\oplus\cC^n_0)\otimes R$. Therefore $\varphi \circ (\sigma\otimes id_R) = (\sigma\otimes id_R)\circ \varphi$ in $\cC^1\otimes \cdots \otimes\cC^n\otimes R$, so $\varphi\in \Aut_{R-\mathrm{alg}}(\cC^1\otimes \cdots \otimes\cC^n\otimes R,\sigma)$.

\end{proof}

Using the above results, we have the following:

\begin{theorem}\label{th:iso.aut.schemes.cayley}
Let $\cC^i$ be the Cayley algebra for $i=1,...,n$.
Then there exist isomorphisms of schemes $\Phi$ and $\varphi$
\[
\AAut (\cC^1 \otimes \cdots \otimes \cC^n) \stackrel{\Phi}{\rightarrow}
\AAut (\cC^1_0 \times \cdots \times \cC^n_0) \stackrel{\varphi}{\leftarrow}
\AAut (\cC^1 \times \cdots \times \cC^n)
\]
where $\Phi(R)(f) = f|_{N'_{alt}(\cC^1 \otimes \cdots \otimes \cC^n)}$ and $\varphi(R)(g)=
g |_{[\cC^1 \times \cdots \times \cC^n, \cC^1 \times \cdots \times \cC^n]}$ for
$f \in \AAut (\cC^1 \otimes \cdots \otimes \cC^n)(R)$, $g \in \AAut (\cC^1 \times \cdots \times \cC^n)(R)$ and an object $R$ in $\Alg_{\FF}$.

Moreover,
\[
\AAut(\cC^1\otimes \cdots \otimes\cC^n)= \AAut(\cC^1\otimes \cdots \otimes\cC^n, \sigma),
\]
where $\sigma$ is the involution of $\cC^1\otimes \cdots \otimes\cC^n$.

\end{theorem}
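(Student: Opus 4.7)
The final assertion $\AAut(\cC^1\otimes\cdots\otimes\cC^n) = \AAut(\cC^1\otimes\cdots\otimes\cC^n,\sigma)$ is precisely Lemma \ref{le:grad_involution_tensor_Cayley}, so no extra argument is needed for it. For the two scheme isomorphisms $\Phi$ and $\varphi$, my plan is to apply Theorem \ref{th:A.50} to each of the restriction morphisms, having first checked that they are well-defined morphisms of affine group schemes.

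For $\Phi$, I would use that the generalized alternative nucleus (Definition \ref{df:3.1.mpp}) is defined by algebra identities, so its formation commutes with arbitrary base change. Combined with Remark \ref{re:deriv.algebra.N}, this yields
\[
N'_{alt}\bigl((\cC^1 \otimes \cdots \otimes \cC^n) \otimes R\bigr) = (\cC^1_0 \oplus \cdots \oplus \cC^n_0) \otimes R
\]
for every $R \in \Alg_{\FF}$, a subspace invariant under every $R$-linear automorphism; on it the inherited product is the componentwise commutator, i.e.\ the product of $\cC^1_0 \times \cdots \times \cC^n_0$, so restriction lands in the correct target. The morphism $\varphi$ is treated analogously via the identity $[\cC^1 \times \cdots \times \cC^n, \cC^1 \times \cdots \times \cC^n] = \cC^1_0 \times \cdots \times \cC^n_0$.

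To apply Theorem \ref{th:A.50} I would first establish smoothness via Remark \ref{re:deriv.lie}(iii). By Remark \ref{re:smooth.subscheme}(i)--(ii), $\AAut(\cC^i) \simeq \AAut(\cC^i_0)$ is smooth of dimension $14$, and from the Schafer-type derivation results used in \cite{MPP01} the three algebras $\Der(\cC^1\otimes\cdots\otimes\cC^n)$, $\Der(\cC^1\times\cdots\times\cC^n)$ and $\Der(\cC^1_0 \times \cdots \times \cC^n_0)$ all decompose as $\bigoplus_{i=1}^n \Der(\cC^i)$ of common dimension $14n$, matching the $\overline{\FF}$-dimension of the corresponding automorphism group in each case, so smoothness follows. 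Bijectivity on $\overline{\FF}$-points would then be proved by identifying all three automorphism groups over $\overline{\FF}$ with the wreath product $\Aut(\cC) \wr S_n$: for the tensor product this is a classical theorem of Schafer, and for the two direct products it follows from the decomposition into indecomposable ideals together with Remark \ref{re:smooth.subscheme}(ii). Bijectivity on Lie algebras reduces, after applying the three decompositions of derivations, to the single-factor isomorphism $\Der(\cC^i) \simeq \Der(\cC^i_0)$ induced by restriction.

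The main obstacle I anticipate is the scheme-theoretic, i.e.\ $R$-functorial, upgrade of the classical Schafer-type decompositions: one must confirm that the formation of $N'_{alt}$, the splitting of derivations and the wreath-product description of $\Aut_{\overline{\FF}}$ all persist after scalar extension to an arbitrary $R \in \Alg_{\FF}$, so that they really define morphisms of functors and can be fed into Theorem \ref{th:A.50}. Once this functoriality is in place, the two isomorphisms follow as routine dimension-plus-bijectivity checks.
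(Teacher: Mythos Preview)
Your strategy is essentially the one the paper uses: apply Theorem~\ref{th:A.50} to each restriction morphism, establish smoothness of $\AAut(\cC^1_0\times\cdots\times\cC^n_0)$ by a dimension count, and get bijectivity on $\overline{\FF}$-points and on Lie algebras from \cite[Proposition~3.6]{MPP01} together with the decomposition into simple ideals. The paper handles $\varphi(\overline{\FF})$ by an explicit injectivity/surjectivity argument (permutation of minimal ideals, then extend $\cC^i_0\to\cC^{\tau(i)}_0$ by $1\mapsto 1$) rather than by naming the wreath product, but the content is the same.

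Your anticipated obstacle is largely a non-issue, and it is worth untangling. You do need $N'_{alt}$ to commute with base change so that $\Phi(R)$ is defined for every $R$ and $\Phi$ is a morphism of functors; this is exactly what you say in your second paragraph and is straightforward. But once $\Phi$ and $\varphi$ are morphisms of schemes, Theorem~\ref{th:A.50} asks \emph{only} for bijectivity at the single level $R=\overline{\FF}$ and on Lie algebras, plus smoothness of one side. You do \emph{not} need the wreath-product description of the automorphism group, nor the splitting of derivations, to persist for arbitrary $R$; checking them over $\overline{\FF}$ (and over $\FF$ for the derivations) is enough. So the ``$R$-functorial upgrade'' you worry about in the last paragraph is required only for the well-definedness of the morphisms, not for the hypotheses of Theorem~\ref{th:A.50}.
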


\begin{proof}
By Theorem \ref{th:A.50} we see that in order to prove that $\Phi$ is an isomorphism of schemes it is enough to show that $\Phi(\overline{\FF})$ and $d \Phi$ are bijective and $\AAut (\cC^1_0 \times \cdots \times \cC^n_0)$ is smooth.

By \cite[Proposition 3.6]{MPP01} and Remark \ref{re:deriv.algebra.N} we get that $\Phi({\overline{\FF}})$ is bijective. By \cite[Proposition 3.6]{MPP01} and Remark \ref{re:deriv.lie} ii) we get that $d \Phi$ is bijective.

To prove that $\AAut (\cC^1_0 \times \cdots \times \cC^n_0)$ is smooth, by Remark \ref{re:deriv.lie} iii), it is enough to show that $\dm \Der (\cC^1_0 \times \cdots \times \cC^n_0)= \dm \Aut_{\overline{\FF}}((\cC^1_0 \times \cdots \times \cC^n_0) \otimes {\overline{\FF}})$.
We have
\[
\begin{split}
\sum_{i=1}^{n} \dm \AAut (\cC^i)& = \sum_{i=1}^{n} \dm \AAut (\cC^i_0) \hspace{0.2cm} \mbox{ (by
Remark \ref{re:smooth.subscheme} ii))}\\
& \leq \dm \AAut (\cC^1_0 \times \cdots \times \cC^n_0) \hspace{0.2cm} \mbox{ (by Remark
\ref{re:smooth.subscheme} iii) and iv))}\\
& \leq \dm \Lie (\AAut (\cC^1_0 \times \cdots \times \cC^n_0)) \hspace{0.2cm} \mbox{ (by Remark
\ref{re:deriv.lie} i))}\\
& = \dm \Der (\cC^1_0 \times \cdots \times \cC^n_0) \hspace{0.2cm} \mbox{ (by Remark
\ref{re:deriv.lie} ii))}\\
& = \sum_{i=1}^{n} \dm \Der (\cC^i) \hspace{0.2cm} \mbox{ (by \cite[Proposition 3.6]{MPP01})}\\
& = \sum_{i=1}^{n} \dm \AAut (\cC^i) \hspace{0.2cm} \mbox{ (by Remark \ref{re:smooth.subscheme} i)
and \ref{re:deriv.lie} iii))}.
\end{split}
\]
Then $\dm \AAut (\cC^1_0 \times \cdots \times \cC^n_0) = \dm \Lie (\AAut (\cC^1_0 \times \cdots \times \cC^n_0))$, from this and $\dm \AAut (\cC^1_0 \times \cdots \times \cC^n_0) = \dm \AAut (\cC^1_0 \times \cdots \times \cC^n_0) (\overline{\FF})$ (Remark \ref{re:deriv.lie} i)) follows:
\[
\begin{split}
\dm \AAut (\cC^1_0 \times \cdots \times \cC^n_0) (\overline{\FF}) & =
\dm \Lie (\AAut (\cC^1_0 \times \cdots \times \cC^n_0))\\
&= \dm \Der (\cC^1_0 \times \cdots \times \cC^n_0) \mbox{ (Remark
\ref{re:deriv.lie} ii))}
\end{split}
\]
Therefore $\AAut (\cC^1_0 \times \cdots \times \cC^n_0)$ is smooth and then $\Phi$ is an
isomorphism of schemes.

In order to prove that $\varphi$ is an isomorphism of schemes we will use again Theorem \ref{th:A.50}. We already proved that $\AAut (\cC^1_0 \times \cdots \times \cC^n_0)$ is
smooth, so we only have to prove that $\varphi (\overline{\FF})$ and $d \varphi$ are bijective.
Take $g \in \AAut (\cC^1 \times \cdots \times \cC^n) (\overline{\FF})$, then $g$ permutes the minimal ideals of $\cC^1 \times \cdots \times \cC^n$ which are $0 \times \cdots \times \cC^i \times \cdots \times 0$ for $i=1,...,n$.
Then there exists $\alpha \in S_n$ such that $g (0 \times \cdots \times \cC^i \times \cdots \times 0) = 0 \times \cdots \times \cC^{\alpha(i)} \times \cdots \times 0$
for $i=1,...,n$. Consider the isomorphism
\[
\begin{array}{rlll}
g_i: & \cC^i & \longrightarrow & \cC^{\alpha(i)} \\
& x & \longmapsto & P_{\alpha(i)}(g(0,...,\stackbin{i}{x},...,0))
\end{array}
\]
where $P_{\alpha(i)}$ is the canonical projection in the ${\alpha(i)}$-th entry. We have
\[
\begin{array}{llllll}
g = & \cC^1 \times \cdots \times \cC^n & \longrightarrow & \cC^{\alpha(1)} \times \cdots \times \cC^{\alpha(n)} & \longrightarrow &  \cC^1 \times \cdots \times \cC^n\\
 & (c_1,...,c_n) & \longmapsto & (g_1 (c_1),..., g_n(c_n)) & & \\
 &  &  & (a_1,...,a_n) & \longmapsto &  (a_{\alpha^{-1}(1)},..., a_{\alpha^{-1}(n)}).
\end{array}
\]
Since $\cC_0^i=[\cC^i,\cC^i]$, $g_i |_{\cC_0^i}: \cC_0^i \longrightarrow \cC_0^{\alpha(i)}$ is an isomorphism. Then
\[
g (\cC^1_0 \times \cdots \times \cC^n_0) = \cC^1_0 \times \cdots \times \cC^n_0,
\]
i.e., $\cC^1_0 \times \cdots \times \cC^n_0$ is invariant under $g$ and since $\cC^1_0 \times \cdots \times \cC^n_0$ generates $\cC^1 \times \cdots \times \cC^n$, we have that $\varphi(\overline{\FF})$ is injective.

Take $f \in \AAut (\cC^1_0 \times \cdots \times \cC^n_0)(\overline{\FF}) = \Aut (\cC^1_0 \times \cdots \times \cC^n_0)$. Notice that $0 \times \cdots \times \cC^i_0 \times \cdots \times 0$ is a minimal ideal of $\cC^1_0 \times \cdots \times \cC^n_0$ for $i=1,...,n$. Then there exists $\tau \in S_n$ such that $f(0 \times \cdots \times \cC^i_0 \times \cdots \times 0) = 0 \times \cdots \times \cC^{\tau(i)}_0 \times \cdots \times 0$.
Then $f$ induces the isomorphisms for $i=1,...,n$
\[
\begin{array}{llll}
f_i: & \cC^i_0 & \longrightarrow & \cC^{\tau(i)}_0 \\
& x & \longmapsto & P_{\tau(i)}\circ f (0,...,\stackbin{i}{x},...,0).
\end{array}
\]
We can extend $f_i$ to $\cC^i$ by defining
\[
\begin{array}{llll}
f'_i: & \cC^i & \longrightarrow & \cC^{\tau(i)} \\
& 1 & \longmapsto & 1 \\
& x \in \cC^i_0 & \longmapsto & f_i(x).
\end{array}
\]
Then, for the isomorphism
\[
\begin{array}{llll}
f': & \cC^1 \times \cdots \times \cC^n & \longrightarrow & \cC^1 \times \cdots \times \cC^n \\
& (0,...,\stackbin{i}{1},...,0) & \longmapsto & (0,...,\stackbin{\tau(i)}{1},...,0) \\
& (0,...,\stackbin{i}{x},...,0) & \longmapsto & (0,...,\stackbin{\tau(i)}{f_i(x)},...,0) \mbox{ for
$x\in \cC^i_0$}
\end{array}
\]
we have that $\varphi (\overline{\FF})(f') =f$. Hence $\varphi(\overline{\FF})$ is surjective and therefore $\varphi(\overline{\FF})$ is an isomorphism.

We will prove that $d \varphi$ is an isomorphism. Since $0 \times \cdots \times \cC^i\times \cdots \times 0$ is an ideal of $\cC^1 \times \cdots \times \cC^n$ for all $i=1,...,n$, it follows
$\Der (\cC^1 \times \cdots \times \cC^n) = \Der (\cC^1) \times \cdots \times  \Der(\cC^n)$.
By \cite[Proposition 3.6]{MPP01} we have
$\Der (\cC^1_0 \times \cdots \times \cC^n_0) \simeq \Der (\cC^1) \times \cdots \times \Der(\cC^n)$.
Therefore
$\Der (\cC^1_0 \times \cdots \times \cC^n_0) \simeq \Der (\cC^1 \times \cdots \times \cC^n)$.

Last part is Lemma \ref{le:grad_involution_tensor_Cayley}.
\end{proof}

\begin{remark}\label{re.forms.CxC}
The above result allows to classify forms of the structurable algebra $\cC\otimes\cC$ in a direct way, this is because it is equivalent to classify forms of $\cC\times\cC$. Forms of the tensor product of a Cayley algebra and a Hurwitz algebra were already described in \cite[Proposition 7.9]{AF92}.

\end{remark}

\subsection[Tensor product of two Hurwitz algebras]{Gradings on the tensor product of two Hurwitz Algebras}\label{subsection.Tensor.gradings.on.tensor.of.Hurwitz}

We want to find involution preserving gradings on the algebra $(\cC \otimes \cB, -)$ where $\cC$ is a Cayley algebra, $\cB$ is a Hurwitz algebra and $-$ is the tensor product of their involutions.
\textit{From now on by grading we will refer to involution preserving grading, unless indicated otherwise, and if there is no confusion we will omit the involution}.
First we give the classification, up to equivalence and isomorphism, of gradings on $\cC\otimes\cB$ where $\cC$ is a Cayley algebra and $\cB$ is a Hurwitz algebra of dimension 2 and 4. The case where $\cB$ is also a Cayley algebra will be left to Section \ref{subsection.Tensor.tensor.product}.

\vspace*{0.5cm}

First we will see some interesting graded subspaces.

\begin{lemma}\label{le:grad.subspaces}

Let $\Gamma$ be a $G$-grading on an algebra $\cA$ for a group $G$, then the following spaces are $G$-graded subspaces of $\cA$

\begin{enumerate}
\item[a)] $N_{alt}(\cA)$ (see Definition \ref{df:3.1.mpp}),

\item[b)] $\cS(\cA,-)$ and $\cH(\cA,-)$,  (see Definition \ref{df:structurable.algebra}) if $\cA= (\cA, -)$ is an algebra with involution,

\item[c)] $[\cA, \cA]$,

\item[d)] the center of $\cA$ (i.e., $Z (\cA):= \lbrace x \in \cA : xy = yx, \hspace*{0.2cm} (x,y,z)= (y,x,z) = (y,z,x) =0, \hspace*{0.2cm} \forall y,z \in \cA \rbrace $),

%\item[e)] the associator of $\cA$ (i.e., $As (\cA):= \lbrace x \in \cA : (x,y,z)= (y,x,z) = (y,z,x)=0, \hspace*{0.2cm} \forall y,z \in \cA \rbrace $),

\item[e)] $J(\cA, \cA, \cA) = \mathrm{span}\lbrace [[x,y],z]+ [[z,x],y] + [[y,z],x] : x,y,z \in \cA \rbrace$,

\item[f)] $\mathcal{D}= \lbrace x \in \cA : J(x, \cA, \cA) = \lbrace 0 \rbrace \rbrace$.

%\item[h)] $\cB':=\lbrace x \in \cA : [x,\cB]= \lbrace 0 \rbrace \rbrace$ if $\cB$ is a graded subspace of $\cA$.
\end{enumerate}
\end{lemma}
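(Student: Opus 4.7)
The plan is to extract a single unifying principle and apply it to each item. In every case, the defining property of the subspace is the vanishing (or fixing) of a multilinear expression $\phi(a, y_1,\ldots,y_n)$ built from the product of $\cA$, the involution, the commutator, or the associator, where $\phi$ is required to vanish for all $y_1,\ldots,y_n\in\cA$. Because each of these operations is bilinear (or trilinear) and sends a tuple $(a_g, y_{1,h_1},\ldots,y_{n,h_n})$ of homogeneous elements to the homogeneous component $\cA_{g h_1\cdots h_n}$, and because the grading group is abelian (so the total degree depends on $g$ alone once the $h_i$ are fixed), I can split a general $a = \sum_g a_g$ componentwise. First I would reduce the universal quantifier ``for all $y_i\in\cA$'' to ``for all homogeneous $y_i$'' by multilinearity, and then argue that the relation
\[
\sum_{g\in G} \phi(a_g, y_{1,h_1},\ldots,y_{n,h_n}) = 0
\]
is a sum of elements lying in \emph{pairwise distinct} homogeneous components (as $g$ varies), hence forces $\phi(a_g, y_{1,h_1},\ldots,y_{n,h_n}) = 0$ for every $g$. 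By linearity again, this gives $\phi(a_g, y_1,\ldots,y_n)=0$ for arbitrary $y_i$, i.e.\ $a_g$ lies in the subspace.

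I would then instantiate this scheme in each case. For (a), $\phi$ is any of the three trilinear associator combinations $(a,x,y)+(x,a,y)$ and $(a,x,y)-(x,y,a)$; each $a_g$ inherits membership in $N_{alt}(\cA)$. For (b), the involution preserves each $\cA_g$ (this is exactly the hypothesis that the grading is involution preserving, which is in force throughout the paper), and $\cS(\cA,-)$, $\cH(\cA,-)$ are the $\mp 1$-eigenspaces of a diagonalizable endomorphism stabilizing the decomposition, so they are graded. For (c), I observe directly that $[\cA_g, \cA_h] \subseteq \cA_{gh}$, so $[\cA,\cA]$ is spanned by homogeneous commutators and decomposes as $\bigoplus_g ([\cA,\cA]\cap\cA_g)$. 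For (d) I apply the principle to the four conditions $xy-yx$, $(x,y,z)$, $(y,x,z)$, $(y,z,x)$ simultaneously. For (e), $J(x,y,z)$ is trilinear and homogeneous, so the span of $J(x,y,z)$ over homogeneous triples already gives a graded subspace equal to $J(\cA,\cA,\cA)$. For (f), the same projection argument applied to the trilinear $J(a,y,z)$ shows $\mathcal{D}$ is graded.

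The only place where one could stumble is in (d), where one must be careful to note that $xy-yx \in \cA_{gh}$ uses commutativity of $G$ (since $a_g y_h \in \cA_{gh}$ and $y_h a_g \in \cA_{hg}$, and one needs $gh = hg$ to add them inside a single component); but this is fine by the standing assumption that grading groups are abelian (Section~\ref{subsection.Hurwitz}). Thus the main obstacle is really just notational: writing the argument once abstractly and then pointing to each case, rather than repeating the same computation six times. I would state the principle as a short preliminary observation and then dispatch (a)--(f) in one or two lines apiece.
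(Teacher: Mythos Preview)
Your proposal is correct and follows essentially the same approach as the paper's own proof. The paper writes out the componentwise splitting argument in detail only for (a), declares (b), (d), (f) to be ``similar,'' and handles (c) (and analogously (e)) by the direct observation that $[\cA,\cA]=\sum_{g,h}[\cA_g,\cA_h]$ with each summand contained in $\cA_{gh}$; your abstraction of this into a single multilinear principle is a mild notational improvement but not a different argument.
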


\begin{proof}
a) Let $a$ be in $N_{alt} (\cA)$ then there exist $a_i \in \cA_{g_i}$ for $i=1,...,n$ and $g_i\in G$ with $g_i\neq g_j$ if $i\neq j$ such that $a= \sum_{i=1}^n a_i$. For all homogeneous elements $x,y\in \cA$ we have
\[
\left( \sum_{i=1}^n a_i,x,y\right)= - \left(x,\sum_{i=1}^n a_i,y\right)=\left( x,y,\sum_{i=1}^n a_i\right)
\]
which is the same that
\[
\sum_{i=1}^n (a_i x) y - \sum_{i=1}^n a_i (xy)= -  \sum_{i=1}^n (x a_i) y + \sum_{i=1}^n x (a_i y)  \\
= \sum_{i=1}^n (xy)a_i- \sum_{i=1}^n x (y a_i).
\]
Then $(a_i,x,y)=-(x,a_i,y)=(x,y,a_i)$
for all $i=1,...,n$. Since this is satisfied for all homogeneous elements, it is satisfied for all $x,y\in \cA$. Then $a_i\in N_{alt} (\cA)$ for all $i=1,...,n$. The proofs of b), d) and f) are similar.

c) As $\cA=\bigoplus_{g\in G} \cA_g$ we have $[\cA,\cA]=\sum_{g,h\in G} [\cA_g,\cA_h]$
where each $[\cA_g,\cA_h]$ is a graded subspace because it is contained in $\cA_{gh}$.
The proof of e) is similar. 

\end{proof}

Next result shows that any grading on the tensor product of a Cayley algebra and a quaternion algebra preserves the involution. Recall that if $(\cA,-)$ is an algebra with involution, then $\Aut(\cA,-)$ denotes the group of involution preserving automorphisms in $\cA$.

\begin{lemma}\label{le:aut_cayley_tensor_quaternions}
Let $\cC$ be a Cayley algebra and let $\cH$ be a Hurwitz algebra of dimension $4$. Then
$N_{alt} (\cC\otimes \cH)= \cC\otimes 1 + 1\otimes \cH$ and
\[
\AAut(\cC\otimes\cH)=\AAut(\cC\otimes\cH,-)
\]
where $-$ is the tensor product of the involutions of $\cC$ and $\cH$. 
\end{lemma}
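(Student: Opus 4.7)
The plan is to first pin down $N_{alt}(\cC \otimes \cH)$ explicitly, then deduce from its structure that every $R$-algebra automorphism of $\cC \otimes \cH$ automatically commutes with the involution $\sigma = - \otimes -$.

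For the inclusion $\cC \otimes 1 + 1 \otimes \cH \subseteq N_{alt}(\cC \otimes \cH)$, I would compute the associator directly on elementary tensors. Using associativity of $\cH$, one finds $((c_1 \otimes h_1)(c_2 \otimes h_2))(c_3 \otimes h_3) - (c_1 \otimes h_1)((c_2 \otimes h_2)(c_3 \otimes h_3)) = (c_1,c_2,c_3) \otimes h_1 h_2 h_3$. Alternativity of $\cC$ then gives the three required identities for $c \otimes 1$, and $(1,x,y) = (x,1,y) = (x,y,1) = 0$ handles $1 \otimes h$ trivially.

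For the reverse inclusion, I would decompose any $\xi \in N_{alt}$ as $\xi_1 + \xi_0$ with $\xi_1 \in \cC \otimes 1 \subseteq N_{alt}$ and $\xi_0 \in \cC \otimes \cH_0$, so that $\xi_0 \in N_{alt}$ by linearity. Writing $\xi_0 = \sum_l c_l \otimes h_l$ with $\{h_l\}$ linearly independent in $\cH_0$ and $c_l \in \cC$, and substituting $x \otimes h, y \otimes k$ into the identity $(\xi_0, \cdot, \cdot) = -(\cdot,\xi_0,\cdot)$, the alternativity of $\cC$ and the associativity of $\cH$ collapse the identity to
\[
\sum_l (c_l, x, y) \otimes [h_l, h]\, k = 0 \quad \text{for all } x,y \in \cC,\ h,k \in \cH.
\]
Setting $k = 1$ and applying an arbitrary linear functional $\phi \in \cC^*$ to the first tensor factor gives $[\sum_l \phi((c_l,x,y))\,h_l,\, h] = 0$ for every $h \in \cH$; since $Z(\cH) = \FF 1$ and $h_l \in \cH_0$ are linearly independent, this forces $\phi((c_l,x,y)) = 0$ for every $l$. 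As $\phi, x, y$ are arbitrary, each $c_l$ lies in the nucleus of $\cC$, which equals $\FF 1$. Hence $\xi_0 \in \FF 1 \otimes \cH_0 \subseteq 1 \otimes \cH$, and the claim follows. The delicate point, and the main obstacle, is exactly this step: the functional-pairing argument, which relies crucially on $\cH$ being central (so that $Z(\cH) \cap \cH_0 = 0$) and on $\cC$ having trivial nucleus in $\cC_0$.

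For the second equality, the identification of $N_{alt}$ is functorial in $R$, so $N_{alt}((\cC \otimes \cH) \otimes R) = (\cC \otimes 1 + 1 \otimes \cH) \otimes R$ for every $R \in \Alg_\FF$. Any $\varphi \in \Aut_R((\cC \otimes \cH) \otimes R)$ preserves this subspace and therefore its commutator ideal $[N_{alt}, N_{alt}]$. A short calculation using $[\cC \otimes 1, 1 \otimes \cH] = 0$, $[\cC,\cC] = \cC_0$, and $[\cH,\cH] = \cH_0$ gives $[N_{alt}, N_{alt}] = (\cC_0 \otimes 1) \oplus (1 \otimes \cH_0)$ (extended $R$-linearly). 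On this subspace $\sigma = -\id$, so $\sigma \varphi = \varphi \sigma$ holds there. Since $\cC_0$ generates $\cC$, $\cH_0$ generates $\cH$, and $(c \otimes 1)(1 \otimes h) = c \otimes h$, the subspace $\cC_0 \otimes 1 + 1 \otimes \cH_0$ generates $\cC \otimes \cH$ as an algebra. As $\varphi$ is multiplicative and $\sigma$ is an antiautomorphism, the identity $\varphi \circ \sigma = \sigma \circ \varphi$ on a generating set propagates to the whole algebra, proving $\varphi \in \AAut(\cC \otimes \cH, -)(R)$.
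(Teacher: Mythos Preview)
Your proof is correct, and for the reverse inclusion $N_{alt}(\cC\otimes\cH)\subseteq \cC\otimes 1 + 1\otimes\cH$ you take a genuinely different route from the paper. The paper argues via the $(\Z/2)^5$-grading coming from the Cayley--Dickson doublings on $\cC$ and $\cH$: since $N_{alt}$ is a graded subspace and all homogeneous components are one-dimensional, a hypothetical homogeneous element $x\otimes i$ with $x\notin\FF1$ can be tested against $y\otimes j$, $z\otimes j$ to force $(x,y,z)=0$ and reach a contradiction. Your argument avoids introducing any grading: you split off the $\cC\otimes 1$ part, then use alternativity of $\cC$ to reduce the $N_{alt}$ condition on the remainder to $\sum_l (c_l,x,y)\otimes[h_l,h]=0$, and finish with the pairing/centrality observation $Z(\cH)\cap\cH_0=0$ together with the fact that the nucleus of a Cayley algebra is $\FF1$. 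This is more elementary and more portable (no auxiliary grading, no implicit appeal to special bases), and it makes the two structural inputs --- $\cH$ central associative, $\cC$ with trivial nucleus --- completely transparent. The paper's approach, on the other hand, fits naturally with the grading machinery developed elsewhere in the paper. The forward inclusion and the scheme-level argument (preservation of $[N_{alt},N_{alt}]=(\cC_0\otimes1)\oplus(1\otimes\cH_0)$, on which $\sigma=-\id$, plus generation) are essentially identical in both proofs; your remark that the computation of $N_{alt}$ passes to $\cA\otimes R$ is justified by flatness over the field $\FF$.
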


\begin{proof}
Recall $\cH$ is associative. For all $x,y,z\in \cC$ and $u,v,w\in\cH$ we have
\[
\begin{split}
(x\otimes u, y\otimes v, z\otimes w) & =(xy\otimes uv)(z\otimes w)-(x\otimes u)(yz\otimes vw)\\
& =(xy)z\otimes uvw- x(yz)\otimes uvw = (x,y,z)\otimes uvw.
\end{split}
\]
For all $y,z\in \cC$ and $u,v,w\in\cH$ we have
$(1\otimes u, y\otimes v, z\otimes w)= (1,y,z)\otimes uvw=0$, 
$(y\otimes v, 1\otimes u, z\otimes w)=0$ and $(y\otimes v, z\otimes w, 1\otimes u)=0$.
Then $1\otimes \cH \subseteq N_{alt}(\cC\otimes \cH)$.
For all $x,y,z\in \cC$ and $v,w\in\cH$ we have
$(x\otimes 1, y\otimes v, z\otimes w)= (x,y,z)\otimes vw$, $(y\otimes v, x\otimes 1, z\otimes w)=(y,x,z)\otimes vw$ and $(y\otimes v, z\otimes w, x\otimes 1)=(y,z,x)\otimes vw$.
Since $\cC$ is alternative, it follows that $(x,y,z) = -(y,x,z) = (y,z,x)$ and then $\cC\otimes 1 \subseteq N_{alt}(\cC\otimes \cH)$.
Therefore $1\otimes \cH +\cC\otimes 1 \subseteq N_{alt}(\cC\otimes \cH)$.

In order to prove the reverse containment we will consider the $(\Z/2)^5$-grading on $\cC\otimes \cH$ formed by the $(\Z/2)^3$-grading on $\cC$ and the $(\Z/2)^2$-grading on $\cH$ (both gradings induced by the Cayley-Dickson doubling process), such grading is explicitly given later in this section. Notice that each homogeneous component in such grading has dimension 1. By Lemma \ref{le:grad.subspaces} a) $N_{alt}(\cC\otimes \cH)$ is $(\Z/2)^5$-graded. The induced $(\Z/2)^5$-grading is given by
\[
\Gamma: N_{alt}(\cC\otimes \cH) =\bigoplus_{g\in (\Z/2)^5} \left( N_{alt}(\cC\otimes \cH)\cap (\cC\otimes \cH)_g \right).
\]
Therefore, each homogeneous component in such grading has dimension 1.
Consider the basis $\{ 1,i,j,k \}$ of $\cH$ where every element of the basis is homogeneous. Suppose there exist $e\neq a\in (\Z/2)^3$ and $e\neq b\in (\Z/2)^2$ such that $\cC_a\otimes\cH_b\subseteq N_{alt}(\cC\otimes \cH)$. Without loss of generality suppose $\cH_b =\FF i$, then $x\otimes i \in N_{alt}(\cC\otimes \cH)$ for $x\in \cC \setminus \FF 1$. For all $y,z\in \cC$ and $u,v\in\cH$ we have
\[
(x\otimes i, y\otimes u, z\otimes v)= -(y\otimes u, x\otimes i, z\otimes v)= (y\otimes u, z\otimes v, x\otimes i)
\]
which is the same that $(x,y,z)\otimes iuv= -(y,x,z)\otimes uiv= (y,z,x)\otimes uvi$.
If we take $u=v=j$ we have $(x,y,z)\otimes i = (y,x,z)\otimes i$ and since $\cC$ is alternative, we have $(x,y,z)=(y,x,z)=0$ for all $y,z\in \cC$ and then $x\in \FF 1$ which is a contradiction. Therefore $1\otimes \cH +\cC\otimes 1 \supseteq N_{alt}(\cC\otimes \cH)$.

Now we will prove that for any object $R$ in $\Alg_{\FF}$ we have
\[
\Aut_{R-\mathrm{alg}}(\cC\otimes\cH\otimes R)=\Aut_{R-\mathrm{alg}}(\cC\otimes\cH\otimes R,-).
\]
$\supseteq$ is clear. Take $\varphi\in \Aut_{R-\mathrm{alg}}(\cC\otimes\cH\otimes R)$. Observe that $\varphi$ preserves $[N_{alt}(\cC\otimes\cH\otimes R),N_{alt}(\cC\otimes\cH\otimes R)]$ and
\[
\begin{split}
[N_{alt}(\cC\otimes\cH\otimes R),N_{alt}(\cC\otimes\cH\otimes R)] &
= [(\cC\otimes1 + 1\otimes\cH)\otimes R), (\cC\otimes 1 + 1\otimes\cH)\otimes R)]\\
& = \cC_0 \otimes 1\otimes R + 1\otimes\cH_0\otimes R.
\end{split}
\]
It is straightforward to prove that $-\otimes id_R = -id_{\cC_0 \otimes 1\otimes R + 1\otimes\cH_0\otimes R}$. Then $\varphi\circ(-\otimes id_R)= (-\otimes id_R)\circ \varphi$ in $\cC_0 \otimes 1\otimes R + 1\otimes\cH_0\otimes R$ and, since $\cC_0 \otimes 1\otimes R + 1\otimes\cH_0\otimes R$ generates $\cC\otimes\cH\otimes R$ (see Remark \ref{re:C_0}), also in $\cC\otimes\cH\otimes R$. Then
\[
\AAut(\cC\otimes\cH)=\AAut(\cC\otimes\cH,-).
\]
\end{proof}

\begin{remark}\label{re:involution_non_important}
Let $\cC, \cC^1$ and $\cC^2$ be Cayley algebras and let $\cH$ be a quaternion algebra.
All gradings on $\cC^1\otimes\cC^2$  and $\cC\otimes\cH$ preserve the involution.
This follows from Theorem \ref{th:iso.aut.schemes.cayley} and Lemma \ref{le:aut_cayley_tensor_quaternions} since any automorphism in these algebras preserves the involution. Notice that this is not the case for the tensor product of a Cayley algebra $\cC$ and a Hurwitz algebra $\cK$ of dimension $2$, this is easy to see from the fact that
$\cC\otimes\cK \simeq \cC\times\cC$  as algebras and, by Theorem \ref{th:iso.aut.schemes.cayley}, gradings on $\cC\times\cC$ are in correspondence with gradings on $\cC\otimes\cC$. So, if gradings on $\cC\otimes\cK$ did not depend on the involution we would have a correspondence between gradings on $\cC\otimes\cK$ and $\cC\otimes\cC$. Theorem \ref{th:grad.tensor.prod.dim=2,4} will show that this is not possible.
\end{remark}

\begin{remark}\label{re:cayley.simple}
It is well known that the Hurwitz algebras of dimension 4 and 8 are simple.
Moreover, since $\ch \FF \neq 2$, for the Hurwitz algebra $\cB$ where $\dm(\cB)= 4,8$ we have that $\cB_0$ is a simple subalgebra of $\cB$ under the  product given by the commutator.
\end{remark}

Next result will be useful in the proof of Theorem \ref{th:grad.tensor.prod.dim=2,4}. 

\begin{lemma}\label{le:graded.ideals}
Let $\cA=\cA_1\oplus \cA_2$ be a finite dimensional $G$-graded algebra and let $\cA_1$ and $\cA_2$ be central simple ideals of $\cA$ such that $\dim \cA_1\neq \dim \cA_2$. Then $\cA_1$ and $\cA_2$ are graded ideals of $\cA$.
\end{lemma}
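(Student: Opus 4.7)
My plan is to analyze the center $Z(\cA)$, which is a graded subspace by Lemma~\ref{le:grad.subspaces}(d). Since $\cA_1$ and $\cA_2$ are ideals with trivial intersection, $\cA_1\cA_2 = \cA_2\cA_1 = 0$; combined with central simplicity, this gives $Z(\cA) = \FF e_1 \oplus \FF e_2$, where $e_i$ denotes the unit of $\cA_i$ (any $a\in Z(\cA)$ decomposes as $a_1+a_2$ with $a_i\in Z(\cA_i) = \FF e_i$, using that commutators and associators split across the two ideals). The key reduction is then the following: it suffices to show $e_1, e_2 \in \cA_e$. Indeed, in that case $\cA_i = e_i\cA = \bigoplus_g e_i\cA_g \subseteq \bigoplus_g (\cA_i\cap\cA_g) \subseteq \cA_i$, so each $\cA_i$ is a graded subspace, hence a graded ideal.

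Because $1 = e_1+e_2 \in \cA_e$, the two-dimensional graded subspace $Z(\cA)$ is either contained in $\cA_e$ (in which case we are finished) or decomposes as $Z(\cA) = \FF 1 \oplus \FF u$ with $0\neq u \in Z(\cA)\cap\cA_g$ for some $g\neq e$. In the latter situation I would examine $u^2 \in Z(\cA)\cap\cA_{g^2}$. If $g$ has order larger than~$2$, then necessarily $u^2 = 0$, so $Z(\cA) \cong \FF[x]/(x^2)$ is a local ring with no nontrivial idempotents, contradicting the existence of $e_1, e_2$. If $g$ has order $2$, then $u^2 = \gamma\cdot 1$ for some $\gamma\in\FF$; the same local/field obstruction rules out nontrivial idempotents when $\gamma = 0$ or when $\gamma$ is a nonsquare in $\FF$ (in the latter case $Z(\cA)$ is a quadratic field extension of $\FF$).

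The one remaining subcase is $\gamma = \delta^2$ with $\delta\neq 0$, where $e_1 = \tfrac{1}{2}(1+u/\delta)$ and $e_2 = \tfrac{1}{2}(1-u/\delta)$ (up to relabeling) genuinely exist as non-homogeneous idempotents. To eliminate this, I would pass to the algebraic closure $\overline{\FF}$: the grading yields a homomorphism $G^D \to \AAut(\cA)$, and since $\overline{\FF}^{\times}$ is divisible (hence injective in the category of abelian groups), the character $\langle g\rangle\to\overline{\FF}^{\times}$ sending $g$ to $-1$ extends to some $\chi \in \Hom(G,\overline{\FF}^{\times}) = G^D(\overline{\FF})$. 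The corresponding automorphism of $\cA\otimes\overline{\FF}$, acting on each $\cA_h\otimes\overline{\FF}$ by $\chi(h)$, fixes $1\otimes 1$ and sends $u\otimes 1$ to $-u\otimes 1$, thus swapping $e_1\otimes 1$ and $e_2\otimes 1$, and therefore swapping the ideals $\cA_1\otimes\overline{\FF} = (e_1\otimes 1)(\cA\otimes\overline{\FF})$ and $\cA_2\otimes\overline{\FF}$. This forces $\dim\cA_1 = \dim\cA_2$, contradicting the hypothesis. I expect this last base-change step via Cartier duality to be the main obstacle, since over $\FF$ alone there may be no character $G\to\FF^{\times}$ with $\chi(g)=-1$ (e.g.\ for $G=\Z/4\Z$ and $g$ its element of order~$2$); the rest of the proof is essentially formal case analysis on the graded structure of the $2$-dimensional algebra $Z(\cA)$.
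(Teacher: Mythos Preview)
Your argument has a genuine gap: you assume each central simple ideal $\cA_i$ possesses a unit $e_i$. This is not part of the hypotheses, and in fact fails in the very application the lemma is designed for. In Theorem~\ref{th:grad.tensor.prod.dim=2,4} the lemma is applied to $\cS = (\cC_0 \otimes 1) \oplus (1 \otimes \cB_0)$ with the commutator product; the ideals $\cC_0 \otimes 1$ and $1 \otimes \cB_0$ are anticommutative simple algebras, hence have no unit, and in fact $Z(\cS) = 0$ (any central element $z$ satisfies $zy = yz = -zy$ for all $y$, so $z$ annihilates $\cS$). Your entire analysis of the two-dimensional graded algebra $Z(\cA)$ therefore collapses in the case that matters.

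The fix is straightforward: replace the center $Z(\cA)$ by the \emph{centroid} $C(\cA)$, and the units $e_i$ by the projections $\pi_i \colon \cA \to \cA_i$. Central simplicity of the $\cA_i$ gives $C(\cA) = \FF\pi_1 \oplus \FF\pi_2$, a two-dimensional commutative associative unital algebra, and the centroid is a graded subspace of $\End(\cA)$ for any group grading on $\cA$. Your case analysis then goes through verbatim with $u = \pi_1 - \pi_2$; in the final subcase, the automorphism $\varphi_\chi$ of $\cA \otimes \overline{\FF}$ conjugates $\pi_1$ to $\pi_2$, hence swaps the ideals $\cA_i \otimes \overline{\FF}$ (which remain simple by central simplicity), forcing $\dim\cA_1 = \dim\cA_2$.

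With this correction your approach is actually more elementary than the paper's. The paper also works with the centroid over $\overline{\FF}$ and arrives at the same homogeneous element of order $2$, but then invokes the loop-algebra machinery of \cite{ABFP08} and \cite{CE18} to conclude $\overline{\cA_1} \simeq \overline{\cA_2}$; your direct construction of the swapping automorphism via a character $\chi$ with $\chi(h) = -1$ avoids that machinery entirely.
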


\begin{proof}
Suppose $\cA$ is graded-simple (i.e., $\cA^2\neq 0$ and the only graded ideals of $\cA$ are $0$ and $\cA$). 
We extend $\cA$ to the algebraic closure $\cA\otimes_{\FF} \overline{\FF}:=\overline{\cA}$ which is graded-simple as well. Set $\overline{\cA_i}= \cA_i\otimes_{\FF} \overline{\FF}$ for $i=1,2$.
From \cite[Chapter 1, Section 2]{EM94} we get that $C(\overline{\cA})\simeq C(\overline{\cA_1})\oplus C(\overline{\cA_2}) \simeq \overline{\FF} \times \overline{\FF}$, where $C(\overline{\cA})$ is the centroid of $\overline{\cA}$. By \cite[Lemma 4.2.3 ii)]{ABFP08}  we have that $C(\overline{\cA})$ is a $G$-graded algebra. 
Since $\cA$ is finite dimensional we can apply \cite[Lemma 4.3.4]{ABFP08} and we get that $\overline{\cA}$ is graded-central-simple (graded-simple and \textit{graded-central}: $C(\overline{\cA})_e=\FF 1$ where $e$ is the neutral element of $G$ and $1$ is the unity of $C(\overline{\cA})$).
It follows that $\overline{\FF}(Id_{\overline{\cA_1}},-Id_{\overline{\cA_2}})= C(\overline{\cA})_h$ for some $h\in G$ such that $h\neq e$ and $h^2=e$.
Let $\Gamma_{C(\overline{\cA})}$ be the grading induced on $C(\overline{\cA})$. Take $H:=\Supp (\Gamma_{C(\overline{\cA})})= \lbrace e,h \rbrace$, $\overline{G}:= G/H$ and let $\pi: G\rightarrow \overline{G}$ be the canonical projection. 
By \cite[Theorem 4.1]{CE18}, we have $ L_{\pi}(\overline{\cA_1})\simeq \overline{\cA} \simeq L_{\pi}(\overline{\cA_2})$ as $G$-graded algebras. By \cite[Theorem 3.5 (5)]{CE18} we have $\overline{\cA_1}\simeq \overline{\cA_2}$ which is a contradiction. Therefore $\overline{\cA}$ has nontrivial graded ideals which are $\overline{\cA_1}$ and $\overline{\cA_2}$ because they are simple factors. Finally we have that $\cA_1$ and $\cA_2$ are graded ideals of $\cA$.

\end{proof}

Next theorem shows what the gradings on $\cC \otimes \cB$ look like, where $\cC$ is a Cayley algebra and $\cB$ is a Hurwitz algebra of dimension 2 or 4. It also classifies gradings on these algebras up to isomorphism.

\begin{theorem}\label{th:grad.tensor.prod.dim=2,4}
Let $\cA=(\cC \otimes \cB, -)$ be the algebra with involution where $\cC$ is a Cayley algebra and $\cB$ is a Hurwitz algebra of dimension 2 or 4 and $-$ denotes the tensor product of the involutions of $\cC$ and $\cB$. Then $\Gamma: \cA= \oplus_{g \in G} \cA_g$ is a $G$-grading on $\cA$ if and only if there exist $G$-gradings
\[
\Gamma^1: \cC = \bigoplus\limits_{g \in G} \cC_g \mbox{\hspace*{0.1cm} and \hspace*{0.1cm}}
\Gamma^2: \cB = \bigoplus\limits_{g \in G} \cB_g
\]
such that for all $g \in G$
\[
\cA_g = \bigoplus\limits_{g_1, g_2 \in G : g_1 g_2 = g} \cC_{g_1} \otimes \cB_{g_2}.
\]
Moreover, two $G$-gradings $\Gamma$ and $\Gamma'$ on $\cA$ are isomorphic if and only if so are $\Gamma^1$ and $(\Gamma')^1$ on $\cC$ and $\Gamma^2$ and $(\Gamma')^2$ on $\cB$.
\end{theorem}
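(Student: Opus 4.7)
The plan is to extract the gradings on the two tensor factors from canonical graded subspaces of $\cA$, with different arguments needed for $\dim\cB=4$ and $\dim\cB=2$. The reverse implication is straightforward: given $G$-gradings $\Gamma^1$ on $\cC$ and $\Gamma^2$ on $\cB$, the tensor product grading is a $G$-grading on $\cA$, and it preserves the standard involution $x\mapsto n(x,1)1-x$ because any grading on a Hurwitz algebra preserves the norm. For the isomorphism assertion, the plan is to show that any graded isomorphism $\varphi\colon\cA\to\cA'$ preserves the distinguished graded subspaces identified below, whence it restricts to graded isomorphisms $\varphi_1\colon\cC\to\cC$ and $\varphi_2\colon\cB\to\cB$; conversely $\varphi_1\otimes\varphi_2$ realises the isomorphism of $\Gamma$ with $\Gamma'$.

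\textbf{Case $\dim\cB=4$.} By Lemma \ref{le:aut_cayley_tensor_quaternions} one has $N_{alt}(\cA)=\cC\otimes 1+1\otimes\cH$, which is a graded subspace by Lemma \ref{le:grad.subspaces}(a). Using that $\cC\otimes 1$ and $1\otimes\cH$ commute elementwise, and that $[\cC,\cC]=\cC_0$ and $[\cH,\cH]=\cH_0$, I take the derived subalgebra to obtain
\[
[N_{alt}(\cA),N_{alt}(\cA)]=(\cC_0\otimes 1)\oplus(1\otimes\cH_0),
\]
still graded by Lemma \ref{le:grad.subspaces}(c). The two summands are central simple (as Malcev and Lie algebras, respectively, by Remark \ref{re:cayley.simple}) of dimensions $7$ and $3$, so Lemma \ref{le:graded.ideals} applied to this sum forces each summand to be a graded subspace of $\cA$.

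\textbf{Case $\dim\cB=2$.} Here $\cC\otimes\cK$ is itself alternative, so $N_{alt}(\cA)=\cA$ carries no information, and the involution must be used essentially (in accordance with Remark \ref{re:involution_non_important}). Since $\Gamma$ preserves the involution, $\cS(\cA)=(\cC_0\otimes 1)\oplus(1\otimes\cK_0)$ is graded by Lemma \ref{le:grad.subspaces}(b). The key observation is that the bracket $[c\otimes 1,1\otimes k]=c\otimes k-c\otimes k$ vanishes identically and $\cC_0$ has trivial centre as a central simple Malcev algebra, so under the commutator product the centre of $\cS(\cA)$ equals $1\otimes\cK_0$; this is graded by Lemma \ref{le:grad.subspaces}(d). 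Consequently $\cC_0\otimes 1=[\cS(\cA),\cS(\cA)]$ is graded by Lemma \ref{le:grad.subspaces}(c).

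In both cases $1\otimes 1$ is homogeneous of degree $e$, so $\cC\otimes 1=\FF(1\otimes 1)\oplus(\cC_0\otimes 1)$ and $1\otimes\cB=\FF(1\otimes 1)\oplus(1\otimes\cB_0)$ are graded subspaces, inducing gradings $\Gamma^1$ on $\cC$ and $\Gamma^2$ on $\cB$. The product identity $(c\otimes 1)(1\otimes b)=c\otimes b$ then forces $\cC_{g_1}\otimes\cB_{g_2}\subseteq\cA_{g_1g_2}$, and a comparison of dimensions over the support recovers the claimed decomposition of each $\cA_g$. For the isomorphism statement, any graded isomorphism preserves $N_{alt}(\cA)$ or $\cS(\cA)$ (and hence the canonical graded subspaces $\cC\otimes 1$ and $1\otimes\cB$), which cannot be interchanged as their dimensions differ, so it restricts to graded isomorphisms on the factors. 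The principal obstacle is the $\dim\cB=2$ case: the $N_{alt}$ trick that handles the quaternion case fails, and one must instead pass to the Malcev structure on $\cS(\cA)$ and exploit that $\cK_0$ is one-dimensional while $\cC_0$ has trivial Malcev centre.
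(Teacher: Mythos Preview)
Your proof is correct and follows essentially the same approach as the paper. The only cosmetic difference is in the $\dim\cB=4$ case: you reach the key graded subspace $(\cC_0\otimes 1)\oplus(1\otimes\cH_0)$ as the derived subalgebra of $N_{alt}(\cA)$, whereas the paper identifies it directly as $\cS(\cA,-)$; since these two subspaces coincide, the arguments merge immediately and both invoke Lemma~\ref{le:graded.ideals} and proceed identically from there.
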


\begin{proof}
We have that $\cS(\cC \otimes \cB, -)= \cC_0 \otimes 1 \oplus 1 \otimes \cB_0$ (see definitions \ref{df:antisymmetric.elements} and \ref{df:structurable.algebra}).

\noindent $\Rightarrow)$
By Remark \ref{re:cayley.simple} and Lemma \ref{le:grad.subspaces} b) we get that $\cS:=\cS(\cC \otimes \cB,-)$ is a $G$-graded subalgebra of $\cA$ with the product given by the commutator. Set
\[
\Gamma_{\cS}:= \Gamma |_{\cS}: \cS = \bigoplus_{g\in G} (\cA_g \cap \cS).
\]
Suppose $\dim (\cB)=2$.
We will prove that $[\cS, \cS]= \cC_0 \otimes 1$ and $Z(\cS) = 1 \otimes \cB_0$.
We have
\[
\begin{split}
[\cS, \cS]= & [\cC_0 \otimes 1 \oplus 1 \otimes \cB_0, \cC_0 \otimes 1 \oplus 1 \otimes
\cB_0 ]\\
= & [\cC_0 \otimes 1 , \cC_0 \otimes 1] + [\cC_0 \otimes 1 , 1 \otimes \cB_0] +
[1 \otimes \cB_0 , \cC_0 \otimes 1 ] + [1 \otimes \cB_0 , 1 \otimes \cB_0] \\
= & [\cC_0 , \cC_0] \otimes 1 + 1 \otimes [\cB_0 , \cB_0] \\
= & \cC_0 \otimes 1
\end{split}
\]
where the last equality follows from the fact that $\dm(\cB_0)=1$.
Take $a= c \otimes 1 + 1 \otimes b \in \cS$ with $c \in \cC_0$ and $b \in \cB_0$, then
\[
\begin{split}
a\in Z(\cS) & \Leftrightarrow [a,s] = 0 \mbox{ for all $s \in \cS$}\\
& \Leftrightarrow [c \otimes 1 + 1 \otimes b, s_1 \otimes 1 + 1 \otimes s_2]= 0
 \mbox{ for all $s_1 \in \cC_0$ and $s_2 \in \cB_0$} \\
& \Leftrightarrow [c, s_1] \otimes 1 = 0 \mbox{ for all $s_1 \in \cC_0$. }
\end{split}
\]
Since $\cC_0$ is simple under the commutator, $Z(\cC_0)= \lbrace 0 \rbrace$. Therefore $Z(\cS)= 1 \otimes \cB_0$.
By Lemma \ref{le:grad.subspaces} c) and d), $[\cS, \cS]= \cC_0 \otimes 1$ and
$Z(S) = 1 \otimes \cB_0$ are
$G$-graded subspaces of $\cS$ with the gradings induced by $\Gamma_{\cS}$:
\[
\Gamma_{\cC_0 \otimes 1}:= \Gamma_{\cS} |_{\cC_0 \otimes 1} \mbox{ and }
\Gamma_{1 \otimes \cB_0}:= \Gamma_{\cS} |_{1 \otimes \cB_0}.
\]
Consider the following isomorphisms
\[
1) \hspace*{0.5cm}
\begin{array}{llll}
\varphi_1: & \cC_0 \otimes 1 & \rightarrow  & \cC_0 \\
   & x \otimes 1 & \mapsto &  x
\end{array}
\mbox{ and }
\begin{array}{llll}
\varphi_2: & 1 \otimes \cB_0 & \rightarrow  & \cB_0 \\
   & 1 \otimes y & \mapsto &  y.
\end{array}
\]
Then we have gradings on $\cC_0$ and $\cB_0$ given by
\[
2) \hspace*{0.5cm}
\Gamma_{\cC_0}: \cC_0 = \bigoplus_{g \in G} (\cC_0)_g, \mbox{ where } (\cC_0)_g= \varphi_1
((\cC_0 \otimes 1)_g)
\]
and
\[
3) \hspace*{0.5cm}
\Gamma_{\cB_0}: \cB_0 = \bigoplus_{g \in G} (\cB_0)_g, \mbox{ where } (\cB_0)_g= \varphi_2
((1 \otimes \cB_0)_g).
\]
By \cite[Corollary 4.25]{EKmon}, the decomposition
\[
\Gamma_{\cC}: \cC = \bigoplus_{g \in G} \cC_g
\]
where $\cC_e := \FF1 \oplus (\cC_0)_e$ and $\cC_g := (\cC_0)_g$ for $g \neq e$, where $e$ is the neutral element of $G$, is a $G$-grading on $\cC$. We have the $G$-grading on $\cB$
\[
\Gamma_{\cB}: \cB = \bigoplus_{g \in G} \cB_g
\]
given by $\cB_e = \FF 1$ and $\cB_g= (\cB_0)_g$, notice that $g^2 =e$ for $g\in \Supp \Gamma_{\cB_0}$.

Now suppose $\dim (\cB)=4$. By Lemma \ref{le:graded.ideals}, $\cC_0 \otimes 1$ and $1 \otimes \cB_0$ are graded. Consider the gradings (induced from the grading on $\cS$):
\[
\Gamma_{\cC_0 \otimes 1}:= \Gamma_{\cS} |_{\cC_0 \otimes 1} \hspace*{0.2cm} \mbox{ and }\hspace*{0.2cm}
\Gamma_{1 \otimes \cB_0}:= \Gamma_{\cS} |_{1 \otimes \cB_0}.
\]
Consider again the isomorphisms $\varphi_1$ and $\varphi_2$ from 1) and we have
the $G$-gradings on $\cC_0 \otimes 1$ and $1 \otimes \cB_0$ given by 2) and 3), respectively.
By \cite[Corollary 4.25]{EKmon} we have that the decompositions
\[
\Gamma_{\cC}: \cC = \bigoplus_{g \in G} \cC_g \hspace*{0.2cm}\mbox{ and } \hspace*{0.2cm} \Gamma_{\cB}: \cB = \bigoplus_{g \in G} \cB_g,
\]
where $\cC_e := \FF1 \oplus (\cC_0)_e$, $\cB_e := \FF1 \oplus (\cB_0)_e$, $\cC_g := (\cC_0)_g$ and $\cB_g := (\cB_0)_g$ for $g \neq e$, are $G$-gradings. Now we will prove that, effectively
\[
(\cC \otimes \cB)_g = \bigoplus\limits_{g_1, g_2 \in G : g_1 g_2 = g} \cC_{g_1} \otimes
\cB_{g_2}.
\]
For $h,k \in G$:
\begin{itemize}
\item $\cC_{h} \otimes \cB_{k} = (\cC_0)_{h} \otimes (\cB_0)_{k} = (\cC_0 \otimes
1)_{h} (1 \otimes \cB_0)_{k} \subseteq (\cC \otimes \cB)_{h} (\cC \otimes \cB)_{k}
\subseteq (\cC \otimes \cB)_{hk}$, if $h \neq e \neq k$;

\item $\cC_{h} \otimes \cB_{k} =
(\FF 1 \oplus (\cC_0)_e) \otimes (\cB_0)_{k}  = ((\FF 1 \oplus (\cC_0)_e) \otimes 1) (1 \otimes \cB_0)_{k} = ((\FF 1 \otimes 1) \oplus ((\cC_0)_e \otimes 1)) (1 \otimes \cB_0)_{k}
 \subseteq (\cC \otimes \cB)_{e} (\cC \otimes \cB)_{k} \subseteq (\cC \otimes
\cB)_{k}$, if $h = e$ and $k \neq e$;

\item $\cC_{h} \otimes \cB_{k} =
(\cC_0)_h \otimes (\FF 1 \oplus (\cB_0)_e) = (\cC_0 \otimes 1)_{h} (1 \otimes (\FF 1 \oplus
(\cB_0)_e)) = (\cC_0 \otimes 1)_{h} ((1 \otimes \FF 1) \oplus (1 \otimes (\cB_0)_e))
\subseteq (\cC \otimes \cB)_{h} (\cC \otimes \cB)_{e} \subseteq (\cC \otimes
\cB)_{h}$ if $h \neq e$ and $k = e$.

\end{itemize}
Therefore $\cC_{h} \otimes \cB_{k} \subseteq (\cC \otimes \cB)_{hk}$. Since
\[
\bigoplus_{h,k \in G} \cC_{h} \otimes \cB_{k} = \cC \otimes \cB = \bigoplus_{g \in G}
(\cC \otimes \cB)_{g}
\]
and
\[
\bigoplus_{h,k \in G : hk=g}\cC_{h} \otimes \cB_{k} \subseteq (\cC \otimes \cB)_{g},
\]
we get that
\[
\cA_g = \bigoplus\limits_{g_1, g_2 \in G : g_1 g_2 = g} \cC_{g_1} \otimes \cB_{g_2}.
\]
$\Leftarrow$)
This defines a grading on a tensor product of algebras (see \cite[Chapter 1, Section 1]{EKmon}).

For the last part consider a graded isomorphism $\psi: \cC\otimes \cB\rightarrow \cC\otimes \cB$. Then the restriction to $\cS$, $\psi: \cC_0 \otimes 1 \oplus 1\otimes \cB_0\rightarrow \cC_0 \otimes 1 \oplus 1\otimes \cB_0$ is a graded isomorphism as well. Notice that $\cC_0 \otimes 1$ and $1\otimes \cB_0$ are the only simple ideals of $\cS$. Since every isomorphism sends simple ideals to simple ideals and $\cC_0 \otimes 1$ and $1\otimes \cB_0$ are graded then we can restrict $\psi$ again and get graded isomorphisms $\cC_0\rightarrow \cC_0$ and $\cB_0\rightarrow \cB_0$.

\end{proof}

Assume the base field is algebraically closed. The following gradings are the only fine gradings, up to equivalence, on the tensor product of a Cayley algebra $\cC$ and a Hurwitz algebra $\cB$ of dimension 2 and 4. This follows from Theorem \ref{th:grad.tensor.prod.dim=2,4} and the fact that they are gradings by their universal groups.

Suppose $\dm(\cB)=2$ and take $\cK:=\cB$.
\begin{enumerate}
\item The $(\Z/2)^4$-grading formed by the $(\Z/2)^3$-grading induced by the Cayley-Dickson doubling process on $\cC$ (with basis $\{ 1,u,v,w,uv,uw,vw,(uv)w \}$ and homogeneous components given by Equation \eqref{eq:doubling.process.grading}) and the $\Z/2$-grading induced by the Cayley-Dickson doubling process on $\cK=CD(\FF,\alpha)$ $=\FF 1\oplus\FF u$ for $\alpha\in\FF$ (with basis $\{ 1,u\}$ and homogeneous components given by Equation \eqref{eq:C-D.grading.dim2}), with homogeneous components given by
\[
(\cC\otimes\cK)_{(g,h)}=\cC_g \otimes \cK_h, \hspace*{0.2cm} g\in (\Z/2)^3, \hspace*{0.2cm} h \in \Z/2.
\]
\item The $\Z^2\times\Z/2$-grading formed by the Cartan $\Z^2$-grading on $\cC$ (with basis  $\{ e_1,e_2,u_1,u_2,u_3,v_1,v_2,v_3 \}$ and homogeneous components given by Equation \eqref{eq:cartan.grading}) and the $\Z/2$-grading induced by the Cayley-Dickson doubling process on $\cK=CD(\FF,\alpha)=\FF 1\oplus\FF u$ for $\alpha\in\FF$ (with basis $\{ 1,u\}$ and homogeneous components given by Equation \eqref{eq:C-D.grading.dim2}), with homogeneous components given by
\[
(\cC\otimes\cK)_{(g,h)}=\cC_g \otimes \cK_h, \hspace*{0.2cm} g\in \Z^2, \hspace*{0.2cm} h \in \Z/2.
\]
\end{enumerate}

Suppose $\dm(\cB)=4$ and take $\cH:=\cB$.
\begin{enumerate}
\item The $(\Z/2)^5$-grading formed  by the $(\Z/2)^3$-grading induced by the Cayley-Dickson doubling process on $\cC$ (with basis $\{ 1,u,v,w,uv,uw,vw,(uv)w \}$ and homogeneous components given by Equation \eqref{eq:doubling.process.grading}) and the $(\Z/2)^2$-grading induced by the Cayley-Dickson doubling process on $\cH$ (with basis $\{ 1,u,v,uv\}$ and homogeneous components given by \eqref{eq:C-D.grading.quaternions}), with homogeneous components given by
\[
(\cC\otimes\cH)_{(g,h)}=\cC_g \otimes \cH_h, \hspace*{0.2cm} g\in (\Z/2)^3, \hspace*{0.2cm} h \in (\Z/2)^2.
\]
\item The $(\Z/2)^3\times\Z$-grading formed by the $(\Z/2)^3$-grading induced by the Cayley-Dickson doubling process on $\cC$ (with basis $\{ 1,u,v,w,uv,uw,vw,$ $(uv)w \}$ and homogeneous components given by Equation \eqref{eq:doubling.process.grading}) and the Cartan $\Z$-grading on $\cH$ (with basis $\{ e_1,e_2,u_1,v_1\}$ and homogeneous components given by Equation \eqref{eq:cartan.grading.quaternions}), with homogeneous components given by
\[
(\cC\otimes\cH)_{(g,h)}=\cC_g \otimes \cH_h, \hspace*{0.2cm} g\in (\Z/2)^3, \hspace*{0.2cm} h \in \Z.
\]
\item The $\Z^2\times(\Z/2)^2$-grading formed by the Cartan $\Z^2$-grading on $\cC$ (with basis $\lbrace e_1,e_2,u_1,u_2,u_3,v_1,v_2,v_3 \rbrace$ and homogeneous components given by Equation \eqref{eq:cartan.grading}) and the $(\Z/2)^2$-grading induced by the Cayley-Dickson doubling process on $\cH$ (with basis $\lbrace 1,u,v,uv\rbrace$ and homogeneous components given by Equation \eqref{eq:C-D.grading.quaternions}), with homogeneous components given by
\[
(\cC\otimes\cH)_{(g,h)}=\cC_g \otimes \cH_h, \hspace*{0.2cm} g\in \Z^2, \hspace*{0.2cm} h \in (\Z/2)^2.
\]
\item The $\Z^3$-grading formed by the Cartan $\Z^2$-grading on $\cC$ (with basis $\{ e_1,e_2,$

\noindent $u_1,u_2,u_3,v_1,v_2,v_3 \}$ and homogeneous components given by Equation \eqref{eq:cartan.grading}) and the Cartan $\Z$-grading on $\cH$ (with basis $\{ e_1,e_2,u_1,v_1\}$ and homogeneous components given by Equation \eqref{eq:cartan.grading.quaternions}), with homogeneous components given by
\[
(\cC\otimes\cH)_{(g,h)}=\cC_g \otimes \cH_h, \hspace*{0.2cm} g\in \Z^2, \hspace*{0.2cm} h \in \Z.
\]
\end{enumerate}

\subsection{The direct product of two Cayley algebras}\label{subsection.Tensor.direct.product}

In order to find the gradings on the tensor product of two Cayley algebras we will start by finding gradings on their direct product. Then we will use an isomorphism of schemes to obtain gradings on the tensor product.
In this section we will use some results from \cite{CE18} and \textit{assume that the base field is algebraically closed}, recall that in this case any finite-dimensional simple algebra is automatically central-simple (this is a consequence of \cite[Theorem 10.1]{Jac78}).

\vspace{0.2cm}
Consider the following notation:
\begin{itemize}
\item Let $\gamma=(g_1, g_2, g_3)$ be a triple of elements in a group $G$ with $g_1 g_2 g_3 =e$. Denote by
$\Gamma^1_{\cC} (G,\gamma)$ the $G$-grading on $\cC$ induced from $\Gamma^1_{\cC}$ by the
homomorphism $\Z^2 \rightarrow G$ sending $(1,0)$ to $g_1$ and $(0,1)$ to $g_2$. For two such
triples, $\gamma$ and $\gamma'$ we will write $\gamma \sim \gamma'$ if there exists $\pi \in Sym(3)$
such that $g'_i = g_{\pi(i)}$ for all $i=1,2,3$ or $g'_i = g^{-1}_{\pi(i)}$ for all $i=1,2,3$.

\item Let $H \subset G$ be a subgroup isomorphic to $(\Z/2)^3$. Then $\Gamma^2_{\cC}$ may be regarded as a $G$-grading with support $H$. We denote this $G$-grading by $\Gamma^2_{\cC} (G, H)$.
\end{itemize}

\cite[Theorems 3.7 and 4.1]{CE18} show that given any abelian group $G$, any $G$-grading on $\cC\times\cC$ making it a graded-simple algebra (i.e., the two copies of $\cC$ are not graded ideals) is isomorphic to the grading on a loop algebra $L_\pi(\cC)$, where $\pi:G\rightarrow \overline{G}$ is a surjective group homomorphism with $\ker\pi$ of order $2$ ($\ker\pi=\langle h\rangle$ with $h$ of order $2$) obtained from a grading $\overline{\Gamma}$ on $\cC$. We will denote $\overline{g}:=\pi(g)$ for $g\in G$. The loop algebra is isomorphic to $\cC\times\cC$ by means of the isomorphism in \cite[Theorem 3.7]{CE18} which allows us to transfer easily the grading on $L_\pi(\cC)$ to $\cC\times\cC$.

If $\overline{\Gamma}$ is isomorphic to $\Gamma^1_{\cC} (\overline{G},\overline{\gamma})$, for a triple of elements $\overline{\gamma}=(\bar{g}_1,\bar{g}_2,\bar{g}_3)$ in $\overline{G}$, the corresponding grading on $\cC\times\cC$ will be denoted by $\Gamma^1_{\cC\times\cC}(G,h,\overline{\gamma})$. While if $\overline{\Gamma}$ is isomorphic to $\Gamma^2_{\cC} (\overline{G}, \overline{H})$ for $\overline{H}:=\pi(H)$, where $H$ is a subgroup of $G$ such that $\overline{H}$ is isomorphic to $(\Z/2)^3$, the corresponding grading on $\cC\times\cC$ will be denoted by $\Gamma_{\cC\times\cC}^2(G,h,\overline{H})$.

The gradings $\Gamma^1_{\cC\times\cC}(G,h,\overline{\gamma})$ and $\Gamma_{\cC\times\cC}^2(G,h,\overline{H})$ are quite simple to describe if the surjective group homomorphism $\pi:G\rightarrow \overline{G}$ splits. That is, there is a section $s:\overline{G}\rightarrow G$ of $\pi$ which is a group homomorphism. In this case, $G=s(\overline{G})\times \langle h \rangle$ and the nontrivial character on $\ker \pi= \langle h \rangle$ ($\chi(h)=-1$) extends to a character $\chi$ on $G$ with $\chi(g)=1$ for any $g\in s(\overline{G})$. The isomorphism in \cite[Theorem 3.7]{CE18} becomes the isomorphism
\[
\begin{split}
\Phi:L_\pi(\cC)&\longrightarrow \cC\times\cC\\
    x\otimes g&\mapsto \ \bigl(x,\chi(g)x\bigr)
\end{split}
\]
for $g\in G$ and  $x\in \cC_{\pi(g)}$. Thus, with $g_i=s(\overline{g_i})$ for $i=1,2,3$, the $G$-grading $\Gamma^1_{\cC\times\cC}(G,h,\overline{\gamma})$ is determined by:
\begin{equation}\label{eq:gamma1_h}
\begin{array}{ll}
(\cC\times\cC)_{e}=\FF (e_1,e_1) \oplus \FF (e_2,e_2), &(\cC\times\cC)_{h}=\FF (e_1,-e_1) \oplus \FF (e_2,-e_2),\\
(\cC\times\cC)_{g_1}=\FF (u_1,u_1), & (\cC\times\cC)_{g_1 h}=\FF (u_1,-u_1),\\
(\cC\times\cC)_{g_2}=\FF (u_2,u_2), & (\cC\times\cC)_{g_2 h}=\FF (u_2,-u_2),\\
(\cC\times\cC)_{g_1 g_2}=\FF (v_3,v_3), & (\cC\times\cC)_{g_1 g_2 h}=\FF(v_3,-v_3),\\
(\cC\times\cC)_{g_1^{-1}}=\FF (v_1,v_1), & (\cC\times\cC)_{g_1^{-1}h}=\FF (v_1,-v_1),\\
(\cC\times\cC)_{g_2^{-1}}=\FF (v_2,v_2), & (\cC\times\cC)_{g_2^{-1}h}=\FF (v_2,-v_2),\\
(\cC\times\cC)_{(g_1 g_2)^{-1}}=\FF (u_3,u_3), & (\cC\times\cC)_{(g_1 g_2)^{-1}h}=\FF (u_3,-u_3).
\end{array}
\end{equation}
And the $G$-grading $\Gamma_{\cC\times\cC}^2(G,h,\overline{H})$ is determined by:
\begin{equation}\label{eq:gamma2_h}
\begin{array}{ll}
(\cC\times\cC)_{e}=\FF (1,1), & (\cC\times\cC)_{h}=\FF(1,-1),\\
(\cC\times\cC)_{g_1}=\FF (u,u), & (\cC\times\cC)_{g_1 h}=\FF (u,-u),\\
(\cC\times\cC)_{g_2}=\FF (v,v), & (\cC\times\cC)_{g_2 h}=\FF (v,-v),\\
(\cC\times\cC)_{g_3}=\FF (w,w), & (\cC\times\cC)_{g_3 h}=\FF (w,-w),\\
(\cC\times\cC)_{g_1 g_2}=\FF (uv,uv), & (\cC\times\cC)_{g_1 g_2 h}=\FF (uv,-uv),\\
(\cC\times\cC)_{g_1 g_3}=\FF (uw,uw), & (\cC\times\cC)_{g_1 g_3 h}=\FF (uw,-uw),\\
(\cC\times\cC)_{g_2 g_3}=\FF (vw,vw), & (\cC\times\cC)_{g_2 g_3 h}=\FF (vw,-vw),\\
(\cC\times\cC)_{g_1 g_2 g_3}=\FF ((uv)w,(uv)w), & (\cC\times\cC)_{g_1 g_2 g_3 h}=\FF ((uv)w,-(uv)w)
\end{array}
\end{equation}
where $\overline{H}=\langle \bar{g}_1,\bar{g}_2,\bar{g}_3 \rangle$.

The following result gives the classification of gradings, up to isomorphism, on $\cC\times \cC$ where it is graded-simple.

\begin{theorem}\label{th:iso.CxC.graded-simple}
Any $G$-grading $\Gamma$ by a group $G$ on the cartesian product $\cC\times\cC$ of two
Cayley algebras, such that $\cC\times\cC$ is graded-simple, is isomorphic to either $\Gamma^1_{\cC\times\cC}(G,h,\overline{\gamma})$ or $\Gamma_{\cC\times\cC}^2(G,h,\overline{H})$ (for an element $h$ in $G$ of order 2, a triple $\overline{\gamma}=(\bar{g}_1, \bar{g}_2, \bar{g}_3)$ of elements of $\overline{G}=G/\langle h \rangle$ and a subgroup $\overline{H} \subset \overline{G}$ isomorphic to $(\Z/2)^3$). Moreover, no grading of the first type is isomorphic to one of the second type and
\begin{itemize}
\item $\Gamma^1_{\cC\times\cC}(G,h,\overline{\gamma})$ is isomorphic to $\Gamma^1_{\cC\times\cC}(G,h',\overline{\gamma}')$ if and only if $h=h'$ and $\overline{\gamma}\sim\overline{\gamma}'$.

\item $\Gamma_{\cC\times\cC}^2(G,h,\overline{H})$ is isomorphic to $\Gamma_{\cC\times\cC}^2(G,h',\overline{H}')$ if and only if $h=h'$ and $\overline{H}=\overline{H}'$.
\end{itemize}
\end{theorem}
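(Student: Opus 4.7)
The plan is to reduce the classification of graded-simple gradings on $\cC \times \cC$ to the classification of gradings on a single Cayley algebra $\cC$, using the loop algebra machinery already quoted from \cite{CE18}. First, given any graded-simple $G$-grading $\Gamma$ on $\cC \times \cC$, I would invoke \cite[Theorems 3.7 and 4.1]{CE18} to realize $\Gamma$ (up to isomorphism) as the grading on a loop algebra $L_{\pi}(\cC, \overline{\Gamma})$, where $\pi \colon G \to \overline{G}$ is a surjective group homomorphism with $\ker \pi = \langle h \rangle$ of order $2$, and $\overline{\Gamma}$ is some $\overline{G}$-grading on $\cC$. Since every grading on $\cC$ is a coarsening of one of the two fine gradings $\Gamma^1_{\cC}$ or $\Gamma^2_{\cC}$ (see Section~\ref{subsection.Hurwitz}), the grading $\overline{\Gamma}$ is induced either from the Cartan grading by a homomorphism $\Z^2 \to \overline{G}$ (giving a triple $\overline{\gamma} = (\bar g_1, \bar g_2, \bar g_3)$ with $\bar g_1 \bar g_2 \bar g_3 = e$), or from the Cayley--Dickson grading by an inclusion $(\Z/2)^3 \hookrightarrow \overline{G}$ whose image is a subgroup $\overline{H}$. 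This yields the two families $\Gamma^1_{\cC \times \cC}(G, h, \overline{\gamma})$ and $\Gamma^2_{\cC \times \cC}(G, h, \overline{H})$.

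Next I would show that the two types are never isomorphic by computing the dimension of the identity homogeneous component. Inspecting the explicit formulas \eqref{eq:gamma1_h} and \eqref{eq:gamma2_h} (and the obvious extension to the non-split case via the loop algebra), one sees that $\dim (\cC \times \cC)_{e} = 2$ for any grading of the first type, while $\dim(\cC \times \cC)_{e} = 1$ for any grading of the second type; since isomorphisms of graded algebras preserve the dimension of each homogeneous component, the two types are disjoint.

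For the isomorphism criteria within each type, I would first observe that under a graded isomorphism $\varphi \colon \cC \times \cC \to \cC \times \cC$ carrying $\Gamma^i_{\cC \times \cC}(G, h, \cdot)$ to $\Gamma^i_{\cC \times \cC}(G, h', \cdot)$, the element $h$ is intrinsically determined: it is the unique nontrivial element of the support on which the corresponding piece of $L_\pi(\cC)$ collapses to the same underlying homogeneous subspace as the identity piece (equivalently, $\langle h \rangle = \ker \pi$ is recovered from $\Gamma$ as the stabilizer of the decomposition of $\cC \times \cC$ into its two simple ideals via the loop algebra isomorphism). Hence $h = h'$ is forced, and then $\pi = \pi'$ is the same homomorphism, so an isomorphism of gradings on $\cC \times \cC$ corresponds (via \cite[Theorem 3.7]{CE18}) to an isomorphism of the underlying $\overline{G}$-gradings $\overline{\Gamma}$ and $\overline{\Gamma}'$ on $\cC$. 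Invoking the known classification of Cartan-type gradings on $\cC$ up to isomorphism (the triple condition $\overline{\gamma} \sim \overline{\gamma}'$) and of Cayley--Dickson-type gradings on $\cC$ (determined by the support subgroup $\overline{H}$), I obtain the stated criteria.

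The main obstacle I expect is the verification that $h$ is genuinely an invariant of the grading, rather than a choice made in the loop algebra presentation. This is not formal: one must track carefully how the central idempotents $(1,0)$ and $(0,1)$, and their graded analogues $(1,1), (1,-1)$ in the split case, encode $\ker \pi$; in the non-split case one must argue via the Cartier dual action of $h$ on $\cC \times \cC$ and the fact that this action swaps the two simple components (so $h$ is the unique element inducing this swap under the $G^D$-action corresponding to $\Gamma$). Once this invariance is secured, the rest of the proof reduces to citing the classification on the single Cayley algebra $\cC$.
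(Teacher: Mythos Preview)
Your approach is essentially the same as the paper's: both reduce to the loop algebra realization from \cite[Theorems~3.7 and~4.1]{CE18} and then to the known classification of gradings on a single Cayley algebra. The paper, however, short-circuits your hands-on treatment of the invariance of $h$ and of the within-type isomorphism criteria by invoking \cite[Theorem~3.5~(5)]{CE18} directly: that result already states that two loop gradings $L_\pi(\cC,\overline{\Gamma})$ and $L_{\pi'}(\cC,\overline{\Gamma}')$ are isomorphic as $G$-graded algebras if and only if $\ker\pi=\ker\pi'$ (so $h=h'$) and the underlying $\overline{G}$-gradings on $\cC$ are isomorphic. This disposes at once of the obstacle you flagged, and the paper then finishes by citing \cite[Theorem~4.21]{EKmon} for the isomorphism classes of gradings on $\cC$. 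Your proposed argument for the invariance of $h$ (via the swap of the two simple ideals under the $G^D$-action) is exactly what underlies that cited result, so you are re-deriving rather than bypassing it.

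One small imprecision: for gradings of the first type you write $\dim(\cC\times\cC)_e = 2$, but in fact $\dim(\cC\times\cC)_e = \dim \cC_{\bar e} \geq 2$, with equality only when the induced Cartan-type grading on $\cC$ is fine; if some $\bar g_i$ is trivial the dimension is larger. This does not affect your conclusion, since $\geq 2$ versus $=1$ still separates the two types. The paper does not spell out this dimension argument but leaves the non-isomorphism of the two types implicit in the combination of \cite[Theorem~3.5~(5)]{CE18} and the fact that no Cartan-type grading on $\cC$ is isomorphic to the Cayley--Dickson grading.
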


\begin{proof}
By \cite[Theorem 3.5 (5)]{CE18} we have that $\Gamma^1_{\cC\times\cC}(G,h,\overline{\gamma})$ is isomorphic to $\Gamma^1_{\cC\times\cC}(G,h',\overline{\gamma}')$ if and only if $h=h'$ and the associated $\overline{G}$-gradings on $\cC$, that is $\Gamma^1_{\cC}(\overline{G},\overline{\gamma})$ and $\Gamma^1_{\cC}(\overline{G},\overline{\gamma}')$ are isomorphic, which occurs if and only if $\overline{\gamma}\sim\overline{\gamma}'$ (\cite[Theorem 4.21]{EKmon}). The  proof for the grading of second type is analogous.
\end{proof}

Next result gives the classification of gradings, up to isomorphism, on $\cC\times \cC$ where it is not graded-simple.

\begin{proposition}\label{pr:product.grading.isomorphism.CxC}
Let $G$ be a group and let $\Gamma$ be a $G$-grading on the product of two Cayley algebras $\cC\times\cC$ such that it is not graded-simple, i.e., $\cC\times 0$ and $0\times\cC$ are graded ideals. Then $\Gamma$ is isomorphic to a product $G$-grading $\Gamma^1\times_G\Gamma^2$ for some $G$-gradings $\Gamma^1$ and $\Gamma^2$ on $\cC$.

Let $\Gamma^1$, $\Gamma^2$, $\Gamma'^1$ and $\Gamma'^2$ be $G$-gradings on $\cC$.
Then, the product $G$-gradings $\Gamma^1\times_G\Gamma^2$ and $\Gamma'^1\times_G\Gamma'^2$ are isomorphic if and only if $\Gamma^1 \simeq \Gamma'^1$ and $\Gamma^2 \simeq \Gamma'^2$ or $\Gamma^1 \simeq \Gamma'^2$ and $\Gamma^2 \simeq \Gamma'^1$.
\end{proposition}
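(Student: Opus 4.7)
The first claim is essentially a book-keeping exercise. Since $\cC\times 0$ and $0\times\cC$ are graded ideals with $\cC\times\cC = (\cC\times 0)\oplus (0\times\cC)$, the $G$-grading $\Gamma$ decomposes as the direct sum of its restrictions to these ideals. Transferring these restrictions across the canonical isomorphisms $\cC\times 0 \xrightarrow{\sim} \cC$, $(c,0)\mapsto c$, and $0\times\cC \xrightarrow{\sim} \cC$, $(0,c)\mapsto c$, yields two $G$-gradings $\Gamma^1$ and $\Gamma^2$ on $\cC$ such that, for every $g\in G$,
\[
(\cC\times\cC)_g = \cC^1_g \times \cC^2_g,
\]
which is precisely the product grading $\Gamma^1\times_G\Gamma^2$. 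Hence the identity map is a graded isomorphism $\Gamma \to \Gamma^1\times_G\Gamma^2$.

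For the second claim, the ($\Leftarrow$) direction is straightforward: given graded isomorphisms $\varphi^i\colon \Gamma^i\to\Gamma'^i$ for $i=1,2$, the map $(c_1,c_2)\mapsto (\varphi^1(c_1),\varphi^2(c_2))$ is a graded isomorphism $\Gamma^1\times_G\Gamma^2 \to \Gamma'^1\times_G\Gamma'^2$, and the swapped case is handled similarly by composing with the coordinate swap $(c_1,c_2)\mapsto (c_2,c_1)$, which is a (ungraded) algebra automorphism of $\cC\times\cC$.

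The ($\Rightarrow$) direction is where the argument takes some care. Suppose $\varphi\colon \cC\times\cC \to \cC\times\cC$ is a graded isomorphism between $\Gamma^1\times_G\Gamma^2$ and $\Gamma'^1\times_G\Gamma'^2$. The minimal ideals of $\cC\times\cC$ are exactly $\cC\times 0$ and $0\times\cC$ (since $\cC$ is simple), and any algebra isomorphism must permute them. Thus either $\varphi(\cC\times 0) = \cC\times 0$ and $\varphi(0\times\cC) = 0\times\cC$, or $\varphi$ swaps these two ideals. Since $\varphi$ is graded, in the first case its restrictions give graded isomorphisms $\Gamma^1\to\Gamma'^1$ and $\Gamma^2\to\Gamma'^2$ after identifying the ideals with $\cC$; in the second case we obtain graded isomorphisms $\Gamma^1\to\Gamma'^2$ and $\Gamma^2\to\Gamma'^1$ in the same way.

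The only substantive point is the permutation of minimal ideals, which follows from the standard fact that algebra homomorphisms send ideals to ideals (and here there are only two minimal ones), so no real obstacle appears; the proof is essentially a direct verification once this observation is made.
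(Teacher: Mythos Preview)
Your proof is correct and essentially provides the explicit details that the paper omits: the paper's own proof merely says ``First assertion follows from \cite[Theorem 4.1 (1)]{CE18}. Second assertion is trivial.'' Your argument for the first claim (restricting to the two graded ideals and transferring along the canonical projections) is exactly what underlies that citation, and your treatment of the second claim---permuting the two minimal ideals $\cC\times 0$ and $0\times\cC$ under an algebra isomorphism and restricting---is precisely the content of ``trivial.'' One minor wording point: in the last paragraph you should say algebra \emph{isomorphisms} send (minimal) ideals to (minimal) ideals, not homomorphisms in general, but since $\varphi$ is an isomorphism this does not affect the argument.
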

\begin{proof}
First assertion follows from \cite[Theorem 4.1 (1)]{CE18}. Second assertion is trivial.
\end{proof}

Notice that the fine grading $\Gamma_{\cC}^1(\Z^2,((1,0),(0,1),(-1,-1)))$ is precisely  $\Gamma_{\cC}^1$ and the fine grading $\Gamma_{\cC}^2$ is $\Gamma^2_{\cC}(G,H)$ with $G=H=\bigl(\Z/2\bigr)^3$.

Finally we obtain the fine gradings on $\cC\times \cC$ up to equivalence.

\begin{proposition}\label{pr:CxC.equivalence}
Up to equivalence, the fine gradings on $\cC\times\cC$ are:
\begin{enumerate}
\item The product grading $\Gamma^1_{\cC} \times \Gamma^1_{\cC}$ by its universal group $\Z^2 \times \Z^2 \simeq \Z^4$.

\item The product grading $\Gamma^1_{\cC} \times \Gamma^2_{\cC}$ by its universal group $\Z^2 \times (\Z/2)^3$.

\item The product grading $\Gamma^2_{\cC} \times \Gamma^2_{\cC}$ by its universal group $(\Z/2)^3 \times (\Z/2)^3 \simeq (\Z/2)^6$.

\item The grading $\Gamma_{\cC\times\cC}^1(\Z/2\times\Z^2,(\bar{1},0,0),((1,0),(0,1),(-1,-1)))$ with universal group $U=\Z/2\times\Z^2$. The group $\overline{U}=U/\langle (\bar{1},0,0)\rangle$ is identified naturally with $\Z^2$. This grading is determined explicitly using Equation \eqref{eq:gamma1_h}.

\item The grading $\Gamma_{\cC\times\cC}^2\Bigl(\bigl(\Z/2\bigr)^4,(\bar 1,\bar 0,\bar 0,\bar 0),\bigl(\Z/2\bigr)^3\Bigr)$ with universal group $U=\bigl(\Z/2\bigr)^4$. Here the group $\overline{U}=U/\langle (\bar 1,\bar 0,\bar 0,\bar 0)\rangle$ is identified with $\bigl(\Z/2\bigr)^3$. This grading is determined explicitly using Equation \eqref{eq:gamma2_h}.

\item The grading $\Gamma_{\cC\times\cC}^2(\Z/4\times(\Z/2)^2,(\widehat{2},\bar 0,\bar 0),(\Z/2)^3)$. Here we denote by $\widehat{m}$ the class of the integer $m$ modulo $4$ and restrict the usual notation $\bar{m}$ for the class of $m$ modulo $2$. The surjective group homomorphism $\pi$ is the canonical homomorphism $\Z/4\times(\Z/2)^2\rightarrow (\Z/2)^3$, $(\widehat{m},\bar n,\bar r)\mapsto (\bar{m},\bar n,\bar r)$.

Let us give a precise description of this grading. The nontrivial character $\chi$ on $\langle h=(\widehat{2},\bar 0,\bar 0)\rangle$ extends to the character $\chi$ on $U=\Z/4\times(\Z/2)^2$ by $\chi(\widehat{m},\bar n,\bar r)=\mathbf{i}^m$, where $\mathbf{i}$ denotes a square root of $-1$ in $\FF$.

The grading on the loop algebra $L_\pi(\cC)$ is given by
\[
L_{\pi}(\cC)_{(\widehat{m},\bar n,\bar r)}= (\cC)_{(\bar{m},\bar n,\bar r)} \otimes (\widehat{m},\bar n,\bar r)
\]
for the homogeneous components $\cC_{(\bar{m} ,\bar n,\bar r)}$ in Equation \eqref{eq:doubling.process.grading}, and through the isomorphism in \cite[Theorem 3.7]{CE18} our grading
\[
\Gamma_{\cC\times\cC}^2(\Z/4\times(\Z/2)^2,(\widehat{2},\bar 0,\bar 0),(\Z/2)^3)
\]
on $\cC\times\cC$ is given by
\[
(\cC\times\cC)_{(\widehat{m},\bar n, \bar r)}=\{(x,\mathbf{i}^mx)\mid x\in \cC_{(\bar{m},\bar n,\bar r)}\}.
\]
\end{enumerate}
\end{proposition}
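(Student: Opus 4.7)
The plan is to split according to whether the fine grading makes $\cC\times\cC$ graded-simple or not, and then to apply the previous classification results in each case.

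If the grading is not graded-simple, Proposition~\ref{pr:product.grading.isomorphism.CxC} presents it, up to isomorphism, as a product $\Gamma^1\times_G\Gamma^2$ of $G$-gradings on the two copies of $\cC$. Fineness of a product grading is equivalent to fineness of both factors (a refinement of either factor gives a refinement of the product, and any refinement of the product restricts to refinements of the factors). Up to equivalence the fine gradings on $\cC$ are $\Gamma^1_\cC$ and $\Gamma^2_\cC$, with universal groups $\Z^2$ and $(\Z/2)^3$. After absorbing the swap of the two copies of $\cC$ into the equivalence, this gives exactly the three cases (1), (2), and (3), with the stated universal groups.

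If the grading is graded-simple, Theorem~\ref{th:iso.CxC.graded-simple} writes it (up to isomorphism) as $\Gamma^1_{\cC\times\cC}(G,h,\overline\gamma)$ or $\Gamma^2_{\cC\times\cC}(G,h,\overline H)$, obtained via the loop construction from a $\overline G$-grading $\overline\Gamma$ on $\cC$ with $\overline G=G/\langle h\rangle$. Refinements of the loop algebra correspond to refinements of $\overline\Gamma$, so fineness forces $\overline\Gamma$ to be fine on $\cC$ and $\overline G$ to be its universal group. Thus up to equivalence $\overline\Gamma=\Gamma^1_\cC$ with $\overline G=\Z^2$, or $\overline\Gamma=\Gamma^2_\cC$ with $\overline G=(\Z/2)^3$, and $G$ is an abelian extension of $\overline G$ by $\Z/2$. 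For $\overline G=\Z^2$ such an extension splits uniquely (since $\Z^2$ is free abelian), giving $G=\Z/2\times\Z^2$ and case (4). For $\overline G=(\Z/2)^3$, an enumeration of the abelian groups of order $16$ admitting a quotient isomorphic to $(\Z/2)^3$ leaves only $(\Z/2)^4$ (split) and $\Z/4\times(\Z/2)^2$ (with $h$ the unique order-$2$ element of the $\Z/4$-summand); the remaining groups $\Z/4\times\Z/4$, $\Z/8\times\Z/2$ and $\Z/16$ admit no such quotient. These yield cases (5) and (6).

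Finally, the six listed gradings are pairwise inequivalent because their universal groups $\Z^4$, $\Z^2\times(\Z/2)^3$, $(\Z/2)^6$, $\Z/2\times\Z^2$, $(\Z/2)^4$ and $\Z/4\times(\Z/2)^2$ are pairwise non-isomorphic. The main delicate step is verifying that the displayed $G$ genuinely is the universal group of the constructed loop algebra grading (and not a proper quotient), which is exactly what ensures fineness on $\cC\times\cC$; this boils down to inspecting Equations~\eqref{eq:gamma1_h} and~\eqref{eq:gamma2_h} to see that all homogeneous components are one-dimensional and that the support generates $G$ subject only to the defining abelian relations.
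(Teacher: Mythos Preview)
The paper's own proof is a one-line citation to \cite[Corollary~5.8]{CE18}. Your argument is essentially an unpacking of what that corollary says, carried out via the graded-simple/non-graded-simple dichotomy; this is a legitimate and more self-contained route, and the overall structure is correct.

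There is, however, a factual slip in your final paragraph. You claim that for the loop gradings one checks that ``all homogeneous components are one-dimensional''. This is false for case~(4): looking at Equation~\eqref{eq:gamma1_h}, the components at $e$ and at $h$ are each two-dimensional (spanned by $(e_1,e_1),(e_2,e_2)$ and $(e_1,-e_1),(e_2,-e_2)$ respectively), just as the Cartan grading on $\cC$ itself has a two-dimensional neutral component. Fineness in case~(4) therefore cannot be read off from one-dimensionality; it follows instead from the fineness of $\Gamma^1_\cC$ together with the loop machinery.

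More substantially, the assertion ``refinements of the loop algebra correspond to refinements of $\overline\Gamma$, so fineness forces $\overline\Gamma$ to be fine on $\cC$ and $\overline G$ to be its universal group'' is exactly the heart of \cite[Corollary~5.8]{CE18}, and your sketch does not really prove it independently. One direction is easy (a refinement of $\overline\Gamma$ lifts via a fibre product of groups to a refinement of the loop grading), but the converse---that every refinement of a graded-simple grading arises this way, and that the displayed $G$ is genuinely the universal group---requires the analysis in \cite{CE18}. So while your argument is a faithful outline of what lies behind the citation, it does not fully replace it. A minor point you use implicitly and might state: refinement preserves graded subspaces, so the graded-simple/non-graded-simple dichotomy is stable under refinement, which is what makes your case split clean.
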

\begin{proof}
It follows from \cite[Corollary 5.8]{CE18}.
\end{proof}

\subsection{The tensor product of two Cayley algebras}\label{subsection.Tensor.tensor.product}

\textit{Assume in this section that the base field is algebraically closed}. We will generate gradings on the tensor product of two Cayley algebras $\cC\otimes\cC$ from the gradings we already know on $\cC\times\cC$. This is enough to classify gradings on $\cC\otimes\cC$ since there is a correspondence between gradings on $\cC\times\cC$ and gradings on $\cC\otimes\cC$ (Theorem \ref{th:iso.aut.schemes.cayley}).

We start by generating gradings on $\cC\otimes\cC$ from the gradings on $\cC\times\cC$ such that this product is graded-simple.
\vspace*{0.5cm}

Let $G$ be a group, let $h$ be an element in $G$ of order 2 and let $\overline{\gamma}=(\bar{g}_1, \bar{g}_2, \bar{g}_3)$ be a triple of elements in $\overline{G}:=G/\langle h \rangle$. Consider the $G$-grading $\Gamma^1_{\cC\times\cC}(G,h,\overline{\gamma})$ (on $\cC\times\cC$ such that it is graded-simple) and take the restriction to the subalgebra $\cC_0\times\cC_0$ (with the product given by the commutator, which is graded-simple too):
\[
\Gamma^1_{\cC_0\times\cC_0}(G,h,\overline{\gamma}):=\Gamma^1_{\cC\times\cC}(G,h,\overline{\gamma})\mid_{\cC_0\times\cC_0}.
\]
Then using the isomorphism
\begin{equation}\label{eq:iso.traceless}
\begin{array}{ccc}
\cC_0\times\cC_0 & \longrightarrow & (\cC_0\otimes \FF 1)\oplus (\FF 1 \otimes \cC_0)\\
(x,y) & \longmapsto & x\otimes 1 + 1 \otimes y
\end{array}
\end{equation}
we obtain a $G$-grading on $(\cC_0\otimes \FF 1)\oplus (\FF 1 \otimes \cC_0)$ (with the product given by the commutator and it is graded-simple), which we will denote by
\[
\Gamma^1_{(\cC_0\otimes \FF 1)\oplus (\FF 1 \otimes \cC_0)}(G,h,\overline{\gamma})
\]
where $\dg(x\otimes 1 + 1\otimes y)=g$ for $g\in G$ if $(x,y)\in \cC_0\times\cC_0$ is such that $\dg(x,y)=g$ in $\Gamma^1_{\cC_0\times\cC_0}(G,h,\overline{\gamma})$.
Finally, by Theorem \ref{th:iso.aut.schemes.cayley}, this last grading extends to a $G$-grading on $\cC\otimes \cC$ (with the usual product) which we denote by
\[
\Gamma^1_{\cC \otimes \cC}(G,h,\overline{\gamma}).
\]
Analogously, for an element $h$ of order 2 in $G$ and a subgroup $\overline{H} \subset \overline{G}=G/ \langle h \rangle$ isomorphic to $(\Z/2)^3$, we can construct from the grading $\Gamma_{\cC\times\cC}^2(G,h,\overline{H})$ (on $\cC\times\cC$ such that it is graded-simple) a grading on $\cC\otimes\cC$ denoted by
\[
\Gamma_{\cC\otimes\cC}^2(G,h,\overline{H}).
\]

The following result gives the classification of gradings, up to isomorphism, on $\cC\otimes \cC$ such that the graded subspace $(\cC_0\otimes\FF 1) \oplus (\FF 1\otimes\cC_0)$ is graded-simple.

\begin{corollary}\label{th:iso.CtensorC.graded-simple}
Let $\Gamma$ be a grading by a group $G$ on the tensor product $\cC\otimes\cC$ of two
Cayley algebras. Suppose that for the induced $G$-grading on the algebra $(\cC_0\otimes\FF 1) \oplus (\FF 1\otimes\cC_0)$ with the product given by the commutator (see Lemma \ref{le:grad.subspaces} b) and Definition \ref{df:structurable.algebra}) $(\cC_0\otimes\FF 1) \oplus (\FF 1\otimes\cC_0)$ is graded-simple. Then $\Gamma$ is isomorphic to either $\Gamma^1_{\cC\otimes\cC}(G,h,\overline{\gamma})$ or $\Gamma_{\cC\otimes\cC}^2(G,h,\overline{H})$ (for an element $h$ in $G$ of order 2, a triple $\overline{\gamma}=(\bar{g}_1, \bar{g}_2, \bar{g}_3)$ of elements in $\overline{G}=G/\langle h \rangle$ and a subgroup $\overline{H} \subset \overline{G}$ isomorphic to $(\Z/2)^3$). Moreover, no grading of the first type is isomorphic to one of the second type and
\begin{itemize}
\item $\Gamma^1_{\cC\otimes\cC}(G,h,\overline{\gamma})$ is isomorphic to $\Gamma^1_{\cC\otimes\cC}(G,h',\overline{\gamma}')$ if and only if $h=h'$ and $\overline{\gamma}\sim\overline{\gamma}'$.

\item $\Gamma_{\cC\otimes\cC}^2(G,h,\overline{H})$ is isomorphic to $\Gamma_{\cC\otimes\cC}^2(G,h',\overline{H}')$ if and only if $h=h'$ and $\overline{H}=\overline{H}'$.
\end{itemize}
\end{corollary}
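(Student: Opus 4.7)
The plan is to deduce this corollary directly from Theorem \ref{th:iso.CxC.graded-simple} by transporting the classification through the scheme isomorphism of Theorem \ref{th:iso.aut.schemes.cayley}.

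First, the composition $\AAut(\cC\otimes\cC)\stackrel{\Phi}{\to}\AAut(\cC_0\times\cC_0)\stackrel{\varphi}{\leftarrow}\AAut(\cC\times\cC)$ is an isomorphism of affine group schemes. Since $G$ is abelian, $G$-gradings on an algebra $\cA$ correspond to morphisms $G^D\to\AAut(\cA)$ and isomorphism of gradings corresponds to conjugacy of such morphisms (see Section \ref{subsection.schemes}). Consequently this scheme isomorphism induces a natural bijection between isomorphism classes of $G$-gradings on $\cC\times\cC$ and on $\cC\otimes\cC$, realized by restricting each grading to its common skew subalgebra $\cC_0\times\cC_0\simeq (\cC_0\otimes\FF 1)\oplus(\FF 1\otimes\cC_0)$ (via \eqref{eq:iso.traceless}) and noting that such restriction determines the whole grading. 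By construction (Section \ref{subsection.Tensor.tensor.product}), the gradings $\Gamma^1_{\cC\otimes\cC}(G,h,\overline{\gamma})$ and $\Gamma^2_{\cC\otimes\cC}(G,h,\overline{H})$ are exactly the images of $\Gamma^1_{\cC\times\cC}(G,h,\overline{\gamma})$ and $\Gamma^2_{\cC\times\cC}(G,h,\overline{H})$ under this bijection.

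Second, I must verify that the hypothesis of the corollary matches the graded-simplicity hypothesis of Theorem \ref{th:iso.CxC.graded-simple}. Via \eqref{eq:iso.traceless}, graded-simplicity of $(\cC_0\otimes\FF 1)\oplus(\FF 1\otimes\cC_0)$ under the commutator is equivalent to graded-simplicity of $(\cC_0\times\cC_0,[\cdot,\cdot])$. Since $\cC$ and $(\cC_0,[\cdot,\cdot])$ are both simple (Remark \ref{re:cayley.simple}), the only proper nonzero ideals of $\cC\times\cC$ and of $(\cC_0\times\cC_0,[\cdot,\cdot])$ are their respective factors. A graded factor $\cC\times 0$ of $\cC\times\cC$ restricts to the graded factor $\cC_0\times 0$; conversely, if $\cC_0\times 0$ is $G^D$-stable, then by the uniqueness of the extension of the $G^D$-action from $\cC_0\times\cC_0$ to $\cC\times\cC$ provided by $\varphi$, the extended action decomposes as a product of actions on each factor and hence preserves $\cC\times 0=\FF(1,0)\oplus(\cC_0\times 0)$. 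Thus the two notions of graded-simplicity coincide.

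With these two ingredients, the corollary follows by direct transcription: Theorem \ref{th:iso.CxC.graded-simple} exhausts the isomorphism classes on $\cC\times\cC$ as $\Gamma^1_{\cC\times\cC}(G,h,\overline{\gamma})$ and $\Gamma^2_{\cC\times\cC}(G,h,\overline{H})$ with the stated criteria in terms of $h$ together with $\overline{\gamma}$ (up to $\sim$) or $\overline{H}$, and transporting along the bijection yields the analogous statement on $\cC\otimes\cC$ with identical combinatorial data. The main point requiring care is the equivalence of graded-simplicity on both sides; once that is settled, the rest is a formal translation and no further computation is needed.
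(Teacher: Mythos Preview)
Your proposal is correct and follows essentially the same route as the paper: restrict the grading to the skew subalgebra, transport it through the isomorphism \eqref{eq:iso.traceless} to $\cC_0\times\cC_0$, extend to $\cC\times\cC$, and invoke Theorem~\ref{th:iso.CxC.graded-simple}. Your scheme-theoretic framing (morphisms $G^D\to\AAut(\cA)$ up to conjugacy) makes the bijection of isomorphism classes explicit, and your argument that graded-simplicity of $(\cC_0\times\cC_0,[\cdot,\cdot])$ is equivalent to graded-simplicity of $\cC\times\cC$ spells out a step that the paper handles by a terse appeal to Remark~\ref{re:C_0}; otherwise the two proofs coincide.
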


\begin{proof}
Let $\Gamma$ be a $G$-grading on $\cC\otimes\cC$ such that for the induced $G$-grading $\Gamma_0$ on the algebra $(\cC_0\otimes\FF 1) \oplus (\FF 1\otimes\cC_0)$ with the product given by the commutator (see Lemma \ref{le:grad.subspaces} b) and Definition \ref{df:structurable.algebra}) $(\cC_0\otimes\FF 1) \oplus (\FF 1\otimes\cC_0)$ is graded-simple.

Using the isomorphism from Equation \eqref{eq:iso.traceless} we obtain a $G$-grading $\Gamma_{\cC_0\times\cC_0}$ on $\cC_0\times\cC_0$ isomorphic to $\Gamma_0$ where $\cC_0\times\cC_0$ is a graded-simple algebra (again with the product given by the commutator). Finally, by Remark \ref{re:C_0}, $\Gamma_{\cC_0\times\cC_0}$ induces a grading $\Gamma_{\cC\times\cC}$ on $\cC\times \cC$ (with the usual product) such that $\cC\times \cC$ is graded-simple. The result follows from Theorem \ref{th:iso.CxC.graded-simple}.
\end{proof}

Now we  generate gradings on $\cC\otimes\cC$ from gradings on $\cC\times\cC$ such that this cartesian product is not graded-simple, that is, such that $\cC\times 0$ and $0\times\cC$ are graded ideals.

Let $G$ be a group and let $\Gamma$ be a $G$-grading on $\cC\times\cC$ such that $\cC\times 0$ and $0\times\cC$ are graded ideals, then by \cite[Theorem 4.1]{CE18}
\[
\Gamma \simeq \Gamma^1\times_G\Gamma^2
\]
for some $G$-gradings $\Gamma^1$ and $\Gamma^2$ on $\cC$. Then we restrict the product $G$-grading to $\cC_0\times\cC_0$:
\[
\Gamma^1\times_G\Gamma^2 \vert_{\cC_0\times\cC_0}= \Gamma^1\vert_{\cC_0}\times_G\Gamma^2 \vert_{\cC_0}
\]
and using the isomorphism of Equation \eqref{eq:iso.traceless} we obtain a $G$-grading on $(\cC_0\otimes \FF 1)\oplus(\FF 1 \otimes\cC_0)$. Finally, by Theorem \ref{th:iso.aut.schemes.cayley}, this last grading extends to a $G$-grading on $\cC\otimes\cC$.

\vspace*{0.5cm}

Next result gives the classification of gradings, up to isomorphism, on $\cC\otimes \cC$ such that the graded subspace $(\cC_0\otimes\FF 1) \oplus (\FF 1\otimes\cC_0)$ is not graded-simple.

\begin{proposition}\label{pr:product.grading.isomorphism.CtensorC}
Let $G$ be a group and let $\Gamma$ be a $G$-grading on the tensor product of two Cayley algebras $\cC\otimes\cC$. Suppose that for the induced $G$-grading $\Gamma_0$ on the algebra $(\cC_0\otimes\FF 1) \oplus (\FF 1\otimes\cC_0)$ with the product given by the commutator  $(\cC_0\otimes\FF 1) \oplus (\FF 1\otimes\cC_0)$ is not graded-simple, i.e., $\cC_0\otimes\FF 1$ and $\FF 1\otimes\cC_0$ are $G$-graded ideals. By \cite[Theorem 4.1]{CE18} we have that $\Gamma_0$ is isomorphic to a product $G$-grading $\Gamma_0^1\times_G\Gamma_0^2$ for some $G$-gradings $\Gamma_0^1$ on $\cC_0\otimes\FF 1$ and $\Gamma_0^2$ on $\FF 1\otimes\cC_0$.

Let $\Gamma$ and $\Gamma'$ be $G$-gradings on $\cC\otimes\cC$ and let $\Gamma_0$ and $\Gamma_0'$ be the $G$-gradings induced on $(\cC_0\otimes\FF 1) \oplus (\FF 1\otimes\cC_0)$ by $\Gamma$ and $\Gamma'$, respectively. Let $\Gamma_0^1$ and $\Gamma_0'^1$ be $G$-gradings on $\cC_0\otimes\FF 1$ and let $\Gamma_0^2$ and $\Gamma_0'^2$ be $G$-gradings on $\FF 1\otimes\cC_0$ such that $\Gamma_0\simeq \Gamma_0^1\times_G\Gamma_0^2$ and $\Gamma'_0\simeq \Gamma_0'^1\times_G\Gamma_0'^2$.

Then, $\Gamma$ and $\Gamma'$ are isomorphic if and only if $\Gamma_0^1 \simeq \Gamma_0'^1$ and $\Gamma_0^2 \simeq \Gamma_0'^2$ or $\Gamma_0^1 \simeq \Gamma_0'^2$ and $\Gamma_0^2 \simeq \Gamma_0'^1$. \begin{flushright}
$\square$
\end{flushright}
\end{proposition}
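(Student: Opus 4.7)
The plan is to reduce this statement to the analogous one already established for the direct product in Proposition~\ref{pr:product.grading.isomorphism.CxC}, by using the isomorphism of automorphism group schemes $\AAut(\cC\otimes\cC)\simeq\AAut(\cC\times\cC)$ provided by Theorem~\ref{th:iso.aut.schemes.cayley}. Since $G$-gradings on a finite-dimensional algebra $A$ correspond to morphisms $G^D\to\AAut(A)$ and isomorphism classes of gradings correspond to $\Aut(A)$-orbits of such morphisms, the scheme isomorphism yields a bijection between isomorphism classes of $G$-gradings on $\cC\otimes\cC$ and on $\cC\times\cC$.

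First I would describe this bijection explicitly. Given a $G$-grading $\Gamma$ on $\cC\otimes\cC$, its restriction to $N'_{alt}(\cC\otimes\cC)=(\cC_0\otimes\FF 1)\oplus(\FF 1\otimes\cC_0)$ is exactly $\Gamma_0$; transporting it via the isomorphism~\eqref{eq:iso.traceless} produces a $G$-grading on $\cC_0\times\cC_0$ which, by Remark~\ref{re:C_0}, extends uniquely to a $G$-grading $\tilde\Gamma$ on $\cC\times\cC$. Under this bijection, $\cC_0\otimes\FF 1$ and $\FF 1\otimes\cC_0$ are graded ideals of $\Gamma_0$ if and only if $\cC\times 0$ and $0\times\cC$ are graded ideals of $\tilde\Gamma$, so the hypothesis on $\Gamma$ and $\Gamma'$ is equivalent to $\tilde\Gamma$ and $\tilde\Gamma'$ being non graded-simple on $\cC\times\cC$.

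Thus Proposition~\ref{pr:product.grading.isomorphism.CxC} applies, and we may write $\tilde\Gamma\simeq\Gamma^1\times_G\Gamma^2$ and $\tilde\Gamma'\simeq\Gamma'^1\times_G\Gamma'^2$, where by construction $\Gamma^i|_{\cC_0}$ corresponds to $\Gamma_0^i$ via~\eqref{eq:iso.traceless} (and similarly for the primed ones). Since any $G$-grading on the Cayley algebra $\cC$ is determined up to isomorphism by its restriction to $\cC_0$ (Remark~\ref{re:C_0}), one has $\Gamma^i\simeq\Gamma'^j$ on $\cC$ if and only if $\Gamma_0^i\simeq\Gamma_0'^j$ on the corresponding summand. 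Combining this equivalence with the characterisation supplied by Proposition~\ref{pr:product.grading.isomorphism.CxC} immediately yields the stated dichotomy.

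The only point I see requiring genuine care, and which I regard as the principal technical check of the argument, is to verify that the scheme isomorphism of Theorem~\ref{th:iso.aut.schemes.cayley} sends the swap automorphism $(x,y)\mapsto(y,x)$ of $\cC\times\cC$ to the factor-swap $x\otimes y\mapsto y\otimes x$ of $\cC\otimes\cC$, so that the ``swapped'' alternative in Proposition~\ref{pr:product.grading.isomorphism.CxC} lifts correctly to the tensor-product setting. This is done by tracing both automorphisms through the restriction to $N'_{alt}$ and through~\eqref{eq:iso.traceless}: both act on $\cC_0\oplus\cC_0$ by interchanging the two summands, and since the scheme isomorphism is realised precisely as this restriction, the required compatibility follows. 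With this bookkeeping in place, the reduction is complete.
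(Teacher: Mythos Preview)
Your proposal is correct and follows precisely the approach the paper intends: the proposition in the paper carries no proof beyond the $\square$, since the authors regard it as an immediate consequence of the transfer of gradings along the scheme isomorphism of Theorem~\ref{th:iso.aut.schemes.cayley} (via restriction to $N'_{alt}$ and the map~\eqref{eq:iso.traceless}) together with Proposition~\ref{pr:product.grading.isomorphism.CxC}. Your write-up spells out exactly this reduction; the extra paragraph on the swap automorphism is harmless but not strictly needed, since the ``or swapped'' clause is already built into Proposition~\ref{pr:product.grading.isomorphism.CxC} and transfers automatically once you know the correspondence of gradings preserves isomorphism classes.
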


\begin{df}\label{df:tensor_of_gradings}
Let $G$ and $H$ be groups. Let $\Gamma^1$ be a $G$-grading on an algebra $\cA$ and let $\Gamma^2$ be a $H$-grading on an algebra $\cB$. Recall that $\cA\otimes\cB$ has a natural $G\times H$-grading given by $(\cA\otimes\cB)_{(g,h)}= \cA_g\otimes\cB_h$. We  call this grading \textit{tensor product of $\Gamma^1$ and $\Gamma^2$} and denote it by $\Gamma^1\otimes\Gamma^2$.
\end{df}

Finally we obtain the fine gradings on $\cC\otimes\cC$ up to equivalence.

\begin{proposition}\label{pr:up_to_equivalence_CtensorC}
We have six different fine gradings, up to equivalence, on $\cC\otimes\cC$. Such gradings are in correspondence with the ones in Proposition \ref{pr:CxC.equivalence} and they are the following:
\begin{enumerate}
\item $\Gamma^1_{\cC} \otimes \Gamma^1_{\cC}$ by its universal group $\Z^2 \times \Z^2\simeq \Z^4$.

\item $\Gamma^1_{\cC} \otimes \Gamma^2_{\cC}$ by its universal group $\Z^2 \times (\Z/2)^3$.

\item $\Gamma^2_{\cC} \otimes \Gamma^2_{\cC}$ by its universal group $(\Z/2)^3 \times (\Z/2)^3 \simeq (\Z/2)^6$.

\item The grading $\Gamma_{\cC\otimes\cC}^1(\Z/2\times\Z^2,(\bar{1},0,0),((1,0),(0,1),(-1,-1)))$ on $\cC\otimes\cC$ by its universal group $\Z/2\times\Z^2$. This grading is generated by the following homogeneous components in $\cB:= (\cC_0\otimes\FF 1) \oplus (\FF 1\otimes\cC_0)$:
\[
\begin{array}{l}
\cB_{(\bar{0},0,0)}=\FF ((e_1-e_2)\otimes 1 + 1\otimes (e_1-e_2)),\\
\cB_{(\bar{1},0,0)}=\FF ((e_1-e_2)\otimes 1 - 1\otimes (e_1-e_2)),\\
\cB_{(\bar{0},g)}=\FF (x\otimes 1 + 1\otimes x),\\
\cB_{(\bar{1},g)}=\FF (x\otimes 1 - 1\otimes x),
\end{array}
\]
for $g \in \Z^2 \setminus \lbrace (0,0) \rbrace$ and $x\in \cC_g$ in $\Gamma^1_{\cC}$.

\item The grading $\Gamma_{\cC\otimes\cC}^2\Bigl(\bigl(\Z/2\bigr)^4,(\bar 1,\bar 0,\bar 0,\bar 0),\bigl(\Z/2\bigr)^3\Bigr)$ on $\cC\otimes\cC$ by its universal group $\bigl(\Z/2\bigr)^4$. This grading is generated by the following homogeneous components in $\cB:= (\cC_0\otimes\FF 1) \oplus (\FF 1\otimes\cC_0)$:
\[
\begin{array}{l}
\cB_{(\bar{0},g)}=\FF (x\otimes 1 + 1\otimes x),\\
\cB_{(\bar{1},g)}=\FF (x\otimes 1 - 1\otimes x),
\end{array}
\]
for $g \in (\Z/2)^3 \setminus \lbrace (\bar{0},\bar{0},\bar{0}) \rbrace$ and $x\in \cC_g$ in $\Gamma^2_{\cC}$.

\item The grading $\Gamma_{\cC\otimes\cC}^2(\Z/4\times(\Z/2)^2,(\widehat{2},\bar 0,\bar 0),(\Z/2)^3)$ on $\cC\otimes\cC$ by its universal group $\Z/4\times(\Z/2)^2$. Here we denote by $\widehat{m}$ the class of the integer $m$ modulo $4$ and restrict the usual notation $\bar{m}$ for the class of $m$ modulo $2$ and $\mathbf{i}$ denotes a square root of $-1$ in $\FF$. This grading is generated by the following homogeneous components in $\cB:= (\cC_0\otimes\FF 1) \oplus (\FF 1\otimes\cC_0)$:
\[
\cB_{(\widehat{m},\bar n, \bar p)}=\FF (x\otimes 1+\mathbf{i}^m \otimes x)
\]
for $(\pi(\widehat{m}),\bar n, \bar p) \in (\Z/2)^3$ and $x\in \cC_{(\pi(\widehat{m}),\bar n, \bar p)}$ in $\Gamma^2_{\cC}$.
\end{enumerate}
\end{proposition}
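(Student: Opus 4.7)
The plan is to deduce this classification directly from Proposition~\ref{pr:CxC.equivalence} by transferring each fine grading on $\cC\times\cC$ to a fine grading on $\cC\otimes\cC$ via the scheme isomorphism of Theorem~\ref{th:iso.aut.schemes.cayley}. Recall that gradings by abelian groups on a finite dimensional algebra correspond bijectively to morphisms $G^D\to \AAut(\mathcal{A})$ (Section~\ref{subsection.schemes}); hence the isomorphism $\AAut(\cC\otimes\cC)\simeq\AAut(\cC\times\cC)$ induces, for every abelian group $G$, a natural bijection between $G$-gradings on the two algebras. This bijection respects refinements (and thus fineness), preserves universal groups, and preserves equivalence classes. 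So the six equivalence classes of fine gradings on $\cC\times\cC$ give, up to equivalence, exactly six fine gradings on $\cC\otimes\cC$, each with the same universal group as its counterpart.

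Next, I would make the transfer explicit by unfolding the two morphisms in Theorem~\ref{th:iso.aut.schemes.cayley}. A $G$-grading $\Gamma$ on $\cC\times\cC$ restricts to a $G$-grading on the derived subalgebra $\cC_0\times\cC_0$; via the linear isomorphism
\[
\cC_0\times\cC_0\longrightarrow (\cC_0\otimes\FF 1)\oplus (\FF 1\otimes\cC_0),\qquad (x,y)\longmapsto x\otimes 1+1\otimes y,
\]
it transfers to a $G$-grading on $N'_{alt}(\cC\otimes\cC)=(\cC_0\otimes\FF 1)\oplus(\FF 1\otimes\cC_0)$, which by Remark~\ref{re:C_0} extends uniquely to a $G$-grading on $\cC\otimes\cC$. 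This is precisely the procedure already used to define $\Gamma^1_{\cC\otimes\cC}(G,h,\overline{\gamma})$ and $\Gamma_{\cC\otimes\cC}^2(G,h,\overline{H})$ in Section~\ref{subsection.Tensor.tensor.product}.

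I would then apply this procedure to each of the six gradings in Proposition~\ref{pr:CxC.equivalence}. For items (1)--(3), the input is a product grading $\Gamma^i_{\cC}\times\Gamma^j_{\cC}$, whose restriction to $\cC_0\times\cC_0$ is the corresponding product of restrictions, so the transferred grading on $\cC\otimes\cC$ is exactly the tensor product grading $\Gamma^i_{\cC}\otimes\Gamma^j_{\cC}$ of Definition~\ref{df:tensor_of_gradings}. For items (4) and (5), the homogeneous components of the gradings $\Gamma^1_{\cC\times\cC}(\Z/2\times\Z^2,(\bar 1,0,0),\cdot)$ and $\Gamma^2_{\cC\times\cC}((\Z/2)^4,(\bar 1,\bar 0,\bar 0,\bar 0),(\Z/2)^3)$ have the explicit form $\FF(x,\pm x)$ described in Equations \eqref{eq:gamma1_h} and \eqref{eq:gamma2_h}; restricting to skew elements and applying $(x,y)\mapsto x\otimes 1+1\otimes y$ yields exactly the components $x\otimes 1\pm 1\otimes x$ listed in the statement.

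The main technical point is item (6), where the morphism $\pi$ does not split as a direct product, and the isomorphism $\Phi\colon L_\pi(\cC)\to\cC\times\cC$ sends $x\otimes g\mapsto(x,\chi(g)x)$ with $\chi(\widehat m,\bar n,\bar p)=\mathbf{i}^m$. Restricting this to the skew part and composing with $(x,y)\mapsto x\otimes 1+1\otimes y$ produces the homogeneous component $\FF(x\otimes 1+\mathbf{i}^m\cdot 1\otimes x)$ for $x\in\cC_{(\pi(\widehat m),\bar n,\bar p)}$, which is exactly the description in (6). Because the scheme isomorphism automatically preserves fineness and the universal group, once these explicit descriptions are verified no further work is needed to conclude that the list is complete and the six gradings are pairwise inequivalent.
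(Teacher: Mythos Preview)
Your proposal is correct and follows the same route as the paper: the proposition is stated there without a separate proof, relying on the transfer procedure explained in the text of Section~\ref{subsection.Tensor.tensor.product} (restrict to $\cC_0\times\cC_0$, apply the isomorphism of Equation~\eqref{eq:iso.traceless}, then extend) together with the scheme isomorphism of Theorem~\ref{th:iso.aut.schemes.cayley} and the list in Proposition~\ref{pr:CxC.equivalence}. You have simply made explicit what the paper leaves implicit, including the justification that an isomorphism of automorphism group schemes preserves equivalence classes, fineness, and universal groups (the same fact invoked in Corollary~\ref{clasif-grads-smirnov} via \cite[Theorems~1.38 and 1.39]{EKmon}).
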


%%%%%%%%%%%%%%%%%%%%%%%%%%%%%%%%%%%%%%%%%%%%%%%%%%%%%%%%%%%%%%%%%%%%%%%%%%%%%%%%%%%%%%
%%%%%%%%%%%%%%%%%%%%    *SECTION*   SMIRNOV ALGEBRA               %%%%%%%%%%%%%%%%%%%%
%%%%%%%%%%%%%%%%%%%%%%%%%%%%%%%%%%%%%%%%%%%%%%%%%%%%%%%%%%%%%%%%%%%%%%%%%%%%%%%%%%%%%%
\section{Gradings on the Smirnov algebra} \label{section.smirnov}

In this section we first determine the automorphism group scheme of the Smirnov algebra $\smir$, and then we obtain a classification of the group gradings on $\smir$, in terms of the associated Cayley algebra.

We will only consider gradings on $\smir$ as an algebra with involution. Therefore, for any group grading on $\smir$, the projections $\pi_\cH \colon \smir \to \cH$, $h+s \mapsto h$ and $\pi_\cS \colon \smir \to \cS$, $h+s \mapsto s$ are homogeneous maps of trivial degree and the subspaces $\cH$ and $\cS$ are graded. The products $\odot$ and $[\cdot,\cdot]$ are also homogeneous since they are obtained using the projections of the product of $\smir$ in the subspaces $\cH$ and $\cS$.

We claim that for any grading on $\smir$, its universal group is abelian. Let $\Gamma$ be a $G$-grading on $\smir$ with $G = \Univ(\Gamma)$. Given a homogeneous basis $\{s_i\}_{i=1}^7$ of $\cS$, we have that $\pi_\cH(s_is_j) = s_i \times s_j$ and $\{ s_i \times s_j \med 1 \leq i \leq j \leq 7\}$ is a homogeneous basis of $\cH$. Therefore, $G$ is generated by the support of the subspace $\cS$. Since $\pi_\cH(s_is_j) = \pi_\cH(s_js_i) \neq 0$ with $\pi_\cH$ homogeneous of degree $0$, it follows that $\deg(s_i)\deg(s_j) = \deg(s_j)\deg(s_i)$, i.e., the elements of the support of $\cS$ commute. We conclude that $G$ is abelian.

From now on we will only consider gradings on $\smir$ by abelian groups, and for this reason the products of the groups will be denoted additively.

\begin{proposition}
The trace form $\tr \colon \smir \times \smir \to \FF$ is a nondegenerate symmetric bilinear form that is invariant (i.e., $\tr(\bar x ,\bar y) = \tr(x ,y)$ and $\tr(xy,z) = \tr(x,z\bar y)$ for $x,y\in\smir$) and homogeneous for any grading on $\smir$ (i.e., $\deg(x) + \deg(y) = 0$ whenever $\tr(x, y) \neq0$ for homogeneous elements $x, y\in\smir$).
\end{proposition}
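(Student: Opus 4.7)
First, the involution fixes $\cH$ and negates $\cS$, so the linear trace satisfies $\tr(\bar z) = \tr(z)$. Symmetry of the bilinear trace is then immediate: $\tr(y, x) = \tr(y\bar x) = \tr(\overline{y\bar x}) = \tr(x\bar y) = \tr(x, y)$. The next key step is the vanishing $\tr(hs) = 0$ for $h = s_3\times s_4\in\cH$ and $s\in\cS$. Expanding $hs = h\odot s + \tfrac{1}{2}[h, s]$ via \eqref{product}, the first summand lies in $\cS$ (hence has trace $0$), while the second equals $-\tfrac{1}{2}\bigl([s, s_3]\times s_4 + s_3\times [s, s_4]\bigr)$, whose trace is $4\bigl(n([s, s_3], s_4) + n(s_3, [s, s_4])\bigr) = 0$ thanks to the invariance $n(x, [z, y]) = n([x, z], y)$ of the Malcev form on $\cS$. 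Since $\bar x y - x\bar y = 2(h_1 s_2 - s_1 h_2)$ for $x = h_1 + s_1$ and $y = h_2 + s_2$, this yields $\tr(\bar x, \bar y) = \tr(\bar x y) = \tr(x\bar y) = \tr(x, y)$.

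For the invariance $\tr(xy, z) = \tr(x, z\bar y)$, rather than compute with the Smirnov product directly I would transport the problem through the isomorphism $\psi\colon\smir\to\smirCxC\subset\cC\otimes\cC$ of algebras with involution. Polarising \eqref{isomSmirnovConstructions} yields $\psi(s_1\times s_2) = s_1\otimes s_2 + s_2\otimes s_1 - n(s_1, s_2)\,1\otimes 1$, and an evaluation on $\cH$ and on $\cS$ shows $\tr_\smir = 2\,(t_\cC\otimes t_\cC)\circ\psi$, where $t_\cC(a) = n(a, 1)$ is the generic trace of $\cC$. Because $\cC$ is alternative, $t_\cC$ annihilates associators and satisfies $t_\cC(ab) = t_\cC(ba)$, so the bilinear form $(u, v)\mapsto (t_\cC\otimes t_\cC)(uv)$ on $\cC\otimes\cC$ is symmetric and associative: $(t_\cC\otimes t_\cC)\bigl((uv)w\bigr) = (t_\cC\otimes t_\cC)\bigl(u(vw)\bigr)$. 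These properties restrict to $\smirCxC$, a subalgebra closed under the involution, and pull back via $\psi$ to the analogous identities in $\smir$, from which the stated invariance follows upon replacing $z$ with $\bar z$.

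With invariance at our disposal, nondegeneracy follows from simplicity of $\smir$: the radical of the symmetric bilinear form $B(u, v)\coloneqq\tr(uv)$ is a two-sided ideal by associativity, and since $B(1, 1) = \tr(1) = 7\neq 0$ it must be zero; precomposition with the bijective involution then gives nondegeneracy of $\tr(x, y) = \tr(x\bar y)$.

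Finally, for homogeneity, note that $\cS$ is a graded subspace and the product $\odot$ is homogeneous for any grading on $\smir$. Substituting $s_1 = s_2$ homogeneous of degree $g$ and $s_3$ homogeneous of degree $h$ into the identity $s_1\odot(s_2\odot s_3) = -n(s_2, s_3) s_1 - \tfrac{1}{2}\bigl(n(s_1, s_2) s_3 + n(s_1, s_3) s_2\bigr)$ of \eqref{product}, the left-hand side is homogeneous of degree $2g + h$, while the right-hand side splits into components of degrees $g$ and $h$. Equating homogeneous components (with a brief case analysis to handle the coincidences $s_3\in\FF s_2$ and $2g = 0$) forces $n(s_2, s_3) = 0$ whenever $g + h \neq 0$, so $n|_\cS$ is homogeneous of degree $0$. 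Consequently $\tr(s_1\times s_2) = -8n(s_1, s_2)$ vanishes unless $\deg s_1 + \deg s_2 = 0$, and combined with $\tr|_\cS = 0$ this shows $\tr$ is homogeneous of degree $0$; since the involution preserves degrees, $\tr(x, y) = \tr(x\bar y)$ then vanishes whenever $\deg x + \deg y \neq 0$. The main difficulty in the entire programme is the invariance step, which I circumvent via the embedding into $\cC\otimes\cC$ rather than wrestle with associators of the Smirnov product directly.
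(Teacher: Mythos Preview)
Your proof is correct and tracks the paper's strategy closely: both establish invariance by transporting to $\cC\otimes\cC$ via $\psi$ (the paper cites the Allison--Faulkner form $\chi(a\otimes b,c\otimes d)=n(a,c)n(b,d)$ and notes it is proportional to $\tr$ through $\psi$, while you unpack this explicitly as $2(t_\cC\otimes t_\cC)\circ\psi$ and verify associativity of $t_\cC$ by hand), and both deduce homogeneity by comparing degrees in a product identity on $\cS$. The differences are in the details. For nondegeneracy, the paper simply inherits it from the nondegeneracy of $\chi|_{\smirCxC}$, whereas you argue via simplicity of $\smir$ and associativity of $B(u,v)=\tr(uv)$; your route is more self-contained but note that the specific witness $B(1,1)=7$ vanishes in characteristic $7$, so you should instead say $B$ is not identically zero (e.g.\ $B(s_1\times s_2,1)=-8n(s_1,s_2)\neq0$ for suitable $s_1,s_2$). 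For homogeneity, the paper uses the Malcev identity $[[x,y],y]=f(y,y)x-f(x,y)y$ from \eqref{eqf} rather than your $\odot$-identity; both require the same kind of brief case analysis, and the paper's choice has the mild advantage that the coefficient of the term you need to kill is $-f(s,t)$ rather than $-\tfrac{3}{2}n(s_2,s_3)$, so no separate check is needed in characteristic $3$.
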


\begin{proof}
In \cite[Eq.~(1.7)]{AF93b}, Allison and Faulkner proved that $\CxC$ has an invariant nondegenerate symmetric bilinear form given by
\begin{equation}\begin{aligned}
\chi\colon (\CxC) \times (\CxC) & \longrightarrow\FF \\
(a \otimes b, c \otimes d) & \longmapsto n(a,c)n(b,d),
\end{aligned}\end{equation}
which in turn restricts to an invariant nondegenerate symmetric bilinear form $\chi|_{\smirCxC}$ of $\smirCxC$. Note that with the corresponding identification through the isomorphism $\psi$ in \eqref{isomSmirnovConstructions}, $\chi|_{\smirCxC}$ is proportional to the trace $\tr$ of $\smir$, and therefore $\tr$ satisfies the same properties. It remains to prove that $\tr$ is homogeneous for any grading.

Fix a $G$-grading $\Gamma \colon \smir = \bigoplus_{g\in G} \smir_g$. Since $\tr(x, y) = \tr(x \bar y)$, it suffices to prove that $\tr(\smir_g) = 0$ for each $0 \neq g\in G$. Note that $\tr(\cS) = 0$, so we only need to prove that $\tr(\smir_g \cap \cH) = 0$ if $0 \neq g\in G$. Recall that if $\{s_i\}_{i=1}^7$ is a homogeneous basis of $\cS$ then $\{ s_i \times s_j \med 1\leq i \leq j \leq 7 \}$ is a homogeneous basis of $\cH$ and $\deg(s_i \times s_j) = \deg(s_i) + \deg(s_j)$. By \eqref{eqf}, \eqref{eqfn}, and the fact that $[\cdot,\cdot]$ is homogeneous it follows that $\deg(s)+\deg(t) = 0$ for any homogeneous elements $s,t\in\cS$ such that $n(s,t) \neq 0$. In other words, $\deg(s \times t) = 0$ for any homogeneous elements $s,t\in\cS$ such that $\tr(s \times t)\neq 0$, and therefore $\tr$ is homogeneous.
\end{proof}

\begin{remark}
Consider the linear map $\pi_1 \colon \smir \to \smir$ determined by $(s_1 \otimes s_2) + s \mapsto \frac{-1}{2}n(s_1,s_2)1 = \frac{1}{16}\tr(s_1,s_2)1$ for $s_1,s_2,s\in\cS$. The map $\pi_1$ is homogeneous of trivial degree because the trace form $\tr$ is homogeneous. The identity elements of the algebras $\cC$ and $\smir$ can be identified, so $\im \pi_1 = \FF1 \subseteq\cC$. Therefore, the product $\cdot$ of the Cayley algebra $\cC$ can be recovered from the product of $\smir$ as follows:
\begin{equation} \label{eqRecoverProduct}
s_1 \cdot s_2 = \pi_1(s_1s_2) + \pi_\cS(s_1s_2)
\end{equation}
for any $s_1,s_2 \in\cS$, and obviously $1 \cdot x = x = x \cdot 1$ for $x\in\cC$.
\end{remark}

\begin{theorem} \label{smirnovschemetheorem}
The automorphism group schemes $\AAut(\cC)$ and $\AAut(\smir, -)$ are isomorphic.
\end{theorem}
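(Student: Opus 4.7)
Our strategy is to define a natural morphism $\theta \colon \AAut(\cC) \to \AAut(\smir,-)$ of affine group schemes and then apply Theorem~\ref{th:A.50}. Since $\AAut(\cC)$ is smooth (Remark~\ref{re:smooth.subscheme} i)), it suffices to verify that $\theta_{\overline{\FF}}$ and $d\theta$ are bijective.

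To define $\theta$, recall that any automorphism of a Hurwitz algebra preserves the norm $n$ and the standard involution (the generic minimal equation $x^2 - t(x)x + n(x) = 0$ is preserved); hence it also preserves $\cS=\cC_0$, the restriction of $n$ to $\cS$, and the commutator on $\cS$. Two equivalent descriptions of $\theta$ are then available. In terms of Smirnov's construction, for $R \in \Alg_{\FF}$ and $\varphi \in \Aut_R(\cC \otimes R)$ we set
\[
\theta(R)(\varphi)(s) = \varphi(s), \qquad \theta(R)(\varphi)(s_1 \times s_2) = \varphi(s_1) \times \varphi(s_2),
\]
for $s,s_1,s_2 \in \cS \otimes R$; since every product in \eqref{product} is built from $[\cdot,\cdot]$ and $n|_\cS$, the extension $\tilde\varphi \coloneqq \theta(R)(\varphi)$ is an involution preserving automorphism. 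Equivalently, in the Allison--Faulkner realization, $\varphi \otimes \varphi$ is an involution preserving automorphism of $\CxC$ stabilizing $\smirCxC$, since $a \otimes a - n(a) 1 \otimes 1 \mapsto \varphi(a) \otimes \varphi(a) - n(\varphi(a)) 1 \otimes 1$. Functoriality in $R$ is immediate.

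For the bijectivity of $\theta_{\overline{\FF}}$, injectivity is clear: if $\tilde\varphi = \id$ then $\varphi|_\cS = \id$, and since $\cS$ generates $\cC$ as a unital algebra, $\varphi = \id$. For surjectivity, let $\Phi \in \Aut(\smir,-)$. Since $\Phi$ preserves $\cS$, its restriction $\Phi|_\cS$ is an automorphism of the Malcev algebra $\cS^{(-)} = \cC_0^{(-)}$, which by \cite[Proposition 3.6]{MPP01} (see Remark~\ref{re:smooth.subscheme} ii)) lifts uniquely to an automorphism $\varphi \in \Aut(\cC)$. The identity $s_1 \odot s_2 = s_1 \times s_2$ then forces $\Phi(s_1 \times s_2) = \Phi(s_1) \odot \Phi(s_2) = \varphi(s_1) \times \varphi(s_2)$, so $\Phi = \theta_{\overline{\FF}}(\varphi)$.

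By Remark~\ref{re:deriv.lie} ii), $d\theta$ is the map $\Der(\cC) \to \Der(\smir,-)$, $D \mapsto \tilde D$, where $\tilde D|_\cS = D|_\cS$ and $\tilde D(s_1 \times s_2) = D(s_1) \times s_2 + s_1 \times D(s_2)$ (the Leibniz rule applied to $s_1 \odot s_2$). The same reasoning yields bijectivity: injectivity because $\cS$ generates $\cC$, and surjectivity because any derivation of $(\smir,-)$ restricts to a Malcev derivation of $\cS^{(-)}$ which lifts uniquely to a derivation of $\cC$ via \cite[Proposition 3.6]{MPP01}. The main technical obstacle is the verification that the extension $\tilde\varphi$ (and analogously $\tilde D$) respects all five defining identities in \eqref{product}; this is most conveniently handled by adopting the Allison--Faulkner realization, where $\varphi \otimes \varphi$ is manifestly an automorphism of $\CxC$ and its compatibility with the subalgebra $\smirCxC$ is transparent.
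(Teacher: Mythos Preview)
Your proof is correct and rests on the same core ingredients as the paper's: the extension map $\AAut(\cC) \to \AAut(\smir,-)$, the fact that $\cS$ generates $\smir$, and the isomorphism $\AAut(\cC) \simeq \AAut(\cC_0,[\cdot,\cdot])$ from Remark~\ref{re:smooth.subscheme}~ii). The execution differs, however. Rather than invoking Theorem~\ref{th:A.50} and checking $\overline{\FF}$-points and Lie algebras separately, the paper works entirely at the functor level: both the restriction $\AAut(\smir,-) \to \AAut(\cS,[\cdot,\cdot])$ and the extension $\AAut(\cC) \to \AAut(\smir,-)$ are monomorphisms of group functors (the first because $\cS$ generates $\smir$, the second because $\cS$ generates $\cC$), and their composition through the known isomorphism $\AAut(\cS,[\cdot,\cdot]) \simeq \AAut(\cC)$ is the identity on $\AAut(\smir,-)$. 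A monomorphism with a right inverse is bijective on every $R$-points, hence an isomorphism by Yoneda. This bypasses the separate derivation computation and the appeal to smoothness, though your route via Theorem~\ref{th:A.50} is equally valid and arguably more hands-on.
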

\begin{proof}
First, note that the skew subspace $\cS = \cS(\smir, -)$ can be identified with the skew subspace $\cC_0$ of $\cC$, and recall that $(\cS, [\cdot,\cdot])$ is a Malcev algebra. There is an isomorphism of automorphism group schemes $\AAut(\cC) \simeq \AAut(\cS, [\cdot,\cdot])$ given by the restriction map $\phi \mapsto \phi|_{\cC_0}$. Since $\cS$ generates $\smir$, the natural map $\AAut(\smir, -) \to \AAut(\cS, [\cdot,\cdot])$ given by the restriction $\varphi\mapsto\varphi|_\cS$ is an embedding. Also, the extension map $\AAut(\cC)\to\AAut(\smir, -)$ is an embedding too. Since the composition
$$ \AAut(\smir, -) \longrightarrow \AAut(\cS, [\cdot,\cdot]) \simeq \AAut(\cC) \longrightarrow \AAut(\smir, -) $$
is the identity map, it follows that all these maps are isomorphisms and $\AAut(\cC) \simeq \AAut(\smir, -)$.
\end{proof}

\begin{remark}
Note that $\AAut(\cC) = \AAut(\cC, -)$. The isomorphism in Theorem~\ref{smirnovschemetheorem} and its inverse are given by the extension map
$$\text{Ext} \colon \AAut(\cC) \to \AAut(\smir, -)$$
and the restriction map
$$\text{Res} \colon \AAut(\smir, -) \to \AAut(\cC).$$
That is, for any commutative associative unital $\FF$-algebra $R$,
\begin{align*}
\text{Ext}_R \colon & \Aut_R(\cC \otimes R) \longrightarrow \Aut_R(\smir \otimes R,-) \\
& \phi \mapsto \text{Ext}_R(\phi) \colon \widetilde{s} \mapsto \phi(\widetilde{s}), \; \widetilde{s_1} \times \widetilde{s_2} \mapsto \phi(\widetilde{s_1}) \times \phi(\widetilde{s_2}),
\end{align*}
and
\begin{align*}
\text{Res}_R \colon & \Aut_R(\smir \otimes R,-) \longrightarrow \Aut_R(\cC \otimes R) \\
& \psi \mapsto \text{Res}_R(\psi) \colon 1 \mapsto 1, \; \widetilde{s} \mapsto \psi(\widetilde{s}),
\end{align*}
for each $\widetilde{s},\widetilde{s_1},\widetilde{s_2}\in\cS \otimes R$.
\end{remark}

\begin{remark}
Recall that in \cite[Theorem 2.6]{AF93b} it was proven that the twisted forms of $(\smir, -)$ are the algebras $(\TT(\widetilde{\cC}), -)$ where $\widetilde{\cC}$ is a twisted form of $\cC$. Note that, as in Remark~\ref{re.forms.CxC}, the above theorem allows to prove the same result in a different way.

Indeed, recall from \cite{Wat79} that the isomorphism classes of twisted forms of an algebra $\cA$ can be identified with the elements of the set $H^1(\bar\FF/\FF, \AAut(\cA))$. By Theorem~\ref{smirnovschemetheorem}, we have an isomorphism $\AAut(\smir,-) \simeq \AAut(\cC)$ which in turn produces a bijection between the cohomology sets $H^1(\bar\FF/\FF, \AAut(\smir, -)) \to H^1(\bar\FF/\FF, \AAut(\cC))$. Therefore, there is a natural correspondence between the twisted forms of $\smir$ and the twisted forms of $\cC$. Furthermore, Equation~\eqref{eqRecoverProduct} allows to recover the product of $\cC$ from the product of $\smir$, so it is clear that twisted forms of $\smir$ are as stated above.

Note that the Cayley algebra, up to isomorphism, has exactly two real forms: the split Cayley algebra $\cC_s$ and the division Cayley algebra $\OO$. Therefore there are exactly, up to automorphism, two real forms of the Smirnov algebra: $\TT(\cC_s)$ and $\TT(\OO)$.
\end{remark}

\begin{corollary} \label{clasif-grads-smirnov}
There is a correspondence between the gradings on $(\cC,-)$ and the gradings on $(\smir,-)$ that preserves universal groups, equivalence classes, isomorphism classes, and the Weyl groups of the gradings.
\end{corollary}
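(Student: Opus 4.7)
The plan is to invoke Theorem~\ref{smirnovschemetheorem} together with the correspondence between gradings and morphisms of affine group schemes recalled in Section~\ref{subsection.schemes}. By \cite[Proposition~1.36]{EKmon}, for any abelian group $G$ a $G$-grading on an algebra $\cA$ is the same datum as a morphism of affine group schemes $G^D\to \AAut(\cA)$, where $G^D$ is the Cartier dual of $G$. As noted at the beginning of Section~\ref{section.smirnov}, every grading on $\smir$ is automatically by an abelian group; and since the gradings on $\smir$ considered here preserve the involution, the relevant scheme is $\AAut(\smir,-)$, while on the Cayley side $\AAut(\cC)=\AAut(\cC,-)$ anyway. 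The first step is to translate Theorem~\ref{smirnovschemetheorem} into the statement that composition with the isomorphism $\AAut(\cC)\xrightarrow{\sim}\AAut(\smir,-)$ induces a bijection between morphisms $G^D\to\AAut(\cC)$ and morphisms $G^D\to\AAut(\smir,-)$, natural in $G$, which is the desired correspondence of $G$-gradings.

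Concretely, I would describe the bijection as follows: given a $G$-grading $\Gamma:\cC=\bigoplus_{g\in G}\cC_g$, set $\cS_g=\cC_g\cap\cS$, and transfer the grading to $\smir$ through the extension map of Theorem~\ref{smirnovschemetheorem}, obtaining $\smir_g=\bigoplus_{g_1+g_2=g}(\cS_{g_1}\times\cS_{g_2})\oplus\cS_g$, which is manifestly well-defined and exhausts $\smir$. The inverse takes a grading on $(\smir,-)$, restricts it to $\cS$ and identifies $\cS$ with $\cC_0$, then extends by declaring $1\in\cC$ homogeneous of trivial degree (this is permitted since $\cS$ generates $\cC$ as an algebra).

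Next I would verify preservation of the four features. Two $G$-gradings are \emph{isomorphic} exactly when their associated scheme morphisms are conjugate by an element of $\AAut(\cA)(\FF)$, and this property is preserved by any scheme isomorphism. Two $G$-gradings are \emph{equivalent} iff some twist by an automorphism of $G$ makes them isomorphic; since such twists act on $G^D$ and commute with the scheme isomorphism on the target, equivalence is preserved as well. The \emph{universal group} of $\Gamma$ is characterized by the universal factorization property of the morphism $G^D\to\AAut(\cA)$ and therefore depends only on this morphism. Finally, the \emph{Weyl group} $W(\Gamma)$ is the quotient $\Stab(\Gamma)/\DDiag(\Gamma)$ of the stabilizer of the decomposition by its pointwise stabilizer (the image of the Cartier dual diagonal subgroup), and both subgroups sit inside $\AAut(\cA)(\FF)$ and are intrinsic to the scheme morphism; hence they correspond under the isomorphism.

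The hard part of the corollary has already been paid for in Theorem~\ref{smirnovschemetheorem}; once that scheme isomorphism is available, every one of the four invariants (universal group, equivalence class, isomorphism class, Weyl group) is visibly encoded by the morphism $G^D\to\AAut(\cA)$, so the transport is automatic. The only points requiring attention are that one works on the $\smir$ side with $\AAut(\smir,-)$ rather than $\AAut(\smir)$, and that the automatic abelianness of the universal group (established just before Theorem~\ref{smirnovschemetheorem}) ensures there is no loss of generality in restricting to abelian groups throughout.
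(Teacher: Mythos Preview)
Your proposal is correct and follows essentially the same approach as the paper: the paper's proof is the one-line ``This is consequence of Theorem~\ref{smirnovschemetheorem} and \cite[Theorems~1.38 and~1.39]{EKmon},'' and those two cited theorems package exactly the transfer principle you spell out (that an isomorphism of automorphism group schemes carries gradings to gradings while preserving universal groups, equivalence, isomorphism, and Weyl groups). Your explicit description of the bijection on gradings also matches the Example the paper gives immediately after the corollary.
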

\begin{proof}
This is consequence of Theorem~\ref{smirnovschemetheorem} and \cite[Theorems 1.38 and 1.39]{EKmon}.
\end{proof}

\bigskip

We will now describe more explicitly how to construct the gradings on the Smirnov algebra with both constructions of the Smirnov algebra.

\begin{example} Let $\Gamma$ be a $G$-grading on $\cC$ with degree map $\deg_\cC$. Then $\FF 1$ and the skew subspace $\cS$ of $\cC$ are graded. We can identify the skew subspaces of $\cC$ and $\smir$. We claim that we have a $G$-grading $\widetilde{\Gamma}$ on $\smir$ with degree map $\deg$ determined by
\begin{equation*}
\deg(1_\smir) = 0, \quad  \deg(s) = \deg_\cC(s), \quad \deg(s_1 \times s_2) = \deg_\cC(s_1) + \deg_\cC(s_2)
\end{equation*}
for any homogeneous elements $s, s_1, s_2 \in \cS$ in $\Gamma$. This follows from the definition of the product of $\smir$ and the fact that the bilinear form $n$ of $\cC$ is graded for any grading on $\cC$ (see \cite[Eq.(4.10)]{EKmon}).

\smallskip

In order to construct the equivalent grading with the second construction $\smirCxC$ of the Smirnov algebra it suffices to apply the isomorphism \eqref{isomSmirnovConstructions} to the grading $\widetilde{\Gamma}$ given above. The degree map is now determined by
\begin{equation*}
\deg(s \otimes 1 + 1 \otimes s) = \deg_\cC(s), \;
\deg(s_1 \otimes s_2 + s_2 \otimes s_1 - n(s_1,s_2)1 \otimes 1) = \deg_\cC(s_1) + \deg_\cC(s_2)
\end{equation*}
for $s,s_1,s_2\in\cS$ homogeneous in $\Gamma$.
\end{example}

\begin{theorem}
If the base field $\FF$ is algebraically closed of characteristic different from $2$, then there are exactly two fine involution preserving gradings, up to equivalence, on the Smirnov algebra. These have universal groups $\ZZ^2$ and $(\ZZ/2)^3$.
\end{theorem}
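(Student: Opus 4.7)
The plan is to invoke Corollary~\ref{clasif-grads-smirnov} directly, reducing the problem to the already-known classification of fine gradings on the Cayley algebra. By the standing convention at the start of Section~\ref{section.smirnov}, every grading on $\smir$ is understood as an involution preserving grading, so the statement to prove concerns exactly the set of gradings handled by that corollary.

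First I would recall from Section~\ref{subsection.Hurwitz} (specifically \cite[Corollary 4.14]{EKmon}) that over an algebraically closed field of characteristic different from $2$, the Cayley algebra $\cC$ admits, up to equivalence, exactly two fine gradings: the Cartan grading $\Gamma^1_{\cC}$ with universal group $\Z^2$, and the Cayley--Dickson doubling grading $\Gamma^2_{\cC}$ with universal group $(\Z/2)^3$. Since $\AAut(\cC) = \AAut(\cC,-)$ (the standard involution is $x\mapsto n(x,1)1-x$, hence fixed by every automorphism), gradings on $\cC$ coincide with involution preserving gradings on $(\cC,-)$.

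Next I would transport this classification via Corollary~\ref{clasif-grads-smirnov}: the scheme isomorphism $\AAut(\cC)\simeq\AAut(\smir,-)$ from Theorem~\ref{smirnovschemetheorem} induces a bijection between the gradings on $(\cC,-)$ and the gradings on $(\smir,-)$ that preserves universal groups and equivalence classes. Because refinement is also preserved under such a bijection (a proper refinement on one side corresponds to a proper refinement on the other, as both are read off from the corresponding morphisms of group schemes), fineness is preserved as well. Hence the two fine gradings on $\cC$ yield exactly two fine involution preserving gradings on $\smir$, with the same universal groups $\Z^2$ and $(\Z/2)^3$. These two gradings on $\smir$ are not equivalent to each other because their universal groups are non-isomorphic.

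There is no genuine obstacle left: the hard work was done in Theorem~\ref{smirnovschemetheorem}, and once the scheme isomorphism is available the counting is automatic. The only point that merits explicit mention is the passage from gradings to involution preserving gradings on the $\smir$ side, which is legitimate precisely because of the standing convention in Section~\ref{section.smirnov} together with the equality $\AAut(\smir)(R)\supseteq\AAut(\smir,-)(R)$ that the corollary operates within.
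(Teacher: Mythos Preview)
Your proposal is correct and follows essentially the same approach as the paper's own proof, which simply reads: ``This follows from the classification of fine gradings on $\cC$ and Corollary~\ref{clasif-grads-smirnov}.'' You have merely spelled out the details (why fineness transfers, why the two resulting gradings are inequivalent) that the paper leaves implicit.
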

\begin{proof}
This follows from the classification of fine gradings on $\cC$ and Corollary~\ref{clasif-grads-smirnov}.
\end{proof}

%%%%%%%%%%%%%%%%%%%%%%%%%%%%%%%%%%%%%%%%%%%%%%%%%%%%%%%%%%%%
%%                                                        %%
%%          GRADUACIONES EN ALG DE LIE ASOCIADAS          %%
%%                                                        %%
%%%%%%%%%%%%%%%%%%%%%%%%%%%%%%%%%%%%%%%%%%%%%%%%%%%%%%%%%%%%
\section{Induced gradings} \label{section.induced}

As an application of our results above, we can use several constructions to induce gradings from structurable algebras to Lie algebras. We will now recall several of these constructions.

\medskip
%% KANTOR CONSTRUCTION:

We will first recall the $5$-graded Lie algebra obtained with the Kantor construction from a structurable algebra (\cite[Theorem 3]{All79}), or more generally, from a Kantor pair (\cite[\S3--4]{AF99}). The Kantor construction generalizes the Tits-Kantor-Koecher (TKK) construction for Jordan systems.

\begin{df}
A {\em Kantor pair} (or {\em generalized Jordan pair of second order} \cite{F94, AF99}) is a pair of vector spaces $\cV = (\cV^+, \cV^-)$ and a pair of trilinear products $\cV^\sigma\times\cV^{-\sigma}\times\cV^\sigma\to\cV^\sigma$ (with $\sigma\in\{+, -\}$), denoted by $\{x,y,z\}^\sigma$, satisfying the identities:
\begin{align}
& [V^\sigma_{x,y}, V^\sigma_{z,w}] = V^\sigma_{V^\sigma_{x,y}z,w} - V^\sigma_{z,V^{-\sigma}_{y,x}w}, \\
& K^\sigma_{K^\sigma_{x,y}z,w} = K^\sigma_{x,y}V^{-\sigma}_{z,w} + V^\sigma_{w,z}K^\sigma_{x,y},
\end{align}
where $V^\sigma_{x,y}z = U^\sigma_{x,z}(y):= \{x,y,z\}^\sigma$, $U^\sigma_x := U^\sigma_{x,x}$ and $K^\sigma_{x,y}z = K^\sigma(x,y)z := \{x,z,y\}^\sigma - \{y,z,x\}^\sigma$. The map $V^\sigma_{x,y}$ is also denoted by $D^\sigma_{x,y}$ or $D^\sigma(x,y)$ (because $(V^+_{x,y},-V^-_{y,x})$ is a derivation of the Kantor pair). Recall that if $(\cA, -)$ is a structurable algebra, then $(\cA, \cA)$, with two copies of the triple product of $\cA$, defines a Kantor pair.
\end{df}

Consider the vector space
\begin{equation}
\kan(\cV) \coloneqq \kan(\cV)^{-2} \oplus \kan(\cV)^{-1} \oplus \kan(\cV)^{0} \oplus \kan(\cV)^{1} \oplus \kan(\cV)^{2},
\end{equation}
where
\begin{align*}
& \kan(\cV)^{-2} = \lspan\left\{\left(\begin{matrix} 0 & K(\cV^-,\cV^-) \\ 0 & 0 \end{matrix}\right)\right\}, \quad
\kan(\cV)^{-1} = \left(\begin{matrix} \cV^- \\ 0 \end{matrix}\right), \\
& \kan(\cV)^{0} = \lspan\left\{\left(\begin{matrix} D(x^-,x^+) & 0 \\ 0 & -D(x^+,x^-) \end{matrix}\right)
  \med x^\sigma\in\cV^\sigma \right\}, \\
& \kan(\cV)^{1} = \left(\begin{matrix} 0 \\ \cV^+ \end{matrix}\right), \quad
\kan(\cV)^{2} = \lspan\left\{\left(\begin{matrix} 0 & 0 \\ K(\cV^+,\cV^+) & 0 \end{matrix}\right)\right\}.
\end{align*}
Then, the vector space
\begin{align*}
\mathfrak{S}(\cV) \coloneqq& \kan(\cV)^{-2} \oplus \kan(\cV)^0 \oplus \kan(\cV)^{2} \\
=& \lspan\left\{\left(\begin{matrix} D(x^-,x^+) & K(y^-,z^-) \\ K(y^+,z^+) & -D(x^+,x^-) \end{matrix}\right)
  \med x^\sigma,y^\sigma,z^\sigma\in\cV^\sigma \right\}
\end{align*}
is a subalgebra of the Lie algebra $\End\begin{pmatrix} \cV^- \\ \cV^+ \end{pmatrix}
= \begin{pmatrix} \End(\cV^-) & \Hom(\cV^+,\cV^-) \\ \Hom(\cV^-,\cV^+) & \End(\cV^+) \end{pmatrix}$,
with the commutator product. The product of $\kan(\cV)$ is defined by
\begin{align*}
& [A,B] = AB-BA, \qquad
[A, \begin{pmatrix} x^- \\ x^+ \end{pmatrix} ] = A \begin{pmatrix} x^- \\ x^+ \end{pmatrix}, \\
& [\begin{pmatrix} x^- \\ x^+ \end{pmatrix}, \begin{pmatrix} y^- \\ y^+ \end{pmatrix}] =
\left(\begin{matrix} D(x^-,y^+)-D(y^-,x^+) & K(x^-,y^-) \\ K(x^+,y^+) & -D(y^+,x^-)+D(x^+,y^-) \end{matrix}\right)
\end{align*}
for $A, B\in \mathfrak{S}(\cV)$ and $x^\sigma, y^\sigma \in \cV^\sigma$ for $\sigma = \pm$.

Then, $\kan(\cV)$ becomes a Lie algebra, called the \emph{Kantor Lie algebra} of $\cV$. The $5$-grading is a $\ZZ$-grading which is called the \emph{standard grading} of $\kan(\cV)$; we will also refer to it as the \emph{main grading} of $\kan(\cV)$. The subspaces $\kan(\cV)^1$ and $\kan(\cV)^{-1}$ are usually identified with $\cV^+$ and $\cV^-$, respectively. The Kantor construction of a structurable algebra is defined as the Kantor construction of the associated Kantor pair.

Let $\cA$ be a structurable algebra. Recall that $\nu(x^-,x^+)\coloneqq(D_{x^-,x^+},-D_{x^+,x^-})$ is a derivation called \emph{inner derivation} associated to $(x^-,x^+)\in\cV^-\times\cV^+$. The \emph{inner structure algebra} of $\cA$ is the Lie algebra $\mathfrak{innstr}(\cA)=\lspan\{\nu(x,y)\med x,y\in\cA\}$. Let $L_x$ denote the left multiplication by $x\in\cA$ and write $\cS=\cS(\cA)$. Then, the map $\cS\to L_\cS$, $s\mapsto L_s$, is a linear monomorphism, so we can identify $\cS$ with $L_\cS$. Also, note that the map $\cA\times\cA\to\cS$ given by $\psi(x,y)\coloneqq x\bar y-y\bar x$ is an epimorphism (because $\psi(s,1)=2s$ for $s\in\cS$). By \cite[(1.3)]{AF84}, we have the identity $L_{\psi(x,y)}=U_{x,y}-U_{y,x}=K(x,y)$ for all $x,y\in\cA$. As a consequence of this, in the Kantor construction of $\cV$ we can identify the subspaces $\kan(\cV)^{\sigma2}$ with $L_\cS$, and also with $\cS$. Then, the main grading of $\kan(\cA)$ can be written as follows:
\begin{equation}
\kan(\cA) = \cS^- \oplus \cA^- \oplus \mathfrak{innstr}(\cA) \oplus \cA^+ \oplus \cS^+.
\end{equation}

\bigskip
 
Let $\Gamma$ be a $G$-grading on $(\cA,-)$ (i.e., an involution preserving $G$-grading on $\cA$). Note that $\Gamma$ induces a $G$-grading on $\Der(\cA,-)$ and also on $\mathfrak{innstr}(\cA)$. Moreover, $\Gamma$ extends to a $\Z\times G$-grading on $\kan(\cA)$ by means of
\begin{equation}
\dg s^\pm= (\pm 2,\dg_{\Gamma}s),
\quad \dg a^\pm =(\pm 1,\dg_{\Gamma} a),
\quad \dg f= (0, \dg_{\Gamma}f)
\end{equation}
for homogeneous elements $s^\pm\in \cS^\pm$, $a^\pm\in \cA^\pm$ and $f\in \mathfrak{innstr}(\cA)$, where $\dg_{\Gamma}a$ and $\dg_{\Gamma}s$ denote the degrees in $\Gamma$ and $\dg_{\Gamma}f$ the induced degree in $\mathfrak{innstr}(\cA)$.

\bigskip

Recall that
$\kan(\smir) = \mathfrak{e}_7$,
$\kan(\cC\otimes\cC) = \mathfrak{e}_8$,
$\kan(\cC\otimes\cH) = \mathfrak{e}_7$,
$\kan(\cC\otimes\cK) = \mathfrak{e}_6$, and
$\kan(\cC\otimes\FF) = \mathfrak{f}_4$.

The fine gradings on $\smir$, by $\ZZ^2$ and $(\ZZ/2)^3$ respectively, induce a $\ZZ^3$-grading and a $\ZZ \times (\ZZ/2)^3$-grading on $\mathfrak{e}_7$.

The fine gradings on $\cC\otimes\cC$ induce gradings on $\mathfrak{e}_8$ by the groups:
$\Z \times \Z^4$, $\Z^3 \times (\Z/2)^3$, $\Z \times (\Z/2)^6$, $\Z^3\times\Z/2$, $\Z\times\bigl(\Z/2\bigr)^4$ and $\Z\times \Z/4\times(\Z/2)^2$.

The fine gradings on $\cC\otimes\cH$ induce gradings on $\mathfrak{e}_7$ by the groups:
$\Z\times(\Z/2)^5$,  $\Z^2\times(\Z/2)^3$,  $\Z^3\times(\Z/2)^2$ and $\Z^4$.

The fine gradings on $\cC\otimes\cK$ induce gradings on $\mathfrak{e}_6$ by the groups:
$\Z\times(\Z/2)^4$ and $\Z^3\times\Z/2$.

The fine gradings on $\cC\otimes\FF$ induce gradings on $\mathfrak{f}_4$ by the groups: $\ZZ^3$ and $\ZZ \times (\ZZ/2)^3$.

\bigskip
%% SMIRNOV CONSTRUCTION:

Recall now from \cite{AF93a} the construction of the {\em Steinberg unitary Lie algebra} $\mathfrak{stu}_3(\cA, -)$ for a unitary nonassociative algebra with involution $(\cA, -)$, which is generated by the symbols $u_{ij}(x)$ for $1\leq i\neq j \leq 3$, $x\in\cA$, subject to the relations
\begin{align*}
& u_{ij}(x) = u_{ji}(-\bar x), \\
& x \mapsto u_{ij}(x) \quad \text{is linear}, \\
& [u_{ij}(x), u_{jk}(y)] = u_{ik}(xy) \quad \text{for distinct $i, j, k$.}
\end{align*}
Also, by \cite[Lemma~1.1]{AF93a},
$$ \mathfrak{stu}_3(\cA, -) = \mathfrak{s} \oplus u_{12}(\cA) \oplus u_{23}(\cA) \oplus u_{31}(\cA),  $$
where $\mathfrak{s} = \sum_{i<j}[u_{ij}(\cA), u_{ij}(\cA)]$, and this decomposition defines a $(\ZZ/2)^2$-grading on $\mathfrak{stu}_3(\cA, -)$. Moreover, any $G$-grading on $\cA$ induces a $(\ZZ/2)^2 \times G$-grading on $\mathfrak{stu}_3(\cA, -)$.

\smallskip

Note that the next result appears in the literature (\cite[Section~6]{AEK14}) for a different structurable algebra known as the Brown algebra, and we will use the same arguments in our proof:
\begin{proposition}
Assume that $\chr(\FF)\neq 2,3,5$. Let $\cA$ be one of the structurable algebras $\cC\otimes H$, with $H$ a Hurwitz algebra, or $\smir$. Then $\kan(\cA, -)$ and $\mathfrak{stu}_3(\cA, -)$ are isomorphic.
\end{proposition}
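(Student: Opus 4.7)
The plan is to adapt the argument used for the Brown algebra in \cite[Section~6]{AEK14} to the structurable algebras at hand. The key step is to construct an explicit Lie algebra homomorphism
\[ \phi \colon \mathfrak{stu}_3(\cA,-) \longrightarrow \kan(\cA,-), \]
to show that it is surjective, and then to verify injectivity.

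First, I would build $\phi$ by identifying in $\kan(\cA,-)$ three distinguished copies of $\cA$ on which the defining relations of $\mathfrak{stu}_3$ hold. Two of these copies, $\cA^+$ and $\cA^-$, sit naturally as the degrees $+1$ and $-1$ of the 5-grading of $\kan(\cA,-)$; a third copy is produced by combining $\cS^+$, $\cS^-$ and suitable inner derivations via the Kantor bracket, yielding three linear maps $\cA \to \kan(\cA,-)$ that play the roles of $u_{12}$, $u_{23}$, $u_{31}$. A direct computation with the Kantor bracket shows that these three maps satisfy $u_{ij}(x)=u_{ji}(-\bar x)$, linearity, and $[u_{ij}(x),u_{jk}(y)]=u_{ik}(xy)$ for distinct $i,j,k$, so by the universal property of $\mathfrak{stu}_3(\cA,-)$ one obtains the desired $\phi$.

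Surjectivity of $\phi$ follows easily: the image contains $\cA^\pm$, hence also $\mathfrak{innstr}(\cA)$ via the commutators $[\cA^+,\cA^-]$, and the outermost layers $\cS^\pm$ of the 5-grading are reached through brackets inside the image of $u_{ij}(\cA)$.

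The main obstacle is injectivity. The kernel of $\phi$ is a graded ideal of $\mathfrak{stu}_3(\cA,-)$ contained in $\mathfrak{s}$, and it is central (this is the standard fact that Steinberg unitary Lie algebras are coverings of the associated Kantor Lie algebras, in the Allison--Faulkner setting). To eliminate this central kernel, I would invoke the list preceding the proposition, which identifies $\kan(\cA,-)$, for each of our $\cA$, with one of the centrally closed exceptional simple Lie algebras $\mathfrak{f}_4$, $\mathfrak{e}_6$, $\mathfrak{e}_7$ or $\mathfrak{e}_8$; since each of these coincides with its own universal central extension, the central ideal $\ker\phi$ must vanish. Equivalently, a dimension count based on the decomposition $\mathfrak{stu}_3(\cA,-) = \mathfrak{s} \oplus u_{12}(\cA) \oplus u_{23}(\cA) \oplus u_{31}(\cA)$ matches the known dimension of the corresponding exceptional Lie algebra, forcing $\phi$ to be an isomorphism.
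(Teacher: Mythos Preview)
Your proposal is correct and follows essentially the same route as the paper. The paper does not construct the surjection $\mathfrak{stu}_3(\cA,-)\twoheadrightarrow\kan(\cA,-)$ by hand but simply cites \cite{AF93a} and \cite{EO07} for the fact that $\mathfrak{stu}_3(\cA,-)$ modulo its center is isomorphic to $\kan(\cA,-)$; then, exactly as you do, it kills the central kernel by observing that $\kan(\cA,-)$ admits no nontrivial central extensions (the paper phrases this via nondegeneracy of the Killing form when $\chr\FF\neq 2,3,5$, which is the same statement as your ``centrally closed'').

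One small caveat: your alternative ``dimension count'' is not an independent argument. The decomposition $\mathfrak{stu}_3(\cA,-)=\mathfrak{s}\oplus u_{12}(\cA)\oplus u_{23}(\cA)\oplus u_{31}(\cA)$ does not give you $\dim\mathfrak{s}$ a priori, so you cannot match dimensions without first knowing that the center is trivial. Stick with the central-extension argument.
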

\begin{proof}
Note that $\kan(\cA, -)$ is simple in all cases. There is an isomorphism between the quotient of $\mathfrak{stu}_3(\cA, -)$ by its center and $\kan(\cA, -)$ (see \cite{AF93a}, \cite{EO07}).  Since $\chr(\FF)\neq 2,3,5$, the Killing form of $\kan(\cA, -)$ is nondegenerate, so it has no nontrivial central extensions. Hence, the center of $\mathfrak{stu}_3(\cA, -)$ is trivial and the result follows.
\end{proof}

Finally, we give a list of the gradings on $\mathfrak{stu}_3(\cA, -)$ that are induced by the fine gradings on the algebras considered in this paper.

The fine gradings on $\smir$ induce gradings on $\mathfrak{e}_7$ by the groups: $\ZZ^2 \times (\ZZ/2)^2$ and $ (\ZZ/2)^5$.

The fine gradings on $\cC\otimes\cC$ induce gradings on $\mathfrak{e}_8$ by the groups: $\Z^4 \times (\ZZ/2)^2$, $\Z^2 \times (\Z/2)^5$, $ (\Z/2)^8$, $\Z^2\times (\ZZ/2)^3$, $\bigl(\Z/2\bigr)^6$ and $ \Z/4\times(\Z/2)^4$.

The fine gradings on $\cC\otimes\cH$ induce gradings on $\mathfrak{e}_7$ by the groups: $(\Z/2)^7$,  $\Z\times(\Z/2)^5$,  $\Z^2\times(\Z/2)^4$ and $\Z^3\times(\Z/2)^2$.

The fine gradings on $\cC\otimes\cK$ induce gradings on $\mathfrak{e}_6$ by the groups: $(\Z/2)^6$ and $\Z^2\times (\ZZ/2)^3$.

The fine gradings on $\cC\otimes\FF$ induce gradings on $\mathfrak{f}_4$ by the groups: $\ZZ^2 \times (\ZZ/2)^2$ and $ (\ZZ/2)^5$.

\bigskip

%% Acknowledgements
\textbf{Acknowledgements} Both authors are very grateful to Alberto Elduque for supervision of this work.
Thanks are also due to the anonymous referee, for his corrections and suggestions.
Note that a big part of this paper is part of the PhD Thesis of Alejandra S. C\'ordova-Mart\'inez.

%%%%%%%%%%%%%%%%%%%%%%%%%%%%%%%%%%%%%%%%%%%%%%%%%%%%%%%%%%%%%%%%% Referencias

%%%%%%%%%%%%%%%%%%%%%%%%%%%%%%%%%%%%%%%%%%%%%%%%%%%%%%%%%%%%%%%%%%%%%%%%%%%%%%%%%%
%%%%%%%%%%%%%%%%%%%%%%%%%%%%%%%%%%%%%%%%%%%%%%%%%%%%%%%%%%%%%%%%%%%%%%%%%%% FIN
\end{document}